\theoremstyle{plain}
\newtheorem{theorem}{Theorem}[section]
\newtheorem{lemma}[theorem]{Lemma}
\newtheorem{corollary}[theorem]{Corollary}
\newtheorem{proposition}[theorem]{Proposition}
\newtheorem{assumption}[theorem]{Assumption}
\theoremstyle{remark}
\newtheorem{remark}[theorem]{Remark}
\newtheorem{example}[theorem]{Example}
\numberwithin{equation}{section}
\newcommand{\C}{\mathbb{C}}
\newcommand{\R}{\mathbb{R}}
\newcommand{\Z}{\mathbb{Z}}
\renewcommand{\Im}{\operatorname{Im}}
\renewcommand{\Re}{\operatorname{Re}}
\newcommand{\I}{\infty}
\newcommand{\norm}[1]{\left\lVert #1\right\rVert}
\newcommand{\tnorm}[1]{\lVert #1\rVert}
\newcommand{\IN}{\quad\text{in }}
\def\({\left(}
\def\){\right)}
\def\<{\left\langle}
\def\>{\right\rangle}
\def\le{\leqslant}
\def\ge{\geqslant}
\def \l{\lambda}
\def \pa{\partial}
\def \a{\alpha}
\newcommand{\eps}{\varepsilon}
\newcommand{\la}{\lambda}
\newcommand{\pt}{\partial}
\DeclareMathOperator{\sign}{sign}
\DeclareMathOperator{\supp}{supp}
\DeclareMathOperator{\rank}{rank}
\DeclareMathOperator{\tr}{tr}
\DeclareMathOperator{\Span}{span}
\newcommand{\todayd}{\the\year/\the\month/\the\day}
\theoremstyle{definition}
\newcommand{\ol}{\overline}
\begin{document}
\title[cubic nonlinear Klein-Gordon system]
{On asymptotic behavior of solutions to cubic nonlinear Klein-Gordon systems in one space dimension}

\author{Satoshi Masaki}
\address{Department of systems innovation, 
Graduate school of Engineering Science, 
Osaka University, Toyonaka Osaka, 560-8531, Japan}
\email{masaki@sigmath.es.osaka-u.ac.jp}

\author{Jun-ichi Segata}
\address{Faculty of Mathematics, Kyushu University, 
Fukuoka, 819-0395, Japan}
\email{segata@math.kyushu-u.ac.jp}

\author{Kota Uriya}
\address{Department of Applied Mathematics, Faculty of Science, 
Okayama University of Science, Okayama, 700-0005, Japan}
\email{uriya@xmath.ous.ac.jp}

\keywords{Nonlinear Klein-Gordon equation, Asymptotic behavior of solutions, Long-range scattering, Normalization of systems, Matrix representation}
\subjclass[2000]{Primary 35L71, Secondary 35A22, 35B40}

\begin{abstract}
In this paper, we consider the large time asymptotic behavior of solutions to systems of two cubic nonlinear Klein-Gordon equations in one space dimension.
We classify the systems by studying the quotient set of a suitable subset of systems by the equivalence relation naturally induced by the linear transformation of the unknowns.
It is revealed that the equivalence relation is well described by an identification with a matrix.
In particular, we characterize some known systems in terms of the matrix and specify all systems equivalent to them.
An explicit reduction procedure from a given system in the suitable subset to a model system, i.e., to a representative, is also established.
The classification also draws our attention to
 some model systems which admit solutions with a new kind of asymptotic behavior. Especially, we find new systems which admit a solution of which decay rate is worse than that of a solution to the linear Klein-Gordon equation by logarithmic order.
\end{abstract}

\maketitle

\section{Introduction}
The paper is devoted to the study of the large time behavior of small solutions to the following system of the cubic nonlinear Klein-Gordon equations
in one space dimension:
\begin{equation}\label{E:sys}
\left\{
\begin{aligned}
&(\square + 1)u_1 
= \lambda_1u_1^3 + \l_2u_1^2u_2 + \l_3u_1u_2^2 + \l_4u_2^3, \\
&(\square + 1)u_2 
= \lambda_5u_1^3 + \l_6u_1^2u_2 + \l_7u_1u_2^2 + \l_8u_2^3, 
\end{aligned}
\right.
\end{equation}
where $u_j: \R\times \R \to \R$ ($j=1,2$) are real-valued unknowns, $\square = \partial_t^2 - \partial_x^2$ is
the d'Alembertian, $\l_1,\dots, \l_8$  are real constants.
We consider the system with the initial condition
\begin{equation}\label{E:IC}
	(u_1(0),\partial_t u_1(0),u_2(0),\partial_t u_2(0)) = (\eps u_{1,0}, \eps u_{1,1},\eps u_{2,0},\eps u_{2,1}),
\end{equation}
where $u_{1,0},u_{1,1},u_{2,0},u_{2,1}$ are given data, and $\eps>0$ is a small parameter. 
The aim of this paper is to investigate the large time asymptotic behavior of solutions to the system \eqref{E:sys} with \eqref{E:IC}.
It is known that the cubic nonlinearities are critical in one dimension with respect to asymptotic behavior 
of the solution (\cites{GeoL,GY,Gl,HN08JDE,HN09CCM,Kl,Matsu,Sh}).
We expect that a combination of the coefficients of the nonlinearities in \eqref{E:sys}
decides the type of the large time behavior of the solutions to \eqref{E:sys}.
We are interested in classifying the system \eqref{E:sys} from the view point of the large time behavior 
of the solution.

\smallbreak

There exist several particular combinations of the coefficients with which the large time asymptotic behavior of solutions to \eqref{E:sys} is known.
When $(\l_1, \l_8) \neq (0,0)$ and $\l_2 = \cdots = \l_7 = 0$, we have the decoupled system  
\begin{equation}\label{E:sysdecouple}
\left\{
\begin{aligned}
&(\square + 1)u_1 
= \lambda_1u_1^3 , \\
&(\square + 1)u_2 
= \l_8u_2^3.
\end{aligned}
\right.
\end{equation}
In this case, the system is merely an alignment of two independent single equations.
Hence, we are able to apply the result due to Delort~\cite{Del} for the single real-valued cubic nonlinear Klein-Gordon equation
\begin{equation}\label{E:cNLKG}
	(\square + 1) u = \pm u^3
\end{equation}
and we see that there exist functions 
$\Phi_1$, $\Phi_2$, $\Psi_1=\Psi_1(\Phi_1)$, and $\Psi_2=\Psi_2(\Phi_2)$ such that
\[
	u_j(t) = \frac{2}{t^{1/2}}\Re\left[
	\Phi_j\left(\frac{x}{t}\right)
	\exp\left(i\sqrt{t^2-|x|^2}+i\Psi_j\left(\frac{x}{t}\right)\log t\right)
	\right] + O( t^{-\frac{3}{2}+\delta})
\]
for $j= 1,2$ as $t \to \infty$ provided $\eps \lesssim_\delta 1$, where $\delta>0$ is an arbitrary small constant
(see also \cite{HN08ZAMP,LS,Sti}).
It is worth mentioning that the above asymptotic behavior contains 
a logarithmic phase correction due to the long-range effect of the cubic nonlinearity.
The appearance of logarithmic phase correction is a typical phenomenon in the critical dispersive equations (see, e.g., \cite{Oz}).

In the case where $\l_2=\l_4=\l_5=\l_7=0$ and $\l_1=\l_3=\l_6=\l_8=\l\neq 0$, 
we have the system
\begin{equation}\label{E:syscomplex}
\left\{
\begin{aligned}
&(\square + 1)u_1 
= \lambda (u_1^2 + u_2^2)u_1 , \\
&(\square + 1)u_2 
= \l (u_1^2 + u_2^2)u_2.
\end{aligned}
\right.
\end{equation}
This is another known case where the asymptotic behavior of the solution involves logarithmic phase correction. 
One sees that $w = u_1 + i u_2$ satisfies a single complex-valued cubic
nonlinear Klein-Gordon equation
\begin{equation}\label{E:gcNLKG}
	(\square + 1) w = \l |w|^2 w.
\end{equation}
The second author~\cite{S} shows that there exist functions 
$\Phi_+, \Phi_-$, and $ \Psi_\pm = \Psi_\pm (\Phi_+,\Phi_-)$ such that
\begin{align*}
	u(t) &= \frac{2}{t^{1/2}}\left[
	\Phi_+\left(\frac{x}{t}\right)
	\exp\left(i\sqrt{t^2-|x|^2}+i\Psi_+\left(\frac{x}{t}\right)\log t\right)
	\right] \\
	&\quad+ \frac{2}{t^{1/2}}\left[
	\Phi_-\left(\frac{x}{t}\right)
	\exp\left(-i\sqrt{t^2-|x|^2}+i\Psi_-\left(\frac{x}{t}\right)\log t\right)
	\right]
	+ O( t^{-\frac{3}{2}+\delta})
\end{align*}
as $t \to \infty$. 
A difference between this behavior and that for \eqref{E:sysdecouple} is that the phase correction term $\Psi_\pm$ depends on both $\Phi_+$ and $\Phi_-$,
which reflects the presence of an interaction between two components of the system.
See also \cite{MS2,MS3,MSU} for the two- and three-dimensional gauge invariant critical equation of the form \eqref{E:gcNLKG}.

There exist other kinds of systems.
One such example is the case $\l_j = \delta_{j5}$, where $\delta_{jk}$ is the Kronecker delta:
\begin{equation}\label{E:sysSunagawa}
\left\{
\begin{aligned}
&(\square + 1)u_1 
= 0, \\
&(\square + 1)u_2 
= u_1^3.
\end{aligned}
\right.
\end{equation}
As for the system,
Sunagawa~\cite{Su3} showed that there exist functions 
$\Phi_1, \Phi_2, \tilde{\Psi}=\tilde{\Psi} (\Phi_1)$ such that
\begin{align*} 
\notag u_1(t,x) =& \frac{2}{t^{1/2}}
\Re\Bigg[\Phi_1\left(\frac{x}{t}\right)
\exp\left(i\sqrt{t^2-|x|^2}
\right)
\Bigg]+ O\left( t^{-\frac{3}{2}+\delta }\right),\\
\notag u_2(t,x) =&
\frac{2}{t^{1/2}}
\Re\Bigg[\left\{\tilde{\Psi}\left(\frac{x}{t}\right)\log \sqrt{t^2-|x|^2}+\Phi_2\left(\frac{x}{t}\right)\right\}
\exp\left(i\sqrt{t^2-|x|^2}
\right)
\Bigg]+ O\left( t^{-\frac{3}{2}+\delta }\right)
\end{align*}
as $t \to \infty$. 
Remark that 
the time decay rate of $u_2$ is slow due to the appearance of a \emph{logarithmic amplitude correction}.

In \eqref{E:sys}, the two equation has the same linear part. 
It is known that 
if the linear parts have different masses (i.e., if they are $\square + m_j^2$ $(j=1,2)$, respectively, with $m_1 \neq m_2$), then 
certain resonance phenomenon may appear. The phenomenon is called a \emph{mass resonance phenomenon}.  
We refer the readers to \cite{Su03,Tsu,KS,KOS} for studies in this direction. 
When the nonlinearities involve derivatives of unknowns, 
a dissipative effect appears under some structural condition on the nonlinearity. 
See \cite{KimS,Su2} for the precise condition. 
In \cite{MSug}, the first author and Sugiyama showed that
the dissipative decay rate is optimal in a class of systems.
In particular, for any choice of coefficients of \eqref{E:sys},
there is no nontrivial classical solution which satisfies
\[
	\sum_{j=1}^2 \(\norm{u_j(t)}_{L^\I_x(\R)} + \norm{\pt_x u_j(t)}_{L^\I_x(\R)} + \norm{\pt_t u_j(t)}_{L^\I_x(\R)}\) = o(t^{-\frac12} (\log t)^{-\frac12})
\]
as $t\to\I$.

\subsection{Classification of systems}

\subsubsection{The equivalence relation on systems induced by the linear transformation}
The system \eqref{E:sys} is closed under the linear transformation of unknowns. 
Namely,
if we define a new unknown $(v_1,v_2)$ by the linear transformation
\begin{equation}\label{E:linearchange}
	\begin{pmatrix} v_1 \\ v_2 \end{pmatrix} = M\begin{pmatrix} u_1 \\ u_2 \end{pmatrix}, \quad M \in GL_2(\R)
\end{equation}
then
$(v_1,v_2)$ also solves a system of the form \eqref{E:sys},
where $M$ is an invertible $2\times2$ matrix. 

Thanks to this fact, the above known results are applicable to several more systems.
For instance, the system
\[
	\left\{
\begin{aligned}
&(\square + 1)u_1 
= (u_1^2+ 3u_2^2)u_1 , \\
&(\square + 1)u_2 
= (3u_1^2 +u_2^2)u_2
\end{aligned}
\right.
\]
is reduced to \eqref{E:sysdecouple}
by letting $v_1=u_1+u_2$ and $v_2=u_1-u_2$.
Since we know the large time behavior of $(v_1,v_2)$, that of $(u_1,u_2)$ is easily deduced 
by the inverse transform $u_1=\frac12 (v_1+v_2)$ and $u_2=\frac12 (u_1-u_2)$.
In this manner, once we know the behavior of solutions for a system, it means we have it for all systems which  are obtained by a change of variable of the form \eqref{E:linearchange}.

The linear transformation of unknowns naturally induces an equivalence relation on the set of systems of the form \eqref{E:sys}.
By looking at the equivalence classes of systems \eqref{E:sys}, we obtain a classification of the systems of the form \eqref{E:sys}.
In the paper, we classify the systems in this way.

\subsubsection{Identification of a system with a matrix}

A novelty of the present paper is to identify the system \eqref{E:sys} with a matrix.
For this purpose, we introduce notation.
Let $M_N(\R)$ be the set of $N\times N$ matrices with real entries.
Let $GL_N(\R)=\{ M \in M_N(\R) \ |\ \det M\neq0\}$ and $SL_N(\R)=\{ M \in M_N(\R) \ |\ \det M=1\}$
be the general linear group and the special linear group, respectively.
For a matrix or a vector $M$, $\ltrans{M}$ denotes its transpose.
A row vector $\begin{pmatrix}a_1 & a_2 & \cdots & a_N \end{pmatrix} \in \R^N$ is frequently denoted as $(a_1,a_2,\dots,a_N)$, for simplicity.

Let us introduce the matrix
\begin{equation}\label{E:defA}
A := 
\begin{pmatrix}
\la_2 & \la_6 - 3\la_1 & -3\la_5 \\
\la_3 & \la_7 - \la_2 & -\la_6\\
3\la_4 & 3\la_8-\la_3 & -\la_7
\end{pmatrix}
\end{equation}
defined by the coefficients of the nonlinearity. 
Note that the matrix $A$ always satisfies $\tr A = 0$. 
Let us 
define a subspace $Z$ of $M_3(\R)$ by
\begin{equation}\label{E:defZ}
	Z := \{ A \in M_3(\R) \ | \ \tr A = 0 \}.
\end{equation}
The map $(\l_1, \dots, \l_8) \mapsto A$ defined in \eqref{E:defA} is a bijection from $\R^8$ to $Z$
and we are able to identify a combination of the coefficients of the nonlinearity, or simply the system \eqref{E:sys} itself, with an element of $Z$ by the bijection.
In the sequel, we freely use the identification.

An equivalence relation on $Z$ is naturally induced by
the above equivalence relation on systems through the identification.
Namely, we say the relation $Z \ni A \sim A' \in Z$ holds
if there exists 
matrix $M\in GL_2(\R)$ such that $A$ is the system for $(u_1,u_2)$ and $A'$ is the system for $(v_1,v_2)$
defined by \eqref{E:linearchange} with $M$. 
The relation $\sim$ defines a quotient set $Z/{\sim}$.
For $A\in Z$, $[A]:=\{ A' \in Z \ |\ A' \sim A \} \subset Z$ stands for the equivalence class of $A$ in $Z$. 
The quotient set is the set of all equivalence classes, i.e, $Z/{\sim}:=\{ [A] \subset Z \ |\ A \in Z \} $.

One of the benefit of the matrix representation of a system is that 
we have a clear formula of the equivalence relation.
\begin{theorem}\label{T:classA}
For an invertible matrix 
\[
	M= \begin{pmatrix} a & b \\ c & d \end{pmatrix} \in {GL}_2 (\R),
\]
we denote
\begin{equation}\label{E:DMform}
	D= D(M) := \frac1{\det M}
	\begin{pmatrix} d^2 & -2dc & c^2 \\ -bd & ad+ bc & -ac \\ b^2 & -2ab & a^2 \end{pmatrix}
	\in {SL}_3 (\R).
\end{equation}
If $A'\in Z$ is the matrix obtained from $A \in Z$ by the transform \eqref{E:linearchange}
with $M$, then $A'$ is written as
\begin{equation}\label{E:Apform}
	A' = 	(\det {M})^{-1}   D
	A
	 D^{-1}.
\end{equation}
In particular, the equivalence class $[A] \in Z/{\sim}$ is characterized as
\[
	[A] = \left\{
	(\det {M})^{-1}   D
	A
	 D^{-1}
	\ \middle| \ 
	M \in GL_2 (\R)
	\right\}.
\]
Further, $\rank A$ is an invariant quantity under the equivalence relation, that is,
$\rank A' = \rank A$ holds for any $A' \in [A]$.
\end{theorem}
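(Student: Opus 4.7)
My plan is to proceed by direct substitution, then exploit the group-action structure to reduce the verification to a small generating set. First, I would compute the new coefficients $\la_1',\ldots,\la_8'$ explicitly: writing $M=\begin{pmatrix}a&b\\c&d\end{pmatrix}\in GL_2(\R)$, the inverse change of variables gives $u_1=(dv_1-bv_2)/\det M$ and $u_2=(-cv_1+av_2)/\det M$; applying $M$ to \eqref{E:sys} and expanding the cubic monomials yields each $\la'_j$ as a cubic form in $a,b,c,d$, linear in the old $\la_k$, with a common factor $(\det M)^{-3}$. Reading off $A'$ via the definition \eqref{E:defA} then produces an explicit linear map $\Phi_M : A\mapsto A'$ on $Z$.

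Second, I would verify that the formula $T_M(A):=(\det M)^{-1}D(M)AD(M)^{-1}$ also defines a $GL_2(\R)$-action on $Z$; this reduces to checking that $M\mapsto D(M)$ is a group homomorphism, which is the assertion that $D$ is (up to the scalar twist) the standard $\mathrm{Sym}^2$ representation of $GL_2(\R)$ on quadratic forms in two variables, a brief direct computation. Since $\Phi$ is manifestly a $GL_2$-action (composition of linear changes of variables corresponds to matrix multiplication), it then suffices to verify $\Phi_M = T_M$ on a generating set of $GL_2(\R)$, for instance the scalar matrices, the upper unipotents $\begin{pmatrix}1&t\\0&1\end{pmatrix}$, the lower unipotents $\begin{pmatrix}1&0\\s&1\end{pmatrix}$, and a single reflection. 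On each of these generators both the coefficient substitution and the $3\times 3$ conjugation simplify enormously, and the comparison becomes transparent.

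The main obstacle is precisely the check on the unipotent generators: here all eight $\la_j$'s mix nontrivially, and one must carefully track the binomial-type factors $3,-3,2$ hidden inside \eqref{E:defA}. Once the transformation formula \eqref{E:Apform} is established, the remaining claims come for free: the explicit description of $[A]$ is immediate from the definition of $\sim$, and the rank invariance follows because $D(M)AD(M)^{-1}$ is similar to $A$, hence of equal rank, and multiplication by the nonzero scalar $(\det M)^{-1}$ preserves rank.
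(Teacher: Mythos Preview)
Your proposal is correct and follows essentially the same strategy as the paper: verify that $M\mapsto D(M)$ is a group homomorphism, then check \eqref{E:Apform} on a generating set of $GL_2(\R)$ and deduce the general case by composition. The only cosmetic difference is the choice of generators---the paper uses the diagonal matrices $\begin{pmatrix}p&0\\0&q\end{pmatrix}$, the swap $\begin{pmatrix}0&1\\1&0\end{pmatrix}$, and the single unipotent $\begin{pmatrix}1&1\\0&1\end{pmatrix}$, whereas you propose scalars, both families of unipotents, and a reflection---but the argument is otherwise identical, including the rank conclusion via similarity.
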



\subsubsection{Corresponding ODE system and the rank of the matrix $A$}

Theorem \ref{T:classA} and the consequent classification results below 
are true not only for
the system \eqref{E:sys} but also for any system which has the same structure with respect to the change of variable of the form \eqref{E:linearchange}.
More precisely, our results hold true for a system, or any mathematical object, if there exists a one-to-one correspondence between an element of $Z$
and if the actions of the following three specific matrices are the same as in \eqref{E:Apform}:
\begin{itemize}
\begin{subequations}
\item \emph{scalar multiplication;}
\begin{equation}\label{E:M1}
	M = \begin{pmatrix}p & 0 \\ 0 & q \end{pmatrix},
	\quad{i.e.,} \quad
	v_1 = p u_1, \quad v_2 = q u_2,
\end{equation}
where $p,q \in \R\setminus\{0\}$ are constants.
\item \emph{swapping variables;}
\begin{equation}\label{E:M2}
	M = \begin{pmatrix} 0 & 1 \\ 1 & 0 \end{pmatrix},
	\quad{i.e.,} \quad
	v_1 = u_2, \quad v_2 = u_1.
\end{equation}
\item \emph{row addition;}
\begin{equation}\label{E:M3}
	M = \begin{pmatrix} 1 & 1 \\ 0 & 1 \end{pmatrix},
	\quad{i.e.,} \quad
	v_1 = u_1 + u_2, \quad v_2 = u_2.
\end{equation}
\end{subequations}
\end{itemize}
This is because a general invertible linear transformation \eqref{E:linearchange}
is given as a finite composition of the above three changes.
For instance, a system of equations
\begin{equation}\label{E:gensys}
\left\{
\begin{aligned}
&\mathcal{L} u_1 
= \lambda_1u_1^3 + \l_2u_1^2u_2 + \l_3u_1u_2^2 + \l_4u_2^3, \\
&\mathcal{L} u_2 
= \lambda_5u_1^3 + \l_6u_1^2u_2 + \l_7u_1u_2^2 + \l_8u_2^3
\end{aligned}
\right.
\end{equation}
is such an example,
where $\mathcal{L}$ is an arbitrary linear operator.

Another example is the following ODE system:
\begin{equation}\label{E:limitODE}
\left\{
\begin{aligned}
	&2i\frac{\partial \alpha_1}{\partial s}=
	3\l_{1} |\a_1|^2\a_1+\l_{2} (2|\a_1|^2\a_2+\a_1^2\ol{\a_2})+
\l_{3}(2\a_1|\a_2|^2 +\ol{\a_1}\a_2^2)+3\l_{4}|\a_2|^2\a_2,\\
	&2i\frac{\partial \alpha_2}{\partial s}=
	3\l_{5} |\a_1|^2\a_1+\l_{6} (2|\a_1|^2\a_2+\a_1^2\ol{\a_2})+
\l_{7}(2\a_1|\a_2|^2 +\ol{\a_1}\a_2^2)+3\l_{8}|\a_2|^2\a_2.
\end{aligned}
\right.
\end{equation}
The system can be regarded as an element of $Z$ by the formula \eqref{E:defA},
and one easily verifies that the changes of the form \eqref{E:M1}--\eqref{E:M3} cause the same transform of the coefficients as for \eqref{E:sys}.
As  seen below, the system \eqref{E:limitODE}, which we refer to as \emph{a limit ODE system} in the paper, appears in the study of the large time asymptotic behavior
of solutions to the system \eqref{E:sys} with the same combination of the coefficients $\{\l_j\}_{j=1}^8$.
In view of the correspondence, it is quite reasonable that \eqref{E:limitODE} has the same structure as \eqref{E:sys} has.

In fact, our classification results is inspired by the analysis of \eqref{E:limitODE}.
It will turn out that the system has a
conserved quantity of the form
\[
	a |\alpha_1|^2 + 2 b\Re (\ol{\alpha}_1 \alpha_2 )+ c |\alpha_2|^2, \quad a,b,c \in \R
\]
if and only if $(a,b,c)$ lies in the kernel of $A$
 (see Subsection \ref{subsec:conserved_quantity}).
One immediate consequence is that
the rank of the matrix $A$ is an important quantity to identify the character of the system \eqref{E:limitODE}.
Further,
in consideration of the correspondence between \eqref{E:limitODE} and \eqref{E:sys},
it is natural to expect that
the larger $\rank A$ is, the more complicated the behavior of the solutions to \eqref{E:sys} is.

%


\subsubsection{Classification theorems}
In this paper, we focus our attention to the case $\rank A = 1$ or $2$. More precisely, we work with the following two subsets of $Z$:
The first one is
\[
	Z_1 := \{A\in Z \ |\ \rank A = 1\}.
\]
The second one is
\[
	Z_2 := \{  A \in Z \ |\ A \text{ satisfies }\eqref{E:main3cond}
 	\} 
	\subsetneq \{A\in Z \ |\ \rank A = 2\},
\]
where
\begin{equation}\label{E:main3cond}
	(\l_1,\l_2,\l_3) \neq (0,0,0), \quad \l_4=\l_5=0 ,\quad \l_6 = \l_1,\quad \l_7=\l_2,\quad \l_8=\l_3 .
\end{equation}
Remark that the systems \eqref{E:sysdecouple} and \eqref{E:sysSunagawa}
correspond to the matrices
\[
	\begin{pmatrix}
	0 & -3\l_1 & 0 \\
	0 & 0 & 0 \\
	0 & 3\l_8 & 0 
	\end{pmatrix} \in Z_1, \qquad
	\begin{pmatrix}
	0 & 0 & -3 \\
	0 & 0 & 0 \\
	0 & 0 & 0 
	\end{pmatrix} \in Z_1,
\]
respectively. Similarly, the system \eqref{E:syscomplex} is written as
\[
	\l
	\begin{pmatrix}
	0 & -2 & 0 \\
	1 & 0 & -1 \\
	0 & 2 & 0 
	\end{pmatrix} \in Z_2 \qquad(\l\neq0).
\]


We say a subset $\tilde{Z}$ of $Z$ is \emph{invariant} if $\tilde{Z} \ni A \sim A' \in Z$ implies $A' \in 
\tilde{Z}$. 
Our classification results are as follows:
\begin{theorem}[Classification of $Z_1$]\label{T:main1.5}
$Z_1$ is an invariant subset of $Z$.
One has
\begin{equation}\label{E:Z1roster}
\begin{aligned}
	Z_1/{\sim} ={}&
	\left\{ 
	\left[\begin{pmatrix}
	0 & -3 & 0 \\
	0 & 0 & 0 \\
	0 & 3 & 0 
	\end{pmatrix}\right],
	\left[\begin{pmatrix}
	0 & 3 & 0 \\
	0 & 0 & 0 \\
	0 & -3 & 0 
	\end{pmatrix}\right],
	\left[\begin{pmatrix}
	0 & -3 & 0 \\
	0 & 0 & 0 \\
	0 & -3 & 0 
	\end{pmatrix}\right],
	\right. \\
	&\quad \left. 
	\left[\begin{pmatrix}
	0 & -3 & 0 \\
	0 & 0 & 0 \\
	0 & 0 & 0 
	\end{pmatrix}\right],
	\left[\begin{pmatrix}
	0 & 3 & 0 \\
	0 & 0 & 0 \\
	0 & 0 & 0 
	\end{pmatrix}\right],
	\left[\begin{pmatrix}
	0 & 0 & 0 \\
	0 & 0 & -3 \\
	0 & 0 & 0 
	\end{pmatrix}\right],
		 \right. \\
	& \quad\left. 
	\left[\begin{pmatrix}
	0 & 0 & 0 \\
	0 & 0 & 3 \\
	0 & 0 & 0 
	\end{pmatrix}\right],
	\left[\begin{pmatrix}
	0 & 0 & -3 \\
	0 & 0 & 0 \\
	0 & 0 & 0 
	\end{pmatrix}\right],
	\left[\begin{pmatrix}
	0 & 0 & 0 \\
	-3 & 0 & -3 \\
	0 & 0 & 0 
	\end{pmatrix}\right]
	 \right\}.
\end{aligned}
\end{equation}
Further, $\# (Z_1/{\sim})=9$, i.e., the classes in the above roster notation are mutually disjoint 
\end{theorem}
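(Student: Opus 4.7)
The plan is to verify the three assertions of Theorem~\ref{T:main1.5} in sequence. First, the invariance of $Z_1$ is immediate from Theorem~\ref{T:classA}: the rank of $A$ is an equivalence invariant, so $A \in Z_1$ implies $[A] \subset Z_1$.

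Second, to show every $A \in Z_1$ is equivalent to one of the nine representatives, I would write $A = v w^{T}$ with $v, w \in \R^3 \setminus \{0\}$ and $v^{T} w = \tr A = 0$, which is always possible for $A \in Z_1$. By \eqref{E:Apform}, the action of $M$ sends $A = v w^{T}$ to $(\det M)^{-1} (D v)(D^{-T} w)^{T}$, hence it induces actions on $v$ and on $w$ separately. Identifying $(v_1, v_2, v_3) \in \R^3$ with the binary quadratic form $Q_v(X,Y) = v_1 X^2 + 2 v_2 XY + v_3 Y^2$, one verifies from \eqref{E:DMform} that the action on $v$ is, up to an overall scalar factor, the standard $GL_2(\R)$-action on binary quadratic forms by substitution of variables, while the action on $w$ is a dual-type twisted action. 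Using the $GL_2(\R)$-orbit classification of nonzero binary quadratic forms (positive/negative semi-definite of rank $1$, positive/negative definite, indefinite), I would first reduce $v$ to a canonical representative, then use the stabilizer of $v$ together with the coupled scaling ambiguity $(v, w) \sim (c v, c^{-1} w)$ to bring $w$ into a normal form subject to $v^{T} w = 0$. The resulting finite case analysis yields exactly the nine listed matrices.

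The third part, pairwise non-equivalence of the nine representatives, is the main obstacle. The coarse invariants (rank of $A$ and orbit types of $Q_v, Q_w$) partition the nine representatives into only six types, and the three ambiguous pairs $(1,2)$, $(4,5)$, $(6,7)$ are each of the form $(A, -A)$, so they require showing $A \not\sim -A$. Since both sides are rank-$1$, the equivalence $A \sim -A$ would force the eigenvector conditions $D(M) v = \lambda v$ and $D(M)^{-T} w = \mu w$ with the scalar constraint $\lambda \mu / \det M = -1$. I would enumerate, in each ambiguous type, the stabilizer of $(v, w)$ inside $GL_2(\R)$ and verify that $\lambda \mu / \det M > 0$ for every admissible $M$, which obstructs equivalence to $-A$ and distinguishes the three pairs. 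The distinction between Class 3 and Class 9, whose types look symmetric under swapping $v$ and $w$, should follow from the asymmetry of the induced actions (in particular the transpose $A \mapsto A^{T}$ is not in the equivalence group). The hardest ingredient is this sign analysis, requiring careful bookkeeping of stabilizers, eigenvalues, $\det M$ factors, and the coupled scaling.
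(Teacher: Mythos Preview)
Your approach is correct and essentially self-contained, but it is organized quite differently from the paper's.  The paper never proves Theorem~\ref{T:main1.5} directly; instead it first decomposes $Z_1 = Z_{1,+}\sqcup Z_{1,0}\sqcup Z_{1,-}$ according to the sign of $b^2-4ac$ where $\nu(A)={}^t(a,b,c)$ (Proposition~\ref{P:nuA}), and then treats each piece separately by an explicit geometric reduction (Theorems~\ref{T:main1a}, \ref{T:main2a}, \ref{T:main5a}) based on counting intersection points of the kernel plane $\nu(A)^\perp$ with the curve $\mathcal{C}$ of ``perfect-square'' conserved quantities (Theorem~\ref{T:rank1criteria}).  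The nine classes and their disjointness then fall out as $5+3+1$ from Theorems~\ref{T:main1}, \ref{T:main2}, \ref{T:main5}.  Your route is the purely algebraic counterpart: factor $A=vw^T$, identify both factors with binary quadratic forms, classify by the joint $GL_2(\R)$-orbit type of $(Q_v,P_w)$ modulo the scaling $(v,w)\sim(cv,c^{-1}w)$, and finish with a stabilizer computation.  In fact your $P_w$-discriminant invariant is exactly the paper's $\sign(b^2-4ac)$ on $\nu(A)$, so the two trichotomies coincide; what is genuinely different is that you replace the intersection-counting argument on the sphere by standard quadratic-form normal forms on \emph{both} $v$ and $w$.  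This buys a more uniform, representation-theoretic treatment; the paper's route buys the connection to conserved quantities of the limit ODE \eqref{E:limitODE}, which later motivates the choice of good variables in the asymptotic analysis.

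Two small remarks.  First, your concern about Classes~3 and~9 is unnecessary: their coarse invariants are already different (Class~3 has $Q_v$ definite and $P_w$ indefinite, Class~9 the reverse), so no extra argument is needed there.  Second, for the three genuine $A\not\sim -A$ distinctions the paper argues exactly as you propose---by forcing $\nu(A)$ to be an eigenvector of ${}^tD(M)^{-1}$, which pins down $M$ to be diagonal or anti-diagonal (see the proofs of Theorems~\ref{T:main1} and~\ref{T:main2}), and then reading off the sign obstruction directly from the transformed coefficients; your eigenvalue-product formulation $\lambda\mu/\det M=-1$ is equivalent.
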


\begin{theorem}[Classification of $Z_2$]\label{T:main3}
$Z_2$ is an invariant subset of $Z$.
One has
\begin{equation}\label{E:Z2roster}
\begin{aligned}
	Z_2/{\sim} ={}&
	\left\{ 
	\left[\begin{pmatrix}
	0 & -2 & 0 \\
	1 & 0 & -1 \\
	0 & 2 & 0 
	\end{pmatrix}\right],
	\left[\begin{pmatrix}
	0 & 2 & 0 \\
	-1 & 0 & 1 \\
	0 & -2 & 0 
	\end{pmatrix}\right],
	\left[\begin{pmatrix}
	0 & -2 & 0 \\
	0 & 0 & -1 \\
	0 & 0 & 0 
	\end{pmatrix}\right],
	\right. \\
	&\quad \left. 
	\left[\begin{pmatrix}
	0 & 2 & 0 \\
	0 & 0 & 1 \\
	0 & 0 & 0 
	\end{pmatrix}\right],
	\left[\begin{pmatrix}
	0 & -2 & 0 \\
	-1 & 0 & -1 \\
	0 & -2 & 0 
	\end{pmatrix}\right]
	 \right\}.
\end{aligned}
\end{equation}
Further, $\# (Z_2/{\sim})=5$, i.e., the classes in the above roster notation are mutually disjoint 
\end{theorem}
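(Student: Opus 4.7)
The plan is to exploit an intrinsic reinterpretation of $Z_2$ that converts the problem into a standard classification of real binary quadratic forms. With the coefficient constraints \eqref{E:main3cond} in force, the system \eqref{E:sys} takes the ``gauge'' form
\[
	(\square + 1) u_i = Q(u_1,u_2)\, u_i \quad (i=1,2), \qquad Q(u_1,u_2):=\l_1 u_1^2+\l_2 u_1u_2+\l_3 u_2^2,
\]
and the condition $(\l_1,\l_2,\l_3)\ne(0,0,0)$ in \eqref{E:main3cond} is exactly $Q\not\equiv 0$. I would begin by recording this as a bijection between $Z_2$ and the set $\mathcal{Q}$ of nonzero real quadratic forms in two variables.

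Next I would verify invariance of $Z_2$ under \eqref{E:linearchange} by a one-line computation: if $v=Mu$, then applying $M$ to both equations and using that $Q(u)$ is a scalar yields $(\square+1)v = Q(M^{-1}v)\, v$, which is again of gauge form with quadratic form $\widetilde{Q}=Q\circ M^{-1}\in\mathcal{Q}$. This simultaneously shows that $Z_2$ is invariant and that the equivalence relation $\sim$ on $Z_2$ transports, under the bijection $A\leftrightarrow Q$, to the classical $GL_2(\R)$-congruence action $Q\mapsto Q\circ M^{-1}$ on $\mathcal{Q}$. Consistency with the general matrix formula \eqref{E:Apform} is automatic from Theorem \ref{T:classA} once applied to this invariant subset.

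The classification then reduces to Sylvester's law of inertia: every nonzero real binary quadratic form is $GL_2(\R)$-congruent to exactly one of $u_1^2+u_2^2$, $-u_1^2-u_2^2$, $u_1^2-u_2^2$, $u_1^2$, or $-u_1^2$, distinguished by rank and signature. Translating these five standard forms back through \eqref{E:defA} with $(\l_4,\l_5,\l_6,\l_7,\l_8)=(0,0,\l_1,\l_2,\l_3)$ reproduces exactly the five representatives listed in \eqref{E:Z2roster}, while the $GL_2(\R)$-invariance of rank and signature guarantees pairwise inequivalence of the classes. The only conceptual step is the reformulation in the first paragraph; once $Z_2$ is recognised as the family of gauge-form systems, the remainder of the argument is essentially the standard diagonalisation of real symmetric $2\times 2$ matrices, so no substantive obstacle is anticipated.
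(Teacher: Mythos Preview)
Your proposal is correct and follows essentially the same route as the paper: both recognise that under \eqref{E:main3cond} the system has the ``gauge'' form $(\square+1)u_i = Q(u_1,u_2)u_i$ with $Q$ a nonzero binary quadratic form, observe that a linear change $v=Mu$ preserves this form while replacing $Q$ by $Q\circ M^{-1}$, and then invoke the standard $GL_2(\R)$-classification of real quadratic forms by rank and signature to obtain the five representatives. Your write-up is somewhat more explicit about the bijection $Z_2\leftrightarrow\mathcal{Q}$ and the appeal to Sylvester, but the argument is the same.
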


\subsubsection{Four new model systems}
Theorems \ref{T:main1.5} and \ref{T:main3}
draw our attention to the following four systems which do not seem to be studied before:
The first two are
\begin{equation}\label{E:sysnewa}
\left\{
\begin{aligned}
&(\square + 1)u_1 
= \l u_1^3, \\
&(\square + 1)u_2 
= 3\l u_1^2u_2,
\end{aligned}
\right.
\qquad
A= 
\begin{pmatrix}
0 & 0 & 0 \\
0 & 0 & -3\l \\
0 & 0 & 0 
\end{pmatrix}
\end{equation}
and
\begin{equation}\label{E:sysnewb}
\left\{
\begin{aligned}
&(\square + 1)u_1 
= \l u_1^3, \\
&(\square + 1)u_2 
= \l u_1^2u_2,
\end{aligned}
\right.
\qquad
A= 
\begin{pmatrix}
0 & -2\l & 0 \\
0 & 0 & -\l \\
0 & 0 & 0 
\end{pmatrix},
\end{equation}
where $\l \in \{\pm 1\}$.
The other two are
\begin{equation}\label{E:sysnew2}
\left\{
\begin{aligned}
&(\square + 1)u_1 
= (u_1^2-3u_2^2)u_1 , \\
&(\square + 1)u_2 
= (3u_1^2-u_2^2)u_2,
\end{aligned}
\right.
\qquad
A= 
\begin{pmatrix}
0 & 0 & 0 \\
-3 & 0 & -3 \\
0 & 0 & 0 
\end{pmatrix}
\end{equation}
and
\begin{equation}\label{E:sysnew3}
\left\{
\begin{aligned}
&(\square + 1)u_1 
= (u_1^2-u_2^2)u_1, \\
&(\square + 1)u_2 
= (u_1^2-u_2^2)u_2,
\end{aligned}
\right.
\qquad
A= 
\begin{pmatrix}
0 & -2 & 0 \\
-1 & 0 & -1 \\
0 & -2 & 0 
\end{pmatrix}.
\end{equation}

With theses four systems, we have all \emph{model systems}, representatives, of $Z_1/{\sim}$
and $Z_2/{\sim}$.
Theorem \ref{T:main1.5} is then formally summarized as
\[
	Z_1/{\sim} = \{ [\eqref{E:sysdecouple}],\, [\eqref{E:sysnewa}], \, [\eqref{E:sysSunagawa}], \, [\eqref{E:sysnew2}]  \}.
\]
There is an abuse of notation: $[\eqref{E:sysdecouple}]$ stands for the first five equivalent classes and $[\eqref{E:sysnewa}]$ for the next two in the roster notation in \eqref{E:Z1roster}.
Similarly, Theorem \ref{T:main3} is formally summarized as
\[
	Z_2/{\sim} = \{ [\eqref{E:syscomplex}],\, [\eqref{E:sysnewb}], \, [\eqref{E:sysnew3}]  \}
\]
with an abuse of notation: $[\eqref{E:syscomplex}]$ stands for the first two equivalence classes and $[\eqref{E:sysnewb}]$ for the next two in the roster notation in \eqref{E:Z2roster}.

\subsection{Reduction procedure}

From the view point of the application, it is important to have a procedure to transform
a given matrix $A$ in $Z_1$ or $Z_2$ into the corresponding model system.
We introduce notation.
For a real number $x$, $\sign x$ stands for its sign:
$\sign x = 1$ if $x>0$, $\sign x = 0$ if $x=0$, and $\sign x = -1$ if $x<0$.
For sets $A$ and $B$, we use $A \sqcup B$ to denote their union if they are disjoint.

\subsubsection{The case $\rank A=1$}
Let us begin with the explicit procedure for systems in $Z_1$. 
To this end, we first divide $Z_1$ into three disjoint subsets.
To define the subsets, we introduce several more notation.
Let $\mathcal{B}$ be the unit sphere of $\R^3$. 
For $A\in Z_1$, let $W(A)$ be the solution space of the equation
\begin{equation}\label{E:Aeq}
	A \begin{pmatrix}
a \\ b \\ c
\end{pmatrix} =0.
\end{equation}
Since $\rank A=1$, 
$W(A)$ is a two dimensional linear subspace of $\R^3$, i.e., a plain which contains the origin.
Therefore $W(A)$ is characterized by a unit vector $\nu(A) \in \mathcal{B}$ as
\begin{equation}\label{E:defnu}
	W(A) = \nu(A)^\perp:= \{ x=\ltrans{(a,b,c)} \in \R^3 \ | \ \nu(A) \cdot x=0 \}.
\end{equation}
Notice that $\nu(A)$ is the vector such that the matrix $A$ is reduced into 
$\ltrans{\begin{pmatrix}
\ltrans{\nu(A)} & \ltrans{\bf 0} & \ltrans{\bf 0}
\end{pmatrix}}$
by row operations, where ${\bf 0}\in \R^3$ denotes the zero column vector.
Remark that $\nu(A)$ is not determined uniquely from $A$ because $\nu^\perp = (-\nu)^\perp$ for any $\nu\in \mathcal{B}$.
The choice becomes unique if we introduce some rules to define the sign for each $A$
or if we simply identify them, i.e., if we regard $\nu$ as an element of the projective plane $\R P_2$.
Anyway, we do not care about this uniqueness issue in what follows because the choice does not matter.

Let $\mathcal{C},\mathcal{D}\subset \mathcal{B}$ be simple closed smooth curves given by
\begin{equation}\label{E:defC}
	\begin{aligned}
	\mathcal{C}:={}& \{ \ltrans{(a,b,c)} \in \mathcal{B} \ | \ b^2 = ac ,\, a\ge0,\, c\ge 0\}\\
	={}&\left\{ \ltrans{\( \tfrac{\sqrt2(1+\sin \theta)}{\sqrt{7-\cos 2\theta}} , 
	\tfrac{\sqrt2 \cos \theta}{\sqrt{7-\cos 2\theta}},
	\tfrac{\sqrt2 (1-\sin \theta)}{\sqrt{7-\cos2\theta}}\)} \in \mathcal{B} \ \middle| \ 0 \le \theta \le 2\pi \right\}
	\end{aligned}
\end{equation}
and 
\begin{equation}\label{E:defD}
	\begin{aligned}
	\mathcal{D} :={}& \{ \ltrans{(a,b,c)} \in \mathcal{B} \ | \ b^2 = 4ac ,\, a\ge0,\, c\ge 0\}\\
	={}&
	\left\{ \ltrans{\( \tfrac{1-\sin \theta}{\sqrt{5+\cos 2\theta}} , 
	-\tfrac{2 \cos \theta}{\sqrt{5+\cos 2\theta}},
	\tfrac{ 1+\sin \theta}{\sqrt{5+\cos2\theta}}\)}\in \mathcal{B} \ \middle| \ 0 \le \theta \le 2\pi \right\},
	\end{aligned}
\end{equation}
respectively.
Further, let $S_1, S_2\subset \mathcal{B}$ be open connected surfaces given by
\begin{equation}\label{E:defS12}
	S_1 := \{ \ltrans{(a,b,c)} \in \mathcal{B} \ | \ b^2 > 4ac \}
\quad\text{and}
\quad
	S_2 := \{ \ltrans{(a,b,c)} \in \mathcal{B} \ | \ b^2 < 4ac ,\, a>0 \},
\end{equation}
respectively. Remark that
\begin{equation}\label{E:cBdecomp}
	\mathcal{B}  = S_2 \sqcup \mathcal{D} \sqcup S_1 \sqcup (-\mathcal{D}) \sqcup (-S_2).
\end{equation}
Further, $\partial S_1=\mathcal{D} \sqcup (-\mathcal{D})$ and $\partial S_2=\mathcal{D}$.
We define three subsets
\begin{align*}
	Z_{1,+} :={}& \{ A \in Z_1 \ |\ \nu (A) \in S_1 \},\\
	Z_{1,0} :={}& \{ A \in Z_1 \ |\ \nu (A) \in \mathcal{D} \sqcup (-\mathcal{D}) \},\\
	Z_{1,-} :={}& \{ A \in Z_1 \ |\ \nu (A) \in S_2 \sqcup (-S_2) \}.
\end{align*}
By \eqref{E:cBdecomp}, we have
\[
	Z_1 =  Z_{1,+} \sqcup  Z_{1,0} \sqcup  Z_{1,-}.
\]

Remark that it is easy to calculate $\nu(A)\in \mathcal{B}$ from a given $A\in Z_1$,
and consequently it is easy to tell which of three subsets $A$ belongs to.
For $A \in Z_1$ with $\nu(A) = \ltrans{(a,b,c)}$, the quantity $\sign (b^2-4ac)$
is an invariant quantity under the equivalence relation (see Proposition \ref{P:nuA}).
This tells us that $Z_{1,+}$, $Z_{1,0}$, and $Z_{1,-}$ are invariant subspaces and it holds that
\[
	Z_{1}/{\sim} = (Z_{1,+}/{\sim}) \sqcup (Z_{1,0}/{\sim}) \sqcup (Z_{,-1}/{\sim}).
\]
Hence, it is sufficient to discuss the reduction procedure in each subspace.

\begin{theorem}[Characterization of $Z_{1,+}/{\sim}$]\label{T:main1}
$Z_{1,+}$ is an invariant subset of $Z$.
One has
\begin{align*}
	Z_{1,+}/{\sim} ={}&
	\left\{ 
	\left[\begin{pmatrix}
	0 & -3 & 0 \\
	0 & 0 & 0 \\
	0 & 3 & 0 
	\end{pmatrix}\right],
	\left[\begin{pmatrix}
	0 & 3 & 0 \\
	0 & 0 & 0 \\
	0 & -3 & 0 
	\end{pmatrix}\right],
	\left[\begin{pmatrix}
	0 & -3 & 0 \\
	0 & 0 & 0 \\
	0 & -3 & 0 
	\end{pmatrix}\right],
	\right. \\
	&\quad \left. 
	\left[\begin{pmatrix}
	0 & -3 & 0 \\
	0 & 0 & 0 \\
	0 & 0 & 0 
	\end{pmatrix}\right],
	\left[\begin{pmatrix}
	0 & 3 & 0 \\
	0 & 0 & 0 \\
	0 & 0 & 0 
	\end{pmatrix}\right]
	 \right\}
\end{align*}
and $\# (Z_{1,+}/{\sim})=5$.
With the abuse of notation, this reads as $Z_{1,+}/{\sim} = \{[\eqref{E:sysdecouple}]\}$.
\end{theorem}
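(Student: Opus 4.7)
The approach is to first reduce the ``kernel direction'' $\nu(A)$ to a canonical position and then classify the remaining orbit data. Invariance of $Z_{1,+}$ follows immediately from Proposition \ref{P:nuA}: the sign of $b^2-4ac$ for $\nu(A)=\ltrans{(a,b,c)}$ is preserved by $\sim$, and $Z_{1,+}$ is precisely the level set where this sign is positive.

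Fix $\nu_0:=\ltrans{(0,1,0)}\in S_1$. The first step is to show every $A\in Z_{1,+}$ is equivalent to some $A'$ with $\nu(A')$ proportional to $\nu_0$. From Theorem \ref{T:classA} we have $\ker A' = D(M)\ker A$, so $\nu$ transforms (up to a nonzero scalar) by $\nu\mapsto\ltrans{D(M)}^{-1}\nu$. A direct computation from \eqref{E:DMform} shows that, under the identification $\ltrans{(a,b,c)}\leftrightarrow aX^2+bXY+cY^2$, this $\R^3$-action coincides with the classical $GL_2(\R)$-action on binary real quadratic forms. The condition $\nu(A)\in S_1$ is exactly indefiniteness of the associated form; by the real Sylvester theorem every indefinite binary form is $GL_2(\R)$-equivalent to $XY$, whose coefficient vector is $\nu_0$, so there exists $M$ achieving the reduction.

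Assume now $\nu(A)=\nu_0$. Since $\ker A = \nu_0^\perp = \{\ltrans{(a,b,c)}\in\R^3 : b=0\}$, the first and third columns of $A$ vanish; combined with $\rank A = 1$ and $\tr A = 0$ this forces
\[
A=\begin{pmatrix}0&\alpha&0\\0&0&0\\0&\gamma&0\end{pmatrix},\qquad (\alpha,\gamma)\in\R^2\setminus\{(0,0)\}.
\]
A short calculation from \eqref{E:DMform} shows the stabilizer of $\R\nu_0$ under $M\mapsto\ltrans{D(M)}^{-1}$ is generated by the diagonal matrices $M=\begin{pmatrix}p&0\\0&q\end{pmatrix}$ ($p,q\ne 0$) and the swap $M=\begin{pmatrix}0&1\\1&0\end{pmatrix}$, and that these act on the residual data by $(\alpha,\gamma)\mapsto(\alpha/p^2,\gamma/q^2)$ and $(\alpha,\gamma)\mapsto(\gamma,\alpha)$, respectively. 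Hence the stabilizer-orbit of $(\alpha,\gamma)$ is determined by the unordered multiset $\{\sign\alpha,\sign\gamma\}$, and the magnitudes of nonzero entries can be normalized to $3$. The five possible multisets $\{+,+\},\{-,-\},\{+,-\},\{+,0\},\{-,0\}$ produce exactly the five canonical matrices in the statement, and the multiset being a well-defined invariant of the $\sim$-class gives pairwise inequivalence.

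The main obstacle is the bookkeeping in two matrix-algebraic verifications: (i) that $\nu\mapsto\ltrans{D(M)}^{-1}\nu$ on $\R^3$ realizes the $GL_2(\R)$-action on binary quadratic forms, so that Sylvester's law supplies the first reduction, and (ii) that the stabilizer and its induced action on $(\alpha,\gamma)$ are as claimed. Both follow from direct use of \eqref{E:DMform}; there is no conceptual obstruction once the computations are carried out.
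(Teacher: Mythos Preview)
Your strategy is sound and, for the reduction to canonical form, genuinely different from the paper's: the paper obtains the inclusion ``$\subset$'' by invoking the explicit construction of Theorem~\ref{T:main1a}, whereas you recognize the induced action on $\nu$ as the $GL_2(\R)$-action on binary quadratic forms and appeal to Sylvester's law. For the distinctness of the five classes both arguments compute the stabilizer of $\R\nu_0$ and its residual action, so there the approaches essentially coincide.

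There is, however, a computational error that breaks your conclusion as written. For the swap $M=\begin{pmatrix}0&1\\1&0\end{pmatrix}$ one has $\det M=-1$, and the prefactor $(\det M)^{-1}$ in \eqref{E:Apform} contributes a sign: a direct calculation (cf.\ \eqref{E:At2}) gives $(\alpha,\gamma)\mapsto(-\gamma,-\alpha)$, not $(\gamma,\alpha)$. Hence the multiset $\{\sign\alpha,\sign\gamma\}$ is \emph{not} an orbit invariant. Indeed, the first two matrices listed in the statement have $(\alpha,\gamma)=(-3,3)$ and $(3,-3)$, hence the same multiset $\{+,-\}$, yet they are (correctly) asserted to be inequivalent; and no listed matrix realizes your multiset $\{+,+\}$. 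The correct orbits of sign pairs under $(\sigma_1,\sigma_2)\mapsto(-\sigma_2,-\sigma_1)$ are
\[
\{(+,+),(-,-)\},\quad \{(+,-)\},\quad \{(-,+)\},\quad \{(+,0),(0,-)\},\quad \{(-,0),(0,+)\},
\]
and these do match the five listed matrices. Equivalently, passing to $(\lambda_1,\lambda_8)=(-\alpha/3,\gamma/3)$ turns the swap into $(\lambda_1,\lambda_8)\mapsto(\lambda_8,\lambda_1)$, so the invariant really is the multiset $\{\sign\lambda_1,\sign\lambda_8\}$; this is exactly how the paper phrases the distinctness argument. With this correction your proof is complete.
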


The following is an explicit procedure of the reduction of a system in $Z_{1,+}$ into the corresponding model system.

\begin{theorem}[Reduction of systems in $Z_{1.+}$]\label{T:main1a}
If $\rank A=1$ and $\nu(A) \in S_1$ then 
there exist 
\[
	{\bf p}_j = \ltrans{\( \tfrac{\sqrt2(1+\sin \theta_j)}{\sqrt{7-\cos 2\theta_j}} , 
	\tfrac{\sqrt2 \cos \theta_j}{\sqrt{7-\cos 2\theta_j}},
	\tfrac{\sqrt2 (1-\sin \theta_j)}{\sqrt{7-\cos2\theta_j}}\)}\in \mathcal{C} \qquad (\theta_j \in \R/2\pi\Z )
\]
($j=1,2$), ${\bf p}_1\neq{\bf p}_2$,  such that
$\nu(A)^\perp = \Span \{ {\bf p}_1,{\bf p}_2\}$.
Further, there exist $(k_1 ,k_2) \in \R^2 \setminus \{0\}$ such that $A$ is written as
\[
	A = (k_1 {\bf p}_1+k_2 {\bf p}_2)
	\ltrans{\begin{pmatrix} \cos \theta_1(1-\sin \theta_2)-\cos \theta_2(1-\sin \theta_1)\\
	-2 (\sin \theta_1 -  \sin \theta_2)\\
	-\cos \theta_1 (1+\sin \theta_2)+\cos \theta_2 (1+\sin \theta_1)
	\end{pmatrix}}.
\] 
If we define new unknowns $v_1$ and $v_2$ by
\[
	v_{j} = \begin{cases}
	\sqrt{1+\sin \theta_{j}} u_{1} + \sign (\cos \theta_{j}) \sqrt{1-\sin \theta_{j}}  u_{2} & \theta_{j} \neq 3\pi/2,\\
	\sqrt 2 u_{2} & \theta_{j} = 3\pi/2
	\end{cases}
\]
($j=1,2$) then $(v_1,v_2)$ solves
\begin{equation}\label{E:systh1}
\left\{
\begin{aligned}
&(\square + 1)v_1 
= \tfrac{2k_1}{3} (1- \cos (\theta_1 - \theta_2)) v_1^3 , \\
&(\square + 1)v_2 
= -\tfrac{2k_2}{3} (1- \cos (\theta_1 - \theta_2)) v_2^3.
\end{aligned}
\right.
\end{equation}
\end{theorem}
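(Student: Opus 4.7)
The plan is to proceed in three steps: first establish the geometric decomposition of $\nu(A)^\perp$ by intersecting it with the curve $\mathcal{C}$, then produce the rank-one outer-product factorization of $A$, and finally apply Theorem \ref{T:classA} to verify the stated reduction.

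\textbf{Step 1 (intersection with $\mathcal{C}$).} Writing $\nu(A) = (a,b,c)^\top$, the assumption $\nu(A)\in S_1$ reads $b^2>4ac$. A point $\mathbf{p}(\theta)\in\mathcal{C}$ lies in $\nu(A)^\perp$ iff
\[
(a+c) + (a-c)\sin\theta + b\cos\theta = 0,
\]
which, viewed as a linear equation in $(\sin\theta,\cos\theta)$ restricted to the unit circle, admits two distinct solutions modulo $2\pi$ iff $(a-c)^2+b^2>(a+c)^2$, exactly $b^2>4ac$. This yields $\theta_1\ne\theta_2$ and hence $\mathbf{p}_1,\mathbf{p}_2\in\mathcal{C}$ spanning the two-dimensional plane $\nu(A)^\perp$.

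\textbf{Step 2 (rank-one factorization).} Because $\rank A = 1$, write $A=\mathbf{u}\mathbf{w}^\top$ for some nonzero $\mathbf{u},\mathbf{w}\in\R^3$. The kernel of $A$ is the hyperplane $\mathbf{w}^\perp$, which must coincide with $\nu(A)^\perp$, so $\mathbf{w}$ is proportional to $\mathbf{p}_1\times \mathbf{p}_2$. A direct expansion of this cross product using the unnormalized coordinates $(1+\sin\theta_j,\cos\theta_j,1-\sin\theta_j)$ reproduces exactly the column vector displayed in the theorem, showing one may take $\mathbf{w}$ equal to it. The defining condition $\tr A=0$ of $Z$ gives $\mathbf{u}\cdot\mathbf{w}=0$, hence $\mathbf{u}\in\mathbf{w}^\perp=\Span\{\mathbf{p}_1,\mathbf{p}_2\}$, so $\mathbf{u}=k_1\mathbf{p}_1+k_2\mathbf{p}_2$ for some $(k_1,k_2)\in\R^2$, necessarily nonzero since $A\neq0$.

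\textbf{Step 3 (reduction via Theorem \ref{T:classA}).} Define the change of variable $v_j=\alpha_j u_1+\beta_j u_2$ with $\alpha_j=\sqrt{1+\sin\theta_j}$ and $\beta_j=\sign(\cos\theta_j)\sqrt{1-\sin\theta_j}$ (the degenerate case $\theta_j=3\pi/2$, where $\alpha_j=0$ and $\sign(\cos\theta_j)=0$, requires the ad hoc specification $v_j=\sqrt{2}\,u_2$). The identities
\[
\alpha_j^2 = 1+\sin\theta_j,\qquad \beta_j^2=1-\sin\theta_j,\qquad \alpha_j\beta_j=\cos\theta_j
\]
encode the curve equation $b_j^2=a_jc_j$ on $\mathcal{C}$ and are the workhorse of the calculation. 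Nondegeneracy $\det M\ne 0$ follows by observing that proportionality of $(\alpha_1,\beta_1)$ and $(\alpha_2,\beta_2)$ would force $\sin\theta_1=\sin\theta_2$ together with compatible signs of $\cos\theta_j$, contradicting $\mathbf{p}_1\neq \mathbf{p}_2$ (the only exception being the isolated case $\theta_j=3\pi/2$, which is treated separately and is immediate). Invoking Theorem \ref{T:classA} gives $A'=(\det M)^{-1}D(M)AD(M)^{-1}$, and substituting $A=(k_1\mathbf{p}_1+k_2\mathbf{p}_2)(\mathbf{p}_1\times\mathbf{p}_2)^\top$, one checks that the rows of $D(M)$ pair with $\mathbf{p}_j$ so as to send $\mathbf{p}_j$ to a coordinate direction, and that $(\mathbf{p}_1\times\mathbf{p}_2)^\top D(M)^{-1}$ produces a single nonzero entry in the middle column. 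The resulting $A'$ is then read off as the matrix attached to \eqref{E:systh1}, with the prefactor $1-\cos(\theta_1-\theta_2)$ emerging from symmetric bilinear combinations such as $\alpha_1^2\beta_2^2+\alpha_2^2\beta_1^2-2\alpha_1\alpha_2\beta_1\beta_2$ expressed via the double-angle identity.

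\textbf{Main obstacle.} The heart of the proof is the algebraic verification in Step~3: one must track the $\sign(\cos\theta_j)$ factors without spurious ambiguity, juggle the normalization constants carried by the parametrization of $\mathcal{C}$, and consolidate multiple products of the $\alpha_j,\beta_j$ into the compact trigonometric factor $1-\cos(\theta_1-\theta_2)$. Systematic use of $\alpha_j\beta_j=\cos\theta_j$ and $\alpha_j^2+\beta_j^2=2$ keeps the computation tractable, but care is needed to confirm that all entries of $A'$ outside the $(1,2)$ and $(3,2)$ positions indeed vanish, as required by the decoupled form of \eqref{E:systh1}.
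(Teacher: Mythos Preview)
Your argument is correct, and Steps~1--2 coincide with the paper's (your Step~1 is in fact a direct, more elementary proof of the two-intersection property that the paper obtains from its trichotomy Theorem~\ref{T:rank1criteria}). The genuine difference is Step~3: the paper does \emph{not} invoke Theorem~\ref{T:classA} at all. Instead it reads off the eight coefficients $\lambda_1,\dots,\lambda_8$ from the outer-product factorization, substitutes directly into the transformed nonlinearity to form $\eta_{j,k}=\sqrt{1+\sin\theta_j}\,\lambda_k+\sign(\cos\theta_j)\sqrt{1-\sin\theta_j}\,\lambda_{k+4}$, and simplifies each of these eight expressions by hand using trigonometric identities and a symmetry relation between $\lambda_\ell$ and $\lambda_{9-\ell}$.

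Your route via $A'=(\det M)^{-1}D(M)AD(M)^{-1}$ is considerably cleaner once one notices the key structural fact you allude to: the first and third columns of $D(M)^{-1}$ are (up to the factor $(\det M)^{-1}$) exactly the unnormalized points $(\alpha_j^2,\alpha_j\beta_j,\beta_j^2)^\top$ on~$\mathcal{C}$, so $D(M)$ sends these to $(\det M)\,e_1$ and $(\det M)\,e_3$; and since $\mathbf w$ is orthogonal to both, $\mathbf w^\top D(M)^{-1}$ is supported on the middle entry. The prefactor $1-\cos(\theta_1-\theta_2)$ then drops out of the single identity $(\det M)^2=(\alpha_1\beta_2-\alpha_2\beta_1)^2=2\bigl(1-\cos(\theta_1-\theta_2)\bigr)$, avoiding the term-by-term cancellations the paper performs. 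The trade-off is that the paper's computation, though longer, requires no understanding of how $D(M)$ acts on the cone $b^2=ac$; yours buys brevity at the cost of that insight.

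One caution: be careful with normalizations. The vector $\mathbf w$ displayed in the theorem is the cross product of the \emph{unnormalized} $(1+\sin\theta_j,\cos\theta_j,1-\sin\theta_j)$, not $\mathbf p_1\times\mathbf p_2$ with $\mathbf p_j\in\mathcal{C}\subset\mathcal{B}$. To land on the stated coefficients $\tfrac{2k_j}{3}(1-\cos(\theta_1-\theta_2))$ via your method you must use the unnormalized vectors throughout (equivalently, absorb the factors $r(\theta_j)=\sqrt{2}/\sqrt{7-\cos 2\theta_j}$ into the $k_j$); otherwise stray $r(\theta_1)\ne r(\theta_2)$ factors survive.
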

\begin{remark}
Remark that \eqref{E:systh1} is of the form \eqref{E:sysdecouple}.
As mentioned above, one has a system with $\tilde{\l}_1=\sign k_1$, $\tilde{\l}_8 = -\sign k_2$, and $\tilde{\l}_2=\dots = \tilde{\l}_7=0$
 by
a further application of \eqref{E:linearchange} with \eqref{E:M1}.
\end{remark}

\begin{theorem}[Characterization of $Z_{1,0}/{\sim}$]\label{T:main2}
$Z_{1,0}$ is an invariant subset of $Z$.
One has 
\begin{align*}
	Z_{1,0}/{\sim} ={}&
	\left\{ 
	\left[\begin{pmatrix}
	0 & 0 & 0 \\
	0 & 0 & -3 \\
	0 & 0 & 0 
	\end{pmatrix}\right],
	\left[\begin{pmatrix}
	0 & 0 & 0 \\
	0 & 0 & 3\\
	0 & 0 & 0 
	\end{pmatrix}\right],
	\left[\begin{pmatrix}
	0 & 0 & -3 \\
	0 & 0 & 0 \\
	0 & 0 & 0 
	\end{pmatrix}\right]
	 \right\}
\end{align*}
and $\# (Z_{1,0}/{\sim})=3$.
With the abuse of notation, this reads as $Z_{1.0}/{\sim}=\{[\eqref{E:sysnewa}], [\eqref{E:sysSunagawa}]\}$.
\end{theorem}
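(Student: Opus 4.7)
The plan is to combine Theorem~\ref{T:classA} with a two-step normalization: first use the equivalence to bring $\nu(A)$ to a canonical direction, then use the residual stabilizer of that direction to normalize the remaining entries of $A$. Invariance of $Z_{1,0}$ follows from Proposition~\ref{P:nuA}: writing $\ltrans{(a,b,c)}=\nu(A)$, the condition $\nu(A)\in\mathcal{D}\sqcup(-\mathcal{D})$ amounts to $b^2-4ac=0$ (with $ac\ge 0$ then automatic), and $\sign(b^2-4ac)$ is preserved under the equivalence relation.

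For $A\in Z_{1,0}$, the cone $\mathcal{D}\sqcup(-\mathcal{D})$ is, up to positive scaling, the image of the map $(p,q)\mapsto\pm\ltrans{(p^2,-2pq,q^2)}$, so I write $\nu(A)=\pm\ltrans{(p_0^2,-2p_0q_0,q_0^2)}$ for some $(p_0,q_0)\in\R^2\setminus\{(0,0)\}$. A direct calculation with $D(M)^{-T}$ derived from \eqref{E:DMform} yields
\[
D(M)^{-T}\,\ltrans{(p^2,-2pq,q^2)}=\tfrac{1}{\det M}\,\ltrans{\bigl((ap-bq)^2,\;2(ap-bq)(cp-dq),\;(cp-dq)^2\bigr)},
\]
so the induced action on the parameter $(p,q)$ is linear and transitive on projective directions. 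One can therefore choose $M\in GL_2(\R)$ bringing $(p_0,q_0)$ to $(0,1)$, i.e.\ sending $\nu(A)$ to $\ltrans{(0,0,1)}$. For this normalized $\nu(A)$, $\ker A=\Span\{\ltrans{(1,0,0)},\ltrans{(0,1,0)}\}$, and combined with $\rank A=1$ and $\tr A=0$ this forces
\[
A=\begin{pmatrix}0 & 0 & \alpha\\ 0 & 0 & \beta\\ 0 & 0 & 0\end{pmatrix},\qquad (\alpha,\beta)\in\R^2\setminus\{(0,0)\}.
\]

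The stabilizer of the line $\R\,\ltrans{(0,0,1)}$ under the $\nu$-action is the subgroup of lower-triangular matrices $M=\begin{pmatrix}a & 0\\ c & d\end{pmatrix}\in GL_2(\R)$, and a direct computation of $(\det M)^{-1}D(M)AD(M)^{-1}$ shows the residual action on $(\alpha,\beta)$ is
\[
(\alpha,\beta)\longmapsto\Bigl(\tfrac{d\alpha-2c\beta}{a^3},\ \tfrac{\beta}{a^2}\Bigr).
\]
When $\beta\neq 0$, I take $a^2=|\beta|/3$ and $c=d\alpha/(2\beta)$ (with any nonzero $d$) to reach $(\alpha',\beta')=(0,3\sign\beta)$; when $\beta=0$ (so $\alpha\neq 0$), the factor $d/a^3$ ranges over $\R\setminus\{0\}$ and normalizes $\alpha$ to $-3$, yielding $(-3,0)$. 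This produces exactly the three representatives listed in the statement. For distinctness, any equivalence between two already-normalized matrices must be effected by a stabilizer element, and the residual action above visibly preserves both $\sign\beta$ and the vanishing condition $\beta=0$; hence the three classes are pairwise disjoint and $\#(Z_{1,0}/{\sim})=3$. The main technical point is simply the bookkeeping in the two $D(M)$-computations above --- first on the cone parametrization of $\mathcal{D}$ to establish transitivity, then on the normalized $A$ to extract the residual action on $(\alpha,\beta)$ --- both of which are routine once written out.
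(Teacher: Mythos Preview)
Your proof is correct and takes a genuinely different, more algebraic route than the paper. The paper establishes the ``$\subset$'' direction by invoking Theorem~\ref{T:main2a}, whose proof involves an explicit $\theta$-parametrization of $\mathcal{D}$ and a detailed computation of the coefficients $\lambda_1,\dots,\lambda_8$ and $\eta_k$ after the change of variables; only then does it argue distinctness by restricting to $b=0$ and tracking $\tilde\lambda_1$. You instead do a clean two-step normalization: first use the transitivity of $M\mapsto D(M)^{-T}$ on the cone $\{(p^2,-2pq,q^2)\}$ to bring $\nu(A)$ to $\ltrans{(0,0,1)}$, then compute the residual action of the lower-triangular stabilizer directly on the pair $(\alpha,\beta)$. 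This bypasses Theorem~\ref{T:main2a} entirely and yields the three orbits and their distinctness in one stroke. The paper's approach has the advantage of producing the explicit change of unknowns (which is the content of Theorem~\ref{T:main2a} and is needed elsewhere); your approach is shorter and more conceptual for the classification statement itself, and the distinctness argument via the invariants $\sign\beta$ and the vanishing of $\beta$ is essentially the same as the paper's.
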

We have an explicit procedure also in this case.

\begin{theorem}[Reduction of systems in $Z_{1,0}$]\label{T:main2a}
If $\rank A=1$ and  $\nu(A) \in \mathcal{D} \cup (-\mathcal{D})$ then
there exist $\theta \in \R / 2\pi \Z$ and $\sigma \in \{-1,1\}$ such that
\[
	\sigma \nu(A) = 
	 \ltrans{\( \tfrac{1-\sin \theta}{\sqrt{5+\cos 2\theta}} , 
	-\tfrac{2 \cos \theta}{\sqrt{5+\cos 2\theta}},
	\tfrac{ 1+\sin \theta}{\sqrt{5+\cos2\theta}}\)}\in \mathcal{D}.
\]
Further, there exist $k ,\ell \in \R$ such that $A$ is written as
\[
	A = \(k \begin{pmatrix} 1+\sin \theta\\
	\cos \theta\\
	1- \sin \theta
	\end{pmatrix}+\ell \begin{pmatrix} \cos \theta\\
	-\sin \theta\\
	-\cos \theta
	\end{pmatrix}\)
	\ltrans{\begin{pmatrix} 1-\sin \theta\\
	-2\cos \theta\\
	1+ \sin \theta
	\end{pmatrix}}.
\] 
If we define new unknowns $v_1$ and $v_2$ by
\[
	v_{1} = \begin{cases}
	\sqrt{1+\sin \theta} u_{1} + \sign (\cos \theta) \sqrt{1-\sin \theta}  u_{2} & \theta \neq 3\pi/2,\\
	\sqrt2 u_{2} & \theta = 3\pi/2
	\end{cases}
\]
and
\[
	v_{2} = \begin{cases}
	\begin{aligned}&(k \sqrt{1+\sin \theta}+ \ell \sign (\cos \theta) \sqrt{1-\sin \theta} )u_{1} \\ 
	&\qquad +(k \sign (\cos \theta) \sqrt{1-\sin \theta} - \ell \sqrt{1+\sin \theta} )u_{2} 
	\end{aligned} & \theta \neq 3\pi/2 \text{ and } \ell\neq0,\\
	\sqrt2 (\ell u_1 + k u_2) & \theta = 3\pi/2 \text{ and } \ell\neq0, \\
	-(3/k \sqrt{1+\sin \theta})u_{2} & \theta \neq 3\pi/2 \text{ and } \ell=0, \\
	(3/\sqrt{2} k)u_{1} & \theta = 3\pi/2 \text{ and } \ell=0, 
	\end{cases}
\]
respectively, then $(v_1,v_2)$ solves
\begin{equation}\label{E:systh21}
\left\{
\begin{aligned}
&(\square + 1)v_1 
= \tfrac{\ell}3 v_1^3, \\
&(\square + 1)v_2 
= \ell v_1^2 v_2, 
\end{aligned}
\right.
\end{equation}
if $\ell\neq0$ and
\begin{equation}\tag{\ref{E:sysSunagawa}}\label{E:systh23}
\left\{
\begin{aligned}
&(\square + 1)v_1 
= 0, \\
&(\square + 1)v_2 
=  v_1^3, 
\end{aligned}
\right.
\end{equation}
if $\ell=0$.
\end{theorem}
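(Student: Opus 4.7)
The plan is to exploit the rank-one structure of $A$ together with the explicit transformation formula from Theorem~\ref{T:classA}. Since $\nu(A)\in\mathcal{D}\cup(-\mathcal{D})$, the parametrization \eqref{E:defD} produces $\sigma\in\{\pm1\}$ and $\theta\in\R/2\pi\Z$ as asserted, and setting
\[
\mathbf{n}:=\ltrans{(1-\sin\theta,\,-2\cos\theta,\,1+\sin\theta)},
\]
the right null space satisfies $W(A)=\mathbf{n}^\perp$. Because $\rank A=1$ every row of $A$ annihilates $W(A)$, so I would factor $A=\mathbf{w}\,\ltrans{\mathbf{n}}$ for a unique $\mathbf{w}\in\R^3$, and the condition $\tr A=0$ forces $\mathbf{w}\in\mathbf{n}^\perp$. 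A direct check shows
\[
\mathbf{p}:=\ltrans{(1+\sin\theta,\,\cos\theta,\,1-\sin\theta)},\qquad \mathbf{q}:=\ltrans{(\cos\theta,\,-\sin\theta,\,-\cos\theta)}
\]
are both orthogonal to $\mathbf{n}$, and testing $\alpha\mathbf{p}+\beta\mathbf{q}=0$ componentwise gives $\alpha=0$ by adding the first and third coordinates, and then $\beta=0$ from the second (using $\cos^2\theta+\sin^2\theta=1$). Hence $\{\mathbf{p},\mathbf{q}\}$ is a basis of $\mathbf{n}^\perp$ for every $\theta$, and $\mathbf{w}=k\mathbf{p}+\ell\mathbf{q}$ for a unique $(k,\ell)\in\R^2$, giving the stated representation of $A$.

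Next, introduce the matrix $M=\bigl(\begin{smallmatrix}a&b\\c&d\end{smallmatrix}\bigr)$ corresponding to the prescribed change of variables. Taking $a=\sqrt{1+\sin\theta}$ and $b=\sign(\cos\theta)\sqrt{1-\sin\theta}$, with the limiting value $(a,b)=(0,\sqrt2)$ at $\theta=3\pi/2$, yields the identities $a^2+b^2=2$, $ab=\sign(\cos\theta)|\cos\theta|=\cos\theta$, and $a^2-b^2=2\sin\theta$. When $\ell\neq0$, choosing $(c,d)=(ka+\ell b,\,kb-\ell a)$ produces $\det M=ad-bc=-\ell(a^2+b^2)=-2\ell$; when $\ell=0$ (necessarily $k\neq0$ since $\rank A=1$), choosing $(c,d)=(0,-3/(ka))$ produces $\det M=-3/k$. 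In both subcases $M\in GL_2(\R)$. The decisive structural observation is that by \eqref{E:DMform} the third row of $D=D(M)$ equals $(\det M)^{-1}(b^2,-2ab,a^2)=(\det M)^{-1}\ltrans{\mathbf{n}}$; that is, $\ltrans{\mathbf{n}}=\det(M)\,\ltrans{\mathbf{e}_3}D$ where $\mathbf{e}_3$ is the third standard basis vector. Substituting into $A=\mathbf{w}\ltrans{\mathbf{n}}$ and invoking Theorem~\ref{T:classA} collapses the transformation formula to
\[
A'=(\det M)^{-1}DAD^{-1}=(D\mathbf{w})\,\ltrans{\mathbf{e}_3},
\]
so $A'$ has all columns zero except the third, which equals $D\mathbf{w}$. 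The third entry $(D\mathbf{w})_3=(\det M)^{-1}\mathbf{n}\cdot\mathbf{w}$ vanishes automatically because $\mathbf{w}\in\mathbf{n}^\perp$, consistent with $\tr A'=0$.

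It remains to compute the first two entries of $D\mathbf{w}$. Substituting $\mathbf{w}=k\mathbf{p}+\ell\mathbf{q}$ and the explicit $(a,b,c,d)$ into rows one and two of $D$, all cubic $k^3$-monomials cancel identically, and the remaining $k^2\ell$, $k\ell^2$, $\ell^3$ contributions telescope using $a^2+b^2=2$, $ab=\cos\theta$, $a^2-b^2=2\sin\theta$, and $\cos^2\theta+\sin^2\theta=1$. The outcome is $D\mathbf{w}=\ltrans{(0,-\ell,0)}$ when $\ell\neq0$ and $D\mathbf{w}=\ltrans{(-3,0,0)}$ when $\ell=0$. Reading the coefficients $(\tilde\l_1,\dots,\tilde\l_8)$ off the resulting $A'$ by inverting \eqref{E:defA} produces exactly the systems \eqref{E:systh21} and \eqref{E:systh23}, respectively. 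The main technical obstacle is this final multivariate bookkeeping, in particular verifying that the $\sign(\cos\theta)$ factors always appear in pairs so that the final expressions are polynomial in $\sin\theta,\cos\theta$; once this pairwise cancellation is isolated, the degenerate angle $\theta=3\pi/2$ is handled uniformly by continuity, matching the piecewise formulae for $v_1$ and $v_2$ in the theorem statement.
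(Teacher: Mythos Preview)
Your proof is correct and takes a genuinely different route from the paper. The paper works directly at the level of the PDE system: it first writes out all eight coefficients $\lambda_1,\dots,\lambda_8$ in terms of $(k,\ell,\theta)$, then computes the equation for $v_1$ by forming the combinations $\eta_j=\sqrt{1+\sin\theta}\,\lambda_j+\sign(\cos\theta)\sqrt{1-\sin\theta}\,\lambda_{j+4}$ one by one, substitutes $u_1$ in terms of $(v_1,u_2)$ to obtain an intermediate equation for $u_2$, and only then introduces $v_2$ and checks its equation; the exceptional angle $\theta=3\pi/2$ is treated as a separate calculation. Your approach instead exploits Theorem~\ref{T:classA} structurally: the observation that the third row of $D(M)$ equals $(\det M)^{-1}\ltrans{\mathbf n}$ collapses $A'=(\det M)^{-1}DAD^{-1}$ to $(D\mathbf w)\ltrans{\mathbf e_3}$, so that only two entries of $D\mathbf w$ need to be computed rather than eight transformed coefficients. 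This is cleaner and makes better use of the matrix machinery the paper develops; the paper's hands-on method, on the other hand, never requires one to trust the somewhat delicate cubic cancellations you allude to, since each $\eta_j$ is computed independently.

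One small caveat: your ``by continuity'' treatment of $\theta=3\pi/2$ is imprecise, because $\sign(\cos\theta)$ jumps there and the one-sided limits of $(a,b)$ are $(0,\sqrt2)$ and $(0,-\sqrt2)$. This does not actually cause trouble---either sign yields the same reduced system since $(v_1,v_2)\mapsto(-v_1,v_2)$ preserves \eqref{E:systh21} and \eqref{E:systh23}---but it would be cleaner to say so explicitly rather than invoke continuity.
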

By a further application of \eqref{E:linearchange} with \eqref{E:M1},
the first case \eqref{E:systh21} corresponds to \eqref{E:sysnewa}.
We study the behavior in Theorem \ref{T:main4} below.
The behavior of the solution to \eqref{E:sysSunagawa} is studied in \cite{Su3}.

For the remaining case, we have the following:
\begin{theorem}[Characterization of $Z_{1,-}/{\sim}$]\label{T:main5}
$Z_{1,-}$ is an invariant subset of $Z$.
One has 
\begin{align*}
	Z_{1,-}/{\sim} ={}&
	\left\{ 
	\left[\begin{pmatrix}
	0 & 0 & 0 \\
	-3 & 0 & -3 \\
	0 & 0& 0 
	\end{pmatrix}\right]
	 \right\}.
\end{align*}
This reads as $Z_{1.-}/{\sim}=\{[\eqref{E:sysnew2}]\}$.
\end{theorem}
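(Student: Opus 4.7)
My plan is to show that any $A \in Z_{1,-}$ is $\sim$-equivalent to the representative
\[
A_{0}:=\begin{pmatrix}0&0&0\\-3&0&-3\\0&0&0\end{pmatrix}
\]
appearing in the theorem statement, by a two-step normalization of the rank-one factorization $A = u\ltrans{\nu}$. The invariance of $Z_{1,-}$ under $\sim$ is immediate from Proposition \ref{P:nuA}: writing $\nu(A)=\ltrans{(a,b,c)}$, the sign of $b^{2}-4ac$ is a $\sim$-invariant, and \eqref{E:defS12} identifies $S_{2}\sqcup(-S_{2})$ with the unit vectors satisfying $b^{2}-4ac<0$.

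For the main assertion, I would fix $A\in Z_{1,-}$ and use $\rank A=1$ to write $A=u\ltrans{\nu}$ with $\nu=\nu(A)$ and $u\in\nu^{\perp}\setminus\{0\}$ (orthogonality from $0=\tr A=\ltrans{\nu}u$). Set $\nu_{0}:=\ltrans{(1,0,1)}/\sqrt{2}\in S_{2}$ and $u_{0}:=\ltrans{(0,-3\sqrt{2},0)}$, so that $u_{0}\ltrans{\nu_{0}}=A_{0}$. The first step is to find $M_{1}\in GL_{2}(\R)$ whose equivalence sends $\nu$ to $\nu_{0}$ (up to sign). Under the identification $\ltrans{(a,b,c)}\leftrightarrow ax^{2}+bxy+cy^{2}$, the condition $\nu\in S_{2}\sqcup(-S_{2})$ says precisely that the binary form is definite; by Sylvester's law of inertia, every definite binary form is $GL_{2}(\R)$-congruent to $\pm(x^{2}+y^{2})\leftrightarrow\pm\nu_{0}$. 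Since \eqref{E:Apform} intertwines the $GL_{2}(\R)$-action on $A$ with the standard change-of-variable action on $\nu$ (this intertwining is precisely what Proposition \ref{P:nuA} invariantly expresses), such an $M_{1}$ exists; after this equivalence, and absorbing a sign into $u$, we may assume $\nu(A)=\nu_{0}$.

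In the second step, I would use the stabilizer of the line $\R\nu_{0}$ in $GL_{2}(\R)$ to transport the column vector $u\in\nu_{0}^{\perp}=\Span\{\ltrans{(1,0,-1)},\ltrans{(0,1,0)}\}$ to $u_{0}$. This stabilizer contains the rotations $R_{\theta}\in SO(2)$ (which preserve $x^{2}+y^{2}$) and the scalar matrices $\lambda I$ with $\lambda\neq0$. A direct computation from \eqref{E:DMform} yields: for $M=R_{\theta}$, the matrix $D(R_{\theta})$ fixes $\nu_{0}$ and restricts on $\nu_{0}^{\perp}$, in the basis $\{\ltrans{(1,0,-1)},\ltrans{(0,1,0)}\}$, to the rotation $R_{2\theta}$; and for $M=\lambda I$, \eqref{E:DMform} gives $D(M)=I$, so \eqref{E:Apform} reduces to $A'=\lambda^{-2}A$, a positive dilation of $u$. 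Composing, $\R_{>0}\cdot SO(2)$ acts transitively on $\nu_{0}^{\perp}\setminus\{0\}$, and a further $\sim$-equivalence moves $u$ to $u_{0}$, establishing $A\sim A_{0}$.

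I expect the main technical obstacle to be the doubling identity $D(R_{\theta})|_{\nu_{0}^{\perp}}=R_{2\theta}$, which must be extracted by a direct $3\times 3$ calculation from \eqref{E:DMform}. Everything else is either abstract (Sylvester's classification and the resulting transitivity of $\R_{>0}\cdot SO(2)$ on $\R^{2}\setminus\{0\}$) or follows directly from Proposition \ref{P:nuA}.
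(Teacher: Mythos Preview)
Your proposal is correct and follows essentially the same two-step normalization as the paper: first bring $\nu(A)$ to $\nu_0=\tfrac{1}{\sqrt 2}\ltrans{(1,0,1)}$, then use the stabilizer of $\nu_0$ (rotations plus scalars) to normalize the column factor $u$. The paper packages these steps as Theorem~\ref{T:main5a}, giving explicit matrices rather than invoking Sylvester's law, and carries out the rotation step via the complex substitution $w=w_1+iw_2$ (so that $(\square+1)w=re^{i\theta}w^3$ becomes $(\square+1)(r^{1/2}e^{i\theta/2}w)=(r^{1/2}e^{i\theta/2}w)^3$), which is equivalent to your direct calculation that $D(R_\theta)|_{\nu_0^\perp}=R_{2\theta}$.
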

There is an explicit procedure of the reduction. The construction is divided into two steps.
\begin{theorem}[Reduction of systems in $Z_{1,-}$]\label{T:main5a}\ 
\begin{enumerate}
\item Suppose that $\rank A=1$ and  $\nu(A)= \ltrans{( a_0 , b_0 , c_0 )}\in S_2 \cup (-S_2)$.
If we define 
\[
	\begin{pmatrix} w_1 \\ w_2 \end{pmatrix}
	= \begin{pmatrix} 2c_0 & -b_0 \\ 0 &  \sqrt{4a_0c_0-b_0^2} \end{pmatrix} \begin{pmatrix} u_1 \\ u_2 \end{pmatrix}
\]
then $(w_1,w_2)$ solves
\begin{equation}\label{E:systh5}
\left\{
\begin{aligned}
&(\square + 1)w_1 
= r \cos \theta (w_1^2 -3 w_2^2)w_1 - r \sin \theta (3w_1^2 - w_2^2)w_2, \\
&(\square + 1)w_2 
= r \sin \theta (w_1^2-3w_2^2)w_1 + r \cos \theta (3w_1^2 - w_2^2)w_2 
\end{aligned}
\right.
\end{equation}
with some $r>0$ and $\theta \in \R/2\pi \Z$.
\item Suppose that $(w_1,w_2)$ solves \eqref{E:systh5}.
If we define
\[
	\begin{pmatrix} v_1 \\ v_2 \end{pmatrix}
	= \begin{pmatrix} r^{\frac12} \cos \frac{\theta}2 & -r^{\frac12} \sin \frac{\theta}2 \\  r^{\frac12} \sin \frac{\theta}2 &  r^{\frac12} \cos \frac{\theta}2 \end{pmatrix} \begin{pmatrix} w_1 \\ w_2 \end{pmatrix}
\]
then $(v_1,v_2)$ solves \eqref{E:sysnew2}.
\end{enumerate}
\end{theorem}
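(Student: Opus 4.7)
My plan is to exploit the complex reformulation of \eqref{E:systh5}. Setting $W = w_1 + i w_2$, we have $W^3 = (w_1^2 - 3 w_2^2) w_1 + i (3 w_1^2 - w_2^2) w_2$, and multiplying by $r e^{i\theta}$ recovers the nonlinearity in \eqref{E:systh5}, so that system is equivalent to $(\square + 1) W = r e^{i\theta} W^3$. With this at hand, Part (2) is immediate: the given matrix represents complex multiplication by $r^{1/2} e^{i \theta / 2}$, i.e., $V := v_1 + i v_2 = r^{1/2} e^{i\theta/2} W$. Applying $(\square + 1)$ yields
\begin{equation*}
(\square + 1) V = r^{1/2} e^{i\theta/2} \cdot r e^{i\theta} W^3 = \bigl( r^{1/2} e^{i\theta/2} \bigr)^3 W^3 = V^3,
\end{equation*}
which, split into real and imaginary parts, is precisely \eqref{E:sysnew2}.

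For Part (1), I use the transformation formula of Theorem \ref{T:classA} with $M$ equal to the given lower-triangular matrix. Since $\nu(A) \in S_2 \cup (-S_2)$ we have $4 a_0 c_0 - b_0^2 > 0$ and hence $c_0 \neq 0$, so $M \in GL_2(\R)$. Let $A'$ denote the transformed matrix. By the invariance of rank, $\rank A' = 1$, and since $A' \in Z$ we have $\tr A' = 0$. The main step is to pin down the one-dimensional row space of $A'$, or equivalently the direction of $\nu(A')$. Since $\ker A' = D(M)\, \ker A$ from \eqref{E:Apform}, a short computation shows $\nu(A')^T \propto \nu(A)^T D(M)^{-1}$, where $D(M)^{-1}$ can be obtained either by direct inversion of the lower-triangular matrix $D(M)$ or by the multiplicativity $D(M)^{-1} = D(M^{-1})$. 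Substituting \eqref{E:DMform} and using $\alpha^2 = 4 a_0 c_0 - b_0^2$ with $\alpha := \sqrt{4a_0c_0 - b_0^2}$, a direct calculation gives
\begin{equation*}
(a_0, b_0, c_0)\, D(M)^{-1} = \tfrac{1}{2} \sqrt{4 a_0 c_0 - b_0^2}\, (1, 0, 1),
\end{equation*}
so $\nu(A') \propto \ltrans{(1, 0, 1)}$.

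Together with $\rank A' = 1$ and $\tr A' = 0$, this forces $A' = \ltrans{(v_1, v_2, -v_1)} (1, 0, 1)$ for some $(v_1, v_2) \in \R^2 \setminus \{(0,0)\}$. Comparing with the $A$-matrix of \eqref{E:systh5} computed via \eqref{E:defA}, which equals $\ltrans{(-3 r \sin\theta, -3 r \cos\theta, 3 r \sin\theta)} (1, 0, 1)$, we read off $v_1 = -3 r \sin\theta$ and $v_2 = -3 r \cos\theta$. These uniquely determine $r = \tfrac{1}{3} \sqrt{v_1^2 + v_2^2} > 0$ and $\theta \in \R / 2\pi \Z$, completing Part (1). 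The main technical obstacle is the bookkeeping in the row-vector computation of $\nu(A)^T D(M)^{-1}$, but the lower-triangular form of both $M$ and $D(M)$ keeps it manageable, with the key cancellations coming from the identity $\alpha^2 = 4 a_0 c_0 - b_0^2$.
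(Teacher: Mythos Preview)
Your proof is correct and follows the same conceptual line as the paper: for Part (2) you use exactly the paper's complex reformulation $(\square+1)W = re^{i\theta}W^3$ and the multiplication by $r^{1/2}e^{i\theta/2}$; for Part (1) both you and the paper reduce the claim to showing that the transformed matrix $A'$ has $\nu(A') \propto \ltrans{(1,0,1)}$, and then read off $r,\theta$ from the column vector $\mathbf{d}$ in the factorization $A'=\mathbf{d}\,\ltrans{\nu(A')}$ (Proposition~\ref{P:dnuA}).

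The one execution difference worth noting is in how $\nu(A')$ is computed. The paper decomposes $M$ as a product of elementary matrices of types \eqref{E:M1}--\eqref{E:M3} and tracks $\nu(A)$ through each factor; because that decomposition involves division by $b_0$, a separate argument is needed when $b_0=0$. You instead apply the closed formula \eqref{E:DM-1form} directly to the given $M$ and compute $(a_0,b_0,c_0)D(M)^{-1}$ in one stroke, which works uniformly and avoids the case split. (A trivial slip: the matrix $M$ in the statement is upper-triangular, not lower-triangular, though $D(M)$ is indeed lower-triangular.) Both routes are short; yours is marginally cleaner, while the paper's decomposition makes more transparent how the particular $M$ was found.
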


\begin{example}
We consider a specific system 
\begin{equation}\label{E:NLKG1}
\left\{
\begin{aligned}
(\square + 1)u_1 
&=  a u_1^3 +3a u_1^2 u_2+ 3 u_1u_2^2 +  u_2^3, \\
(\square + 1)u_2 
&= a^2 u_1^3 + 3a u_1^2u_2 + 3a u_1u_2^2 + u_2^3, 
\end{aligned}
\right.
\end{equation}
where $a \in \R$, as an example
and demonstrate how we reduce the system by applying our main theorems.
First of all, the corresponding matrix $A$ defined by \eqref{E:defA} is
\[
	\begin{pmatrix}
	3a & 0 & -3a^2 \\
	3 & 0 & -3a \\
	3 & 0 & -3a
	\end{pmatrix}.
\]
Hence, $A\in Z_1$ and
\[
	\nu(A)= \tfrac{1}{\sqrt{a^2+1}}\ltrans{(1 , 0 , -a)}
	\in \begin{cases}
	S_1 & a>0 \\
	\mathcal{D} &  a= 0 \\
	S_2 & a<0.
	\end{cases}
\]
By means of Theorems \ref{T:main1a}, \ref{T:main2a}, and \ref{T:main5a}, we have an explicit change of variable of the form
\eqref{E:linearchange}
which reduces the system into a corresponding model system.
First consider the case $a>0$. 
Let us now specify the change of unknowns given by Theorem \ref{T:main1a}.
We want to find $\theta_1$ and $\theta_2$ such that
\[
	\nu(A) = k \begin{pmatrix} \cos \theta_1(1-\sin \theta_2)-\cos \theta_2(1-\sin \theta_1)\\
	-2 (\sin \theta_1 -  \sin \theta_2)\\
	-\cos \theta_1 (1+\sin \theta_2)+\cos \theta_2 (1+\sin \theta_1)
	\end{pmatrix}
\]
holds for some $k\in \R$. Comparing the coordinates, one has
\[
	\theta_1 = \arcsin (\tfrac{a-1}{a+1}) \in (-\tfrac{\pi}2, \tfrac{\pi}2),\quad \theta_2 = \pi - \theta_1.
\]
Hence we see that
\begin{align*}
	v_1 := \sqrt{\tfrac{2a}{a+1}} u_1 + \sqrt{\tfrac{2}{a+1}} u_2,\quad
	v_2 := \sqrt{\tfrac{2a}{a+1}} u_1 - \sqrt{\tfrac{2}{a+1}} u_2
\end{align*}
are the new unknowns given by the theorem. 
A direct computation shows that the introduction of these new unknowns reduces
the system \eqref{E:NLKG1} into
\begin{equation*}
\left\{
\begin{aligned}
(\square + 1)v_1&= \tfrac{a+1}2 (1+\sqrt{a})v_1^3,\\
(\square + 1)v_2&= \tfrac{a+1}2 (1-\sqrt{a})v_2^3.
\end{aligned}
\right.
\end{equation*}
Next consider the case $a=0$.
Recall that $\nu(A) \in \mathcal{D}$.
We will apply Theorem \ref{T:main2a}.
One has $\theta=3\pi/2$ and $\sigma=1$. $A$ is written as
\[
	A = 	\begin{pmatrix}
	0 & 0 & 0 \\
	3 & 0 & 0 \\
	3 & 0 & 0
	\end{pmatrix}
	= \(\tfrac{3}{4} \cdot \begin{pmatrix} 0 \\ 0 \\ 2 \end{pmatrix} + \tfrac32 \cdot \begin{pmatrix} 0 \\ 1 \\ 0 \end{pmatrix}\)
	\begin{pmatrix} 2 & 0 & 0 \end{pmatrix} 
\]
and so $k=3/4$ and $\ell=3/2$. We define
\[
	v_1 = \sqrt{2} u_2 , \quad v_2 = \sqrt{2} (\tfrac32 u_1 + \tfrac{3}4 u_2).
\]
Then, $(v_1,v_2)$ solves \eqref{E:systh21} with $\ell=3/2$. We further define
\[
	\tilde{v}_1 := \sqrt{|\ell|} v_1 = u_2,\quad
	\tilde{v}_2 := \sqrt{|\ell|} v_2 = \tfrac34 (2u_1+u_2).
\]
Then, $(\tilde{v}_1,\tilde{v}_2)$ solves \eqref{E:sysnewa} with $\l=1$.
Finally, let us consider the case $a<0$.
Recall that $\nu (A) \in S_2$.
We apply Theorem \ref{T:main5a}. Since $\nu(A) = \tfrac1{\sqrt{a^2+1}} \ltrans{(1 , 0 , -a)}$, we
define
\[
	w_1 = \tfrac{2|a|}{\sqrt{a^2+1}} u_1, \quad w_2 = \tfrac{2 |a|^{\frac12}}{\sqrt{a^2+1}} u_2.
\]
Then, $(w_1,w_2)$ solves
\begin{equation*}
\left\{
\begin{aligned}
(\square + 1)w_1 
&= \tfrac{a^2+1}{4|a|} ( -(w_1^2- 3w_2^2)w_1  -|a|^{1/2} (3w_1^2-w_2^2) w_2 ), \\
(\square + 1)w_2 
&= \tfrac{a^2+1}{4|a|} (  |a|^{1/2} (w_1^2 - 3 w_2^2)w_1 - (3 w_1^2-w_2^2) w_2).
\end{aligned}
\right.
\end{equation*}
Further, we take $r= \frac{(a^2+1)\sqrt{1+|a|}}{4|a|}$ and $\theta = \arctan (-|a|^{1/2}) + \pi \in (\tfrac12\pi, \pi)$
and let
\begin{align*}
	v_1 &{}= r^{\frac12}( w_1\cos \tfrac\theta2  - w_2 \sin \tfrac\theta2 ) = \tfrac{1}{\sqrt2 }(|a|^{\frac12}(\sqrt{1+|a|}-1)^{\frac12}u_1 - (\sqrt{1+|a|}+1)^{\frac12} u_2) ,\\
	v_2 &{}= r^{\frac12}( w_1 \sin \tfrac\theta2  + w_2 \cos \tfrac\theta2 )
	= \tfrac{1}{\sqrt2 }(|a|^{\frac12}(\sqrt{1+|a|}+1)^{\frac12}u_1 + (\sqrt{1+|a|}-1)^{\frac12} u_2) .
\end{align*}
Then, $(v_1,v_2)$ solves \eqref{E:sysnew2}.
\end{example}

\subsubsection{The case $\rank A=2$}
We move on to the reduction of systems in $Z_2$.

\begin{theorem}[Reduction of systems in $Z_2$]\label{T:main3a}
Suppose that \eqref{E:main3cond} holds. 
If $\l_2^2 < 4\l_1 \l_3$ then 
\[
	v_1 = \sqrt{|\l_1|} u_1 + \tfrac{\sign(\l_1)\l_2}{2\sqrt{|\l_1|}} u_2
\quad
\text{and} 
\quad
	v_2=  \tfrac{\sqrt{4\l_1\l_3-\l_2^2}}{2\sqrt{|\l_1|}} u_2
\]
solve
\begin{subequations}
\begin{equation}\label{E:systh31}
\left\{
\begin{aligned}
&(\square + 1)v_1 
= \sign (\l_1) (v_1^2 + v_2^2)v_1, \\
&(\square + 1)v_2 
= \sign (\l_1) (v_1^2+v_2^2)v_2.
\end{aligned}
\right.
\end{equation}
In particular, $w=v_1 + i v_2$ solves
$(\square +1)w = \sign (\l_1) |w|^2 w$.
If $\l_2^2 = 4\l_1 \l_3$ then 
\[
	v_1 = 
	\begin{cases}
	\sqrt{|\l_1|} u_1 + \sign (\l_1 \l_2) \sqrt{|\l_3|} u_2 & \l_1 \neq0, \\
	\sqrt{|\l_3|} u_2 & \l_1 = 0
	\end{cases}
\qquad
\text{and}
\qquad v_2=u_1
\]
solve
\begin{equation}\label{E:systh32}
\left\{
\begin{aligned}
&(\square + 1)v_1 
= \tilde{\l} v_1^3, \\
&(\square + 1)v_2 
= \tilde{\l} v_1^2v_2,
\end{aligned}
\right.
\end{equation}
where $\tilde{\l} = \sign (\l_1)$ if $\l_1\neq 0$ and $\tilde{\l} = \sign (\l_3)$ if $\l_1=0$.
If $\l_2^2 > 4\l_1 \l_3$ then 
\[
	v_1 = 
	\begin{cases}
	\sqrt{|\l_1|} u_1 + \tfrac{\sign(\l_1)\l_2}{2\sqrt{|\l_1|}} u_2 & \l_1 \neq0, \\
	u_1 + \frac{2\l_3+\l_2^2}{4\l_2} u_2 & \l_1 = 0
	\end{cases}
\]
and
\[
	v_2 = 
	\begin{cases}
	\tfrac{\sqrt{\l_2^2-4\l_1\l_3}}{2\sqrt{|\l_1|}} u_2 & \l_1 \neq0, \\
	u_1 + \frac{2\l_3-\l_2^2}{4\l_2} u_2 & \l_1 = 0
	\end{cases}
\]
solve
\begin{equation}\label{E:systh33}
\left\{
\begin{aligned}
&(\square + 1)v_1 
= \tilde{\l} (v_1^2-v_2^2)v_1, \\
&(\square + 1)v_2 
= \tilde{\l} (v_1^2-v_2^2)v_2,
\end{aligned}
\right.
\end{equation}
\end{subequations}
where $\tilde{\l} = 1$ if $\l_1\ge 0$ and $\tilde{\l} = -1$ if $\l_1<0$.
\end{theorem}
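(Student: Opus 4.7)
The plan rests on one structural observation: under the condition \eqref{E:main3cond} the system takes the factored, gauge-like form
\[
(\square + 1)\begin{pmatrix} u_1\\ u_2\end{pmatrix}
= Q(u_1,u_2)\begin{pmatrix} u_1\\ u_2 \end{pmatrix},
\qquad Q(u_1,u_2) := \l_1 u_1^2 + \l_2 u_1 u_2 + \l_3 u_2^2,
\]
because $\l_4=\l_5=0$ and the same quadratic factor multiplies $u_1$ in the first equation and $u_2$ in the second. Since $Q(u)$ is scalar, this form is preserved under any linear change of variables $v=Mu$: one has $(\square+1)v = M\bigl(Q(u)u\bigr) = Q(u)\,v = \tilde Q(v)\,v$ with $\tilde Q := Q\circ M^{-1}$. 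In particular, classifying systems satisfying \eqref{E:main3cond} up to the equivalence \eqref{E:linearchange} is equivalent to classifying the single binary real quadratic form $Q$ under the standard $GL_2(\R)$-action on its variables.

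By elementary linear algebra, nonzero real binary quadratic forms fall into three $GL_2(\R)$-orbits, distinguished by the sign of the discriminant $\Delta := \l_2^2 - 4\l_1\l_3$: the definite forms ($\Delta<0$), the rank-one forms ($\Delta=0$), and the indefinite rank-two forms ($\Delta>0$). These three cases match the three branches of the statement, and each canonical form of $Q$ produces precisely the corresponding model system \eqref{E:systh31}, \eqref{E:systh32}, or \eqref{E:systh33}.

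The core of the proof is therefore to exhibit in each case an explicit substitution that brings $Q$ into canonical form. The standard maneuvers suffice: when $\Delta<0$ (so $\l_1\neq0$), completing the square in $u_1$ gives
\[
Q = \l_1\Bigl(u_1+\tfrac{\l_2}{2\l_1}u_2\Bigr)^{\!2} + \tfrac{4\l_1\l_3-\l_2^2}{4\l_1}u_2^2,
\]
where the second coefficient has the same sign as $\l_1$; normalizing the two square roots and extracting $\sign(\l_1)$ yields $Q=\sign(\l_1)(v_1^2+v_2^2)$ with the $v_1,v_2$ stated in the theorem. When $\Delta=0$, $Q$ is (up to sign) the square of a linear form, giving $Q = \tilde\l\, v_1^2$, and $v_2$ is chosen to complete $v_1$ to a basis. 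When $\Delta>0$, $Q$ factors over $\R$ as a product of two linearly independent real linear forms; writing them as $v_1\pm v_2$ with appropriate normalization produces $Q=\tilde\l(v_1^2-v_2^2)$. Substituting these normalizations into the system and using $(\square+1)v_j = Q\cdot v_j$ immediately delivers the stated models.

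No serious obstacle is expected. The only point requiring care is the degenerate subcase $\l_1=0$ in the cases $\Delta=0$ and $\Delta>0$: the generic formulas involving $1/\sqrt{|\l_1|}$ lose meaning, which is exactly why the statement lists separate ad hoc expressions for $v_1,v_2$ in those subcases (when $\l_1=0$ one has $Q = u_2(\l_2 u_1+\l_3 u_2)$, already exhibiting the factorization required in the $\Delta>0$ branch, while $\Delta=0$ forces $\l_2=0$ and $Q=\l_3 u_2^2$). Verifying that each listed formula realizes the claimed reduction of $Q$ is a routine algebraic identity; collecting these verifications with the generic computation above completes the proof.
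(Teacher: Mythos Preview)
Your proposal is correct and follows essentially the same approach as the paper: observe that under \eqref{E:main3cond} the system factors as $(\square+1)u_j = V\,u_j$ with $V=\l_1u_1^2+\l_2u_1u_2+\l_3u_2^2$, note that this form is preserved by any linear change of variables, and reduce to the classical $GL_2(\R)$-classification of binary quadratic forms by the sign of the discriminant. The paper's own proof is in fact shorter than yours, simply invoking ``the theory of quadratic form'' without writing out the completing-the-square details you supplied.
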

As seen in the proof below, this is a simple consequence of the theory of quadratic form.
The system \eqref{E:systh31} and \eqref{E:systh32} correspond to \eqref{E:syscomplex} and \eqref{E:sysnewb}, respectively.
By a further application of \eqref{E:linearchange} with \eqref{E:M2} if necessary, the system \eqref{E:systh33} corresponds to \eqref{E:sysnew3}.

\subsection{Results on the large time asymptotic behavior for new model systems}
We turn to the study of the asymptotic behavior of solutions to the following systems:
We give an asymptotic behavior of solutions to the system
\begin{equation}\label{E:sysnew}
\left\{
\begin{aligned}
&(\square + 1)u_1 
= \l_1 u_1^3, \\
&(\square + 1)u_2 
= \l_6 u_1^2u_2
\end{aligned}
\right.
\end{equation}
in the two specific choices $\l_6 = 3 \l_1\neq 0$ and  $\l_6 = \l_1\neq0$.
This is a unified notation of \eqref{E:sysnewa} and \eqref{E:sysnewb}.
We use the unified notation because these two systems are handled quite similarly.

On the other hand,
we do not have a similar result on the rest two systems
\begin{equation}\tag{\ref{E:sysnew2}}
\left\{
\begin{aligned}
&(\square + 1)u_1 
= (u_1^2-3u_2^2)u_1 , \\
&(\square + 1)u_2 
= (3u_1^2-u_2^2)u_2
\end{aligned}
\right.
\end{equation}
and
\begin{equation}\tag{\ref{E:sysnew3}}
\left\{
\begin{aligned}
&(\square + 1)u_1 
= (u_1^2-u_2^2)u_1, \\
&(\square + 1)u_2 
= (u_1^2-u_2^2)u_2.
\end{aligned}
\right.
\end{equation}
Nevertheless, we have an explicit solution for the corresponding limit ODE system.

\subsubsection{Asymptotic behavior of solutions to \eqref{E:sysnew}}

We consider initial value problem of \eqref{E:sysnew}.
Remark that the second equation of \eqref{E:sysnew} is linear with respect to $u_2$.
Hence, we do not need any smallness on the data of $u_2$.
To emphasize this respect, we work with the data of the form
\begin{equation}\label{E:ICprime}
	(u_1(0),\partial_t u_1(0),u_2(0),\partial_t u_2(0)) = (\eps u_{1,0}, \eps u_{1,1}, u_{2,0}, u_{2,1}).
\end{equation}
Remark that this is a merely an issue of notation.

Let us state the assumption on the initial data. 
\begin{assumption}\label{A:init}
Suppose that $u_{j,0} \in H^{15}(\R)$ and $u_{j,1} \in H^{14}(\R)$ for $j=1,2$.
Further, suppose that $u_{j,0}$ and $u_{j,1}$ ($j=1,2$) are real-valued and compactly supported.
Let $B>0$ be the number such that
\begin{equation}\label{E:support}
\supp u_{j,0}\cup\ \supp u_{j,1}
\subset\{x\in\R\ |\ |x|\le B\}
\end{equation}
holds for $j=1,2$.
\end{assumption}

\begin{theorem}\label{T:main4} 
Suppose $\l_6=3\l_1 \neq 0$ or $\l_6=\l_1 \neq 0$ in \eqref{E:sysnew}.
Suppose Assumption \ref{A:init}. 
Then, for any $\delta>0$ there exists $\eps_0>0$ such that if $\eps\le \eps_0$
then there exists a unique global solution 
$u_{j} \in C([0,\infty);H^{15}(\R))\cap C^1([0,\infty);H^{14}(\R))$ to \eqref{E:sysnew} with \eqref{E:ICprime}. 
Furthermore, there exist
$\Phi_1, \Phi_2 \in (C^1 \cap W^{1,\I})(\R) \cap C^{5}((-1,1))$ with $\supp \Phi_1, \supp \Phi_2 \subset [-1,1]$
such that
\begin{align*}
\notag u_1(t,x) =& \frac{2}{t^{1/2}}
\Re\Bigg[\Phi_1\left(\frac{x}{t}\right)
\exp\left(i\sqrt{t^2-|x|^2}
+i\Psi\left(\frac{x}{t}\right)\log t\right)
\Bigg]
+ O\left( t^{-\frac{3}{2}+\delta}\right),\\
\notag u_2(t,x) =&
\frac{2}{t^{1/2}}
\Re\Bigg[\left\{\tilde{\Psi}\left(\frac{x}{t}\right)\log \sqrt{t^2-|x|^2}+\Phi_2\left(\frac{x}{t}\right)\right\}
\exp\left(i\sqrt{t^2-|x|^2}
+i\Psi\left(\frac{x}{t}\right)\log t\right)
\Bigg]\\
&+ O\left( t^{-\frac{3}{2}+\delta}\right)
\end{align*}
holds in $L^\I_x(\R)$
as $t \to \infty$, where 
\begin{align*} 
\Psi(y) &{}=-\frac32\lambda_1\sqrt{1-|y|^2}|\Phi_1(y)|^2,\\
\tilde{\Psi}(y)&{}=
\left\{
\begin{aligned}
&-\frac32i\lambda_1\sqrt{1-|y|^2}
\left(\left|\Phi_1\left(y\right)\right|^2\Phi_2\left(y\right)
+\Phi_1\left(y\right)^2\overline{\Phi}_2\left(y\right)\right), & &\l_6= 3\l_1, \\
&\frac{i}{2}\lambda_1\sqrt{1-|y|^2}
\left(\left|\Phi_1\left(y\right)\right|^2\Phi_2\left(y\right)
-\Phi_1\left(y\right)^2\overline{\Phi}_2\left(y\right)\right), & & \l_6=\l_1.
\end{aligned}
\right.
\end{align*}
It holds that $|\Phi_1|^2, |\Phi_2|^2 \in C^{5} \cap W^{5,\I}(\R)$, $\Psi \in C^3 \cap W^{3,\I}(\R)$, and $\tilde{\Psi} \in C^5 \cap W^{5,\I}(\R)$.
Further, 
\[
	|\Phi_1(y)| + |\Phi_2(y)| = o(|1-y^2|^{\frac32} )
\]
as $|y|\to1$.
\end{theorem}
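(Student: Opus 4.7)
The plan is to reduce the PDE system to the corresponding limit ODE system for appropriately defined profile functions, solve those ODEs explicitly, and then justify the asymptotic rigorously by a vector-field energy estimate with a small $t^\delta$ loss. Since the data are compactly supported, finite speed of propagation confines the solution to $\{|x|\le t+B\}$, and inside the forward light cone I pass to hyperbolic coordinates $(\tau,\theta)$ with $t=\tau\cosh\theta$ and $x=\tau\sinh\theta$, under which $\square=\partial_\tau^2+\tau^{-1}\partial_\tau-\tau^{-2}\partial_\theta^2$. The WKB ansatz $u_j=\tau^{-1/2}(\alpha_j e^{i\tau}+\overline{\alpha_j}e^{-i\tau})$ transforms \eqref{E:sysnew}, after extracting the $e^{i\tau}$ resonant part of the right-hand side, into
\begin{align*}
2i\partial_\tau\alpha_1&=\tfrac{3\lambda_1}{\tau}|\alpha_1|^2\alpha_1,\\
2i\partial_\tau\alpha_2&=\tfrac{\lambda_6}{\tau}\bigl(2|\alpha_1|^2\alpha_2+\alpha_1^2\overline{\alpha_2}\bigr),
\end{align*}
modulo non-resonant and lower-order terms; this is precisely the concrete instance of the limit ODE system \eqref{E:limitODE} for this choice of coefficients.

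The first equation decouples, and because $\partial_\tau|\alpha_1|^2=0$, it is solved exactly by $\alpha_1=\phi_1(\theta)\exp(-\tfrac{3i\lambda_1}{2}|\phi_1|^2\log\tau)$, immediately yielding the announced form of $u_1$ with $\Psi(y)=-\tfrac{3}{2}\lambda_1\sqrt{1-|y|^2}|\Phi_1(y)|^2$ after the rescaling $\phi_1=(1-y^2)^{1/4}\Phi_1$, the identity $\tau^{-1/2}=t^{-1/2}(1-y^2)^{-1/4}$, and $\log\tau=\log t+\tfrac12\log(1-y^2)$ (the latter absorbed into $\Phi_1$). Substituting this $\alpha_1$ into the second equation and setting $\alpha_2=\beta\exp(-\tfrac{3i\lambda_1}{2}|\phi_1|^2\log\tau)$ turns it, in the variable $s=\log\tau$, into a linear constant-coefficient $2\times 2$ system for $(\beta,\overline{\beta})$ whose coefficient matrix has determinant proportional to $(\lambda_6-\lambda_1)(\lambda_6-3\lambda_1)$. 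Thus precisely in the two resonant cases $\lambda_6\in\{\lambda_1,3\lambda_1\}$ this matrix is \emph{nilpotent} (rank one, trace zero), forcing $\beta$ to grow \emph{linearly} in $s$: $\beta(s,\theta)=\phi_2(\theta)+A(\theta)s$. A direct computation of $A(\theta)$ as a bilinear expression in $(\phi_1,\phi_2,\overline{\phi_1},\overline{\phi_2})$, followed by the same rescaling, reproduces exactly the two branches of $\tilde\Psi(y)$ announced in the theorem.

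To make this analysis rigorous I would first apply Delort's theorem~\cite{Del} (or the refinements~\cite{LS,Sti}) to the decoupled first equation to obtain $u_1$ globally together with sharp pointwise bounds on $u_1$ and its iterated Lorentz derivatives. Given $u_1$, the second equation is \emph{linear} in $u_2$, so I would run a bootstrap based on a modified higher-order energy $\sum_{|a|\le 15}\|\Gamma^a u_2(t)\|_{L^2}\lesssim t^{C\delta}$ using the Klein--Gordon vector fields $\Gamma\in\{\partial_t,\partial_x,L\}$ with $L=x\partial_t+t\partial_x$, upgraded to pointwise decay by the Klainerman--Sobolev inequality. Introducing the profile $\alpha_2(\tau,\theta):=\tfrac{\tau^{1/2}}{2i}e^{-i\tau}(\partial_\tau-i)u_2$ and performing a normal-form change of variable to remove the non-resonant phases, one shows that $\alpha_2$ obeys the reduced ODE up to an $L^\infty$ error of size $O(\tau^{-1-\delta})$; integration along each ray $\theta=\mathrm{const}$ then produces the explicit $\beta(s,\theta)$ above, which translates into the stated asymptotic profile for $u_2$.

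The principal obstacle is that $u_2$ itself grows logarithmically in time, so no naive linear energy estimate closes: the long-range contributions $2|\alpha_1|^2\alpha_2$ and $\alpha_1^2\overline{\alpha_2}$ must be simultaneously absorbed into a phase/amplitude-corrected energy, with the leftover slack soaked up by the $t^\delta$ loss available thanks to the arbitrariness of $\delta$. A secondary obstacle is the vanishing statement $|\Phi_1(y)|+|\Phi_2(y)|=o(|1-y^2|^{3/2})$ near $|y|=1$, which does not follow from the ODE analysis alone and has to be extracted from an exterior estimate propagating the compact support of the data through each Lorentz derivative of $u_j$, exploiting that $u_j$ vanishes outside $\{|x|\le t+B\}$ and behaves smoothly across the light cone.
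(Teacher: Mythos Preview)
Your formal ODE analysis is correct, and the nilpotency observation is an elegant alternative to what the paper does. The paper does not write the $2\times2$ system for $(\beta,\overline\beta)$; instead it exploits that in each case one of the quantities $\Re(\overline{\alpha_1}\alpha_2)$ (when $\lambda_6=3\lambda_1$) or $\Im(\overline{\alpha_1}\alpha_2)$ (when $\lambda_6=\lambda_1$) is conserved for the limit ODE, substitutes this constant into the equation for $\alpha_2$, and integrates directly. Your determinant computation explains conceptually \emph{why} exactly these two ratios are singled out, which the paper's argument does not make transparent.

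On the rigorous side there are two substantive differences. First, the paper does not use Klainerman vector fields at all: it follows Delort's weighted hyperbolic-coordinate scheme, setting $u_j=\tau^{-1/2}(\cosh\kappa z)^{-1}U_j$ with a fixed $\kappa>7/2$ (specifically $\kappa=701/200$), and runs a hierarchy of $z$-energy norms $E_m[U_j]$ directly on the transformed equation. The logarithmic growth of $u_2$ is absorbed simply by allowing a $\tau^{2\delta}$ loss in the energy of $U_2$ on top of the $\tau^\delta$ loss for $U_1$; no phase-corrected energy is needed. Your proposed vector-field bootstrap would presumably also close, but the paper's route is more streamlined for this particular structure.

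Second, and more importantly, the boundary vanishing $|\Phi_j(y)|=o(|1-y^2|^{3/2})$ is not obtained from any ``exterior estimate'' as you suggest; it falls out automatically from the $\cosh\kappa z$ weight. Since $\cosh z\sim(1-y^2)^{-1/2}$ under $y=\tanh z$, the built-in factor $(\cosh\kappa z)^{-1}$ in the definition of $\Phi_j$ gives decay $(1-y^2)^{\kappa/2-1/4}$, and the choice $\kappa>7/2$ makes this exponent exceed $3/2$. This is the real reason the paper needs $\kappa>7/2$ and hence $H^{15}$ data. In your framework this decay would not appear for free, and extracting it from compact support alone (via smoothness across the light cone) is not straightforward once you have already passed to the profile; you would likely end up reintroducing a weight equivalent to the paper's.
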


\begin{remark}
By Theorem \ref{T:main2}, one sees that the system \eqref{E:sysSunagawa} can be regarded as a limiting case of \eqref{E:sysnewa}.
Indeed, the behavior of solution is similar in these two cases. Especially,
it is common that the second component has a \emph{logarithmic amplitude correction} denoted by $\tilde{\Psi}$.
The difference is as follows:
The behavior of solutions to \eqref{E:sysnew2} involves a logarithmic phase correction term, i.e. we have $\Psi\neq0$.
Further, the logarithmic amplitude correction depends not only on $\Phi_1$ but also on $\Psi_2$.
This reflects the difference of the mechanism of appearance of logarithmic amplitude correction
(see Remark \ref{R:lac}).
\end{remark}

\subsubsection{Solutions to the limit ODE systems corresponding to \eqref{E:sysnew2} and \eqref{E:sysnew3}}

We next consider the new systems \eqref{E:sysnew2} and \eqref{E:sysnew3}.
We do not have a result on the asymptotic behavior of a solution to the systems.
Nevertheless, we have an explicit solution for the corresponding limit ODE system
\begin{equation}\label{E:newODE1}
\left\{
\begin{aligned}
	&2i\frac{\partial \alpha_1}{\partial s}=
	3 |\a_1|^2\a_1-3(2\a_1|\a_2|^2 +\ol{\a_1}\a_2^2),\\
	&2i\frac{\partial \alpha_2}{\partial s}=
	3 (2|\a_1|^2\a_2+\a_1^2\ol{\a_2})-3|\a_2|^2\a_2,
\end{aligned}
\right.
\end{equation}
and
\begin{equation}\label{E:newODE2}
\left\{
\begin{aligned}
	&2i\frac{\partial \alpha_1}{\partial s}=
	3 |\a_1|^2\a_1-(2\a_1|\a_2|^2 +\ol{\a_1}\a_2^2),\\
	&2i\frac{\partial \alpha_2}{\partial s}=
	(2|\a_1|^2\a_2+\a_1^2\ol{\a_2})-3|\a_2|^2\a_2,
\end{aligned}
\right.
\end{equation}

\begin{theorem}\label{T:main6}
\begin{enumerate}
\item
The solution to the ODE system \eqref{E:newODE1} is 
\begin{align*}
	\alpha_1(s) ={}& c_{+} \exp \(- 6i c_+ \overline{c_-} s\) + c_{-} \exp(- 6i \overline{c_+}c_- s),
	\\
	\alpha_2(s) ={}& -i c_{+} \exp(-6 i  c_+ \overline{c_-}  s) + i c_{-} \exp(- 6i \overline{c_+}c_- s ),	
\end{align*}
where
\[
	c_{\pm}  = \frac{\alpha_1(0) \pm i \alpha_2(0)}2.
\]
Moreover, $|\alpha_1(s)|^2-|\alpha_2(s)|^2$ and $2 \Re (\overline{\alpha_1(s)}\alpha_2(s))$ are
independent of $s$, and are equal to the real part and the imaginary part of $4 c_+ \overline{c_-}$, respectively.
\item The solution to the ODE system \eqref{E:newODE2} is 
\begin{align*}
	\alpha_1(s) ={}& c'_{+} \exp(-2 i( c'_+ \overline{c'_-} + 2 \overline{c'_+}c'_-) s) + c'_{-} \exp(- i(2 c'_+ \overline{c'_-} +  \overline{c'_+}c'_-) s),
	\\
	\alpha_2(s) ={}& c'_{+} \exp(-2 i( c'_+ \overline{c'_-} + 2 \overline{c'_+}c'_-) s) - c'_{-} \exp(- i(2 c'_+ \overline{c'_-} +  \overline{c'_+}c'_-) s),
\end{align*}
where
\[
	c'_{\pm }  = \frac{\alpha_1(0) \pm \alpha_2(0)}2.
\]
Moreover, $|\alpha_1(s)|^2-|\alpha_2(s)|^2$ and $2 \Im (\overline{\alpha_1(s)}\alpha_2(s))$ are
independent of $s$, and are equal to  the real part and the imaginary part of
$4\overline{c'_+}c'_-$, respectively.
\end{enumerate}
\end{theorem}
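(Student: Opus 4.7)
My plan is to verify Theorem~\ref{T:main6} by reducing each ODE system to a pair of decoupled scalar equations through an explicit linear change of unknowns, after which a conservation law makes these equations linear with constant coefficients and explicitly solvable. For part (1), I introduce $\beta_\pm(s):=(\alpha_1(s)\pm i\alpha_2(s))/2$, so that $c_\pm=\beta_\pm(0)$. The motivation is that $w=u_1+iu_2$ converts \eqref{E:sysnew2} into $(\square+1)w=w^3$ via the identity $(u_1+iu_2)^3=(u_1^2-3u_2^2)u_1+i(3u_1^2-u_2^2)u_2$. Substituting $\alpha_1=\beta_++\beta_-$ and $\alpha_2=-i(\beta_+-\beta_-)$ into \eqref{E:newODE1}, I expect the cubic monomials to undergo massive cancellations, leaving
\[
2i\partial_s\beta_+ = 12\,\beta_+^2\overline{\beta_-}=12(\beta_+\overline{\beta_-})\beta_+,\qquad 2i\partial_s\beta_- = 12\,\overline{\beta_+}\beta_-^2=12(\overline{\beta_+}\beta_-)\beta_-.
\]
For part (2), I introduce $\beta'_\pm(s):=(\alpha_1(s)\pm\alpha_2(s))/2$, motivated by the factorization $u_1^2-u_2^2=(u_1+u_2)(u_1-u_2)$ in \eqref{E:sysnew3}; the analogous substitution into \eqref{E:newODE2} should produce
\[
2i\partial_s\beta'_+ = 4(\beta'_+\overline{\beta'_-}+2\overline{\beta'_+}\beta'_-)\beta'_+,\qquad 2i\partial_s\beta'_- = 4(2\beta'_+\overline{\beta'_-}+\overline{\beta'_+}\beta'_-)\beta'_-.
\]

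The key observation is that $\beta_+\overline{\beta_-}$ (resp.\ $\beta'_+\overline{\beta'_-}$) is conserved along the reduced flow: differentiating and substituting the reduced equations, the contributions from $(\partial_s\beta_+)\overline{\beta_-}$ and $\beta_+(\partial_s\overline{\beta_-})$ cancel identically, so the product equals its initial value $c_+\overline{c_-}$ (resp.\ $c'_+\overline{c'_-}$) for all $s$. Consequently the coefficients on the right-hand sides above are constants, each reduced equation becomes a linear scalar ODE $2i\partial_s\beta_\pm=K_\pm\beta_\pm$ with $K_\pm\in\C$, and its unique solution is the exponential $\beta_\pm(s)=c_\pm\exp(-iK_\pm s/2)$. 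Inverting the change of variables via $\alpha_1=\beta_++\beta_-$, $\alpha_2=-i(\beta_+-\beta_-)$ (resp.\ $\alpha_1=\beta'_++\beta'_-$, $\alpha_2=\beta'_+-\beta'_-$) yields exactly the formulas asserted in the theorem.

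The conservation statements and their identifications with the real and imaginary parts of $4c_+\overline{c_-}$ (resp.\ $4\overline{c'_+}c'_-$) then follow from the direct identity $4c_+\overline{c_-}=(|\alpha_1|^2-|\alpha_2|^2)+2i\,\Re(\overline{\alpha_1}\alpha_2)$, obtained by expanding $(\alpha_1+i\alpha_2)(\overline{\alpha_1}+i\overline{\alpha_2})$, together with its analogue for $4\overline{c'_+}c'_-$; both sides of each identity are then constant in $s$ by the conservation of $\beta_+\overline{\beta_-}$ (resp.\ $\beta'_+\overline{\beta'_-}$). The main obstacle is the algebraic bookkeeping in the reduction step: each of the six cubic monomials $|\beta_+|^2\beta_+$, $|\beta_-|^2\beta_-$, $|\beta_+|^2\beta_-$, $\beta_+|\beta_-|^2$, $\beta_+^2\overline{\beta_-}$, $\overline{\beta_+}\beta_-^2$ receives contributions from all of the nonlinear terms on the right-hand sides of \eqref{E:newODE1} and \eqref{E:newODE2}, and the cancellations---especially the vanishing of the self-interaction terms $|\beta_+|^2\beta_+$ and $|\beta_-|^2\beta_-$---depend delicately on the specific coefficient structure of these limit ODEs, reflecting their explicit integrability. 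Once these cancellations are carried out and the conservation law is established, the remainder of the argument is mechanical.
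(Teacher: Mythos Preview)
Your approach is correct and takes a genuinely different route from the paper's proof. The paper first invokes the two real conserved quantities $\beta_1=|\alpha_1|^2-|\alpha_2|^2$ and $\beta_2=2\Re(\overline{\alpha_1}\alpha_2)$ (resp.\ $2\Im(\overline{\alpha_1}\alpha_2)$) supplied by the general theory of \S\ref{subsec:conserved_quantity}--\S\ref{subsec:another_conservation} to rewrite \eqref{E:newODE1} (resp.\ \eqref{E:newODE2}) as a \emph{linear} system with constant coefficients in the original variables $\alpha_1,\alpha_2$, then removes the diagonal part via $\tilde\alpha_j=\alpha_j e^{3i\beta_1 s/2}$, and only afterwards diagonalizes with $(\tilde\alpha_1\pm i\tilde\alpha_2)/2$. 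You instead pass immediately to the diagonalizing variables $\beta_\pm=(\alpha_1\pm i\alpha_2)/2$, motivated by the PDE reduction $w=u_1+iu_2$ for \eqref{E:sysnew2}, obtain a decoupled but still nonlinear pair $2i\partial_s\beta_+=12\beta_+^2\overline{\beta_-}$, $2i\partial_s\beta_-=12\overline{\beta_+}\beta_-^2$, and then observe that the single complex quantity $\beta_+\overline{\beta_-}$ is conserved. Your complex conservation law packages the paper's two real ones via $4\beta_+\overline{\beta_-}=\beta_1+i\beta_2$, so the two arguments are equivalent in content. Your route is somewhat more self-contained, since it does not appeal to the abstract conservation framework of \S5; on the other hand, the paper's route has the virtue of showing exactly how that framework, which is one of the conceptual threads of the paper, drives the integration. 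The algebraic reduction you flag as the ``main obstacle'' does go through with the coefficients you state (the $|\beta_+|^2\beta_+$ and $|\beta_-|^2\beta_-$ self-interaction terms do cancel), so there is no gap.
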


Although the systems \eqref{E:newODE1} and \eqref{E:newODE2} are \emph{nonlinear} ODE systems with a constant coefficients, their solution is a linear combination of exponential functions, which is a
typical solution to \emph{linear} ODE systems.
This is due to the special structure of the nonlinearity.
These systems have two quadratic conserved quantities, and further, the nonlinearity is 
written as a linear combination of product of the quadratic conserved quantity and an unknown.
As a result, the nonlinearity acts like a linear term.
Note that this gimmick is essentially same as in the ODE
\[
	2i \frac{d \alpha}{d s} = 3|\alpha|^2 \alpha
\]
which appears in the study of single cubic nonlinear Klein-Gordon equation \eqref{E:cNLKG}.
The difference compared with the solution to this equation is that the exponential factors have nonzero real part and 
hence modulus of the both components of the solution grow exponentially.

\begin{remark}\label{R:conjecture}
The analysis of the ODE systems would be a key step to find asymptotic behavior to the other two new systems \eqref{E:sysnew2} and \eqref{E:sysnew3}.
In view of Theorem \ref{T:main6}, we obtain a conjecture on the behavior:
A class of small solution $(u_1,u_2)$ of \eqref{E:sysnew2} may behave like
\begin{align*} 
 u_1(t,x) =& \tfrac{1}{t^{1/2}}
\Big[ \Re\left[\Phi_+\left(\tfrac{x}{t}\right)
\exp\left(i\sqrt{t^2-|x|^2}
-i\Psi\left(\tfrac{x}{t}\right)\log \sqrt{t^2-|x|^2}\right)
\right] \\
&\qquad+ \Re\left[\Phi_-\left(\tfrac{x}{t}\right)
\exp\left(i\sqrt{t^2-|x|^2}
-i\ol{\Psi\left(\tfrac{x}{t}\right)}\log \sqrt{t^2-|x|^2}\right)
\right]\Big]
+ o\left( t^{-\frac{1}{2}}\right),\\
 u_2(t,x) =& \tfrac{1}{t^{1/2}}
\Big[ \Im\left[\Phi_+\left(\tfrac{x}{t}\right)
\exp\left(i\sqrt{t^2-|x|^2}
-i\Psi\left(\tfrac{x}{t}\right)\log \sqrt{t^2-|x|^2}\right)
\right] \\
&\qquad - \Im\left[\Phi_-\left(\tfrac{x}{t}\right)
\exp\left(i\sqrt{t^2-|x|^2}
-i\ol{\Psi\left(\tfrac{x}{t}\right)}\log \sqrt{t^2-|x|^2}\right)
\right]\Big]
+ o\left( t^{-\frac{1}{2}}\right)
\end{align*}
in $L^\I_x(\R)$ as $t \to \infty$, where $\Phi_\pm$ is suitable complex-valued functions and
\begin{align*} 
\Psi(y) :=\tfrac32 \sqrt{1-|y|^2}\Phi_+(y) \ol{\Phi_-(y)}.
\end{align*}
A class of small solution $(u_1,u_2)$ of \eqref{E:sysnew3} may behave like
\begin{align*} 
 u_1(t,x) =& \tfrac{1}{t^{1/2}}
\Big[ \Re\left[\Phi_+\left(\tfrac{x}{t}\right)
\exp\left(i\sqrt{t^2-|x|^2}
+i\Psi\left(\tfrac{x}{t}\right)\log \sqrt{t^2-|x|^2}\right)
\right] \\
&\qquad + \Re\left[\Phi_-\left(\tfrac{x}{t}\right)
\exp\left(i\sqrt{t^2-|x|^2}
+i\ol{\Psi\left(\tfrac{x}{t}\right)}\log \sqrt{t^2-|x|^2}\right)
\right]\Big]
+ o\left( t^{-\frac{1}{2}}\right),\\
 u_2(t,x) =& \tfrac{1}{t^{1/2}}
\bigg[ \Re\left[\Phi_+\left(\tfrac{x}{t}\right)
\exp\left(i\sqrt{t^2-|x|^2}
+i\Psi\left(\tfrac{x}{t}\right)\log \sqrt{t^2-|x|^2}\right)
\right] \\
&- \Re\left[\Phi_-\left(\tfrac{x}{t}\right)
\exp\left(i\sqrt{t^2-|x|^2}
+i\ol{\Psi\left(\tfrac{x}{t}\right)}\log \sqrt{t^2-|x|^2}\right)
\right]\Big]
+ o\left( t^{-\frac{1}{2}}\right)
\end{align*}
holds in $L^\I_x(\R)$
as $t \to \infty$, where $\Phi_\pm$ is suitable complex-valued functions and
\begin{align*} 
\Psi(y) :=-\tfrac12 \sqrt{1-|y|^2}(  \Phi_+(y) \ol{\Phi_-(y)} + 2\ol{\Phi_+(y)} \Phi_-(y)).
\end{align*}
Remark that in the both cases, $\Psi$ is a complex-valued function and hence the decay rates
of $u_1$ and $u_2$ are different from that of the linear solutions by a \emph{polynomial order}.
\end{remark}

\subsection{A summary}
As mentioned above, the classification results can be summarized as $Z_1/{\sim} = \{ [\eqref{E:sysdecouple}]$, $[\eqref{E:sysSunagawa}]$, $[\eqref{E:sysnewa}]$, $[\eqref{E:sysnew2}] \}$ and $Z_2/{\sim} = \{ [\eqref{E:syscomplex}]$, $[\eqref{E:sysnewb}]$, $[\eqref{E:sysnew3}]\}$ with an abuse of notation.
The model systems for $Z_1$ and $Z_2$ are divided into three groups as shown in Table \ref{tab:group}.
\begin{table}[bh]
\centering
\caption{Classification of model systems}
\begin{tabular}{|c||c|c|c|}
\hline
Subset of $Z$ & Group 1 & Group 2 & Group 3 \\
\hline 
$Z_1$
 & \eqref{E:sysdecouple} & \eqref{E:sysSunagawa} and \eqref{E:sysnewa} & \eqref{E:sysnew2} \\
\hline
$Z_2$
 & \eqref{E:syscomplex} & \eqref{E:sysnewb} & \eqref{E:sysnew3} \\
\hline
\end{tabular}
\label{tab:group}
\end{table}
Firstly, small good solutions to the systems in Group 1 involve logarithmic phase correction. It was well-known.
As for the systems in Group 2, the behavior of solutions have a logarithmic amplitude correction.
This kind of behavior was studied for \eqref{E:sysSunagawa}.
We show in Theorem \ref{T:main4} that the other two systems \eqref{E:sysnewa} and \eqref{E:sysnewb} also admit a solution with the same kind of behavior.
Further, Theorem \ref{T:main2a} reveals that the system \eqref{E:sysSunagawa} is rather a limit case of \eqref{E:sysnewa}.
Although the appearance of a logarithmic amplitude correction is common in these systems, 
the mechanism of the appearance is slightly different (see Remark \ref{R:lac}).
Another difference is that the behavior of solutions to \eqref{E:sysnewa} and \eqref{E:sysnewb} also involves a logarithmic phase correction.
Theorem \ref{T:main6} suggests that the systems in Group 3 may admit a solution with a \emph{polynomial} amplitude correction (see Remark \ref{R:conjecture}).
However, it is not proven yet.

\bigskip

The rest of the paper is organized as follows.
We first prove Theorem \ref{T:classA} and collect several preliminary results
in Section \ref{S:Theorem1}.
By exploiting them,
we prove our reduction results on $Z_1$, i.e., Theorems  \ref{T:main1a}, \ref{T:main2}, \ref{T:main2a}, \ref{T:main5}, and \ref{T:main5a} in Section \ref{S:main1}.
Section \ref{S:main3} is devoted to the proof of Theorems \ref{T:main3} and \ref{T:main3a}.
Then, we turn to the study of asymptotic behavior.
In Section \ref{S:pre}, we collect preliminary results for the study of large time behavior of solutions to the system \eqref{E:sys} with a general nonlinearity.
In particular, the derivation of the limit ODE system is discussed in Subsection \ref{subsec:limitODEsys}.
We study conserved quantities for the limit ODE system in Subsections \ref{subsec:conserved_quantity} and \ref{subsec:another_conservation}.
Then, we prove Theorem \ref{T:main4} in Section \ref{S:main4}.
Finally, Theorem \ref{T:main6} is shown in Section \ref{S:main6}.

\section{Proof of Theorem \ref{T:classA}}\label{S:Theorem1}

\subsection{Explicit formula of an equivalent matrix}

Recall the equivalence relation $\sim$ between two elements of $Z$ is defined via
change of unknowns \eqref{E:linearchange}.

\begin{lemma}\label{L:Dhomo}
The map $GL_2(\R)\ni M \mapsto D(M) \in SL_3(\R)$ defined by \eqref{E:DMform}
 is a group homomorphism,
i.e., $D(M_1M_2)= D(M_1)D(M_2)$ is true for any $M_1,M_2 \in GL_2(\R)$.
In particular,
it holds that $D(M^{-1})=D(M)^{-1}$. Further,
\begin{equation}\label{E:DM-1form}
	D(M)^{-1} =\frac1{\det M}\begin{pmatrix} a^2 & 2ac & c^2 \\ ab & ad+ bc & cd \\ b^2 & 2bd & d^2 \end{pmatrix} \in SL_3(\R).
\end{equation}
\end{lemma}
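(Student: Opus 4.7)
My plan is to identify $D$ as the matrix of a natural group representation so that the homomorphism property becomes a functoriality statement rather than a brute-force calculation. Specifically, I would first verify that
\[
D(M)^{-1} = \frac{1}{\det M}\, \operatorname{Sym}^2(M^T),
\]
where $\operatorname{Sym}^2(N)$ denotes the matrix, in the basis $(e_1^2, e_1 e_2, e_2^2)$ of $\operatorname{Sym}^2 \R^2$, of the symmetric square of the $\R^2$-action of $N \in GL_2(\R)$. A direct expansion of $(Ne_i)(Ne_j)$ gives
\[
\operatorname{Sym}^2(N) = \begin{pmatrix} a^2 & 2ab & b^2 \\ ac & ad+bc & bd \\ c^2 & 2cd & d^2 \end{pmatrix}
\qquad\text{for }N=\begin{pmatrix} a & b \\ c & d \end{pmatrix},
\]
and substituting $M^T$ for $N$ reproduces exactly \eqref{E:DM-1form}; the same formula is also obtainable by direct substitution of $M^{-1} = (\det M)^{-1}\begin{pmatrix} d & -b \\ -c & a \end{pmatrix}$ into the defining expression \eqref{E:DMform} for $D$.

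With this identification in place, the homomorphism property is immediate from the functoriality $\operatorname{Sym}^2(AB) = \operatorname{Sym}^2(A) \operatorname{Sym}^2(B)$, combined with $(M_1 M_2)^T = M_2^T M_1^T$ and the multiplicativity of $\det$. Indeed,
\begin{align*}
D(M_1 M_2)^{-1} &= \frac{1}{\det(M_1 M_2)} \operatorname{Sym}^2\bigl((M_1 M_2)^T\bigr) \\
&= \frac{1}{\det M_1 \det M_2}\, \operatorname{Sym}^2(M_2^T)\, \operatorname{Sym}^2(M_1^T) \\
&= D(M_2)^{-1} D(M_1)^{-1},
\end{align*}
so $D(M_1 M_2) = D(M_1) D(M_2)$. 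Specializing to $M_2 = M^{-1}$ and using $D(I) = I$ (which is visible from the defining formula) then yields $D(M^{-1}) = D(M)^{-1}$, consistent with \eqref{E:DM-1form}.

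To confirm that $D(M) \in SL_3(\R)$, I would use that the eigenvalues of $\operatorname{Sym}^2(N)$ are the products $\mu_i \mu_j$ with $i \le j$ of the eigenvalues of $N$, so that $\det \operatorname{Sym}^2(N) = (\det N)^3$; hence $\det D(M)^{-1} = (\det M)^{-3}(\det M)^3 = 1$.

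The main obstacle is pinning down the correct variant of the symmetric square representation—i.e.\ the precise placement of the transpose, the inverse, and the $\det$-twist—so that it genuinely matches \eqref{E:DMform}. Once this is checked by comparing a column or two of $\operatorname{Sym}^2(M^T)$ with \eqref{E:DM-1form}, the rest is automatic. A purely computational alternative, multiplying $D(M_1)$ and $D(M_2)$ entry by entry and matching with $D(M_1 M_2)$, is equally valid but requires verifying nine cubic polynomial identities in the eight entries of $M_1, M_2$, which is precisely the sort of computation the conceptual approach is designed to bypass.
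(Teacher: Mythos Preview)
Your approach is correct and takes a genuinely different route from the paper. The paper simply asserts that $D(M_1M_2)=D(M_1)D(M_2)$ is ``easy to verify by definition'' (i.e., the direct entry-by-entry computation you mention at the end) and then obtains \eqref{E:DM-1form} by substituting the explicit $2\times 2$ inverse into \eqref{E:DMform}. Your route---recognizing $D$ as a twist of the symmetric-square representation---replaces the nine polynomial identities by the single functorial fact $\operatorname{Sym}^2(AB)=\operatorname{Sym}^2(A)\operatorname{Sym}^2(B)$, and as a bonus yields $\det D(M)=1$ via $\det\operatorname{Sym}^2(N)=(\det N)^3$. The conceptual version also explains \emph{why} such a homomorphism exists, which the paper's verification does not; the paper does, however, note afterwards that $D(M)^{-1}$ is the matrix appearing in the classical invariant theory of quadratic polynomials, which is essentially your observation.

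One small logical wrinkle to tighten: you begin by ``verifying'' $D(M)^{-1}=(\det M)^{-1}\operatorname{Sym}^2(M^T)$, but the two checks you offer---matching the display \eqref{E:DM-1form} and substituting $M^{-1}$ into \eqref{E:DMform}---only establish that $(\det M)^{-1}\operatorname{Sym}^2(M^T)$ equals $D(M^{-1})$, not $D(M)^{-1}$. Invoking $D(M^{-1})=D(M)^{-1}$ at that stage would be circular. The clean fix is to identify $D(M)$ itself: substituting $(M^{-1})^T=(\det M)^{-1}\left(\begin{smallmatrix} d & -c\\ -b & a\end{smallmatrix}\right)$ into your $\operatorname{Sym}^2$ formula and multiplying by $\det M$ reproduces \eqref{E:DMform} on the nose, giving $D(M)=\det M\cdot\operatorname{Sym}^2\bigl((M^{-1})^T\bigr)$. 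The homomorphism computation then runs exactly as you wrote (with $D(M)$ in place of $D(M)^{-1}$), and \eqref{E:DM-1form} becomes the consequence $D(M)^{-1}=D(M^{-1})$ rather than the starting point.
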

\begin{proof}
It is easy to verify 
$D(M_1M_2)= D(M_1)D(M_2)$ for any $M_1,M_2 \in GL_2(\R)$ by definition.
The formula \eqref{E:DM-1form} follows, for instance, from the identity $D(M)^{-1}=D(M^{-1})$ and the explicit formula for an inverse of $2\times2$ matrix.
\end{proof}

\begin{remark}
The matrix $D(M)^{-1}$ appears in the classical invariant theory of \emph{quadratic polynomials}
(see \cite{OlBook}, for instance).
\end{remark}

\begin{proof}[Proof of Theorem \ref{T:classA}]
We first consider the case where the change between $A$ and $A'$ is given by the specific 
changes \eqref{E:M1}, \eqref{E:M2}, or \eqref{E:M3}.
The validity of \eqref{E:Apform} is directly checked by applying the corresponding change of variables to the system \eqref{E:sys}.
If 
$M$ is given by \eqref{E:M1} then
\begin{subequations}
\begin{equation}\label{E:At1}
	A' = \frac1{pq} \begin{pmatrix} q/p & 0 & 0 \\ 0 & 1 & 0 \\ 0 & 0 & p/q \end{pmatrix}
	A  \begin{pmatrix} q/p & 0 & 0 \\ 0 & 1 & 0 \\ 0 & 0 & p/q \end{pmatrix}^{-1}.
\end{equation}
If $M$ is given by \eqref{E:M2} then
\begin{equation}\label{E:At2}
	A' = \frac1{-1} \begin{pmatrix} 0 & 0 & -1 \\ 0 & -1 & 0 \\ -1 & 0 & 0 \end{pmatrix}
	A \begin{pmatrix} 0 & 0 & -1 \\ 0 & -1 & 0 \\ -1 & 0 & 0 \end{pmatrix}^{-1}.
\end{equation}
If $M$ is given by \eqref{E:M3} then
\begin{equation}\label{E:At3}
	A' = \frac{1}1 \begin{pmatrix} 1 & 0 & 0 \\ -1 & 1 & 0 \\ 1 &- 2 & 1 \end{pmatrix}
	A \begin{pmatrix} 1 & 0 & 0 \\ -1 & 1 & 0 \\ 1 & -2 & 1 \end{pmatrix}^{-1}.
\end{equation}
\end{subequations}

The general case follows from the above three cases and Lemma \ref{L:Dhomo}.
Indeed, if $M\in GL_2(\R)$ is written as $M=M_1M_2$ with $M_1,M_2 \in GL_2(\R)$ and
if the formula \eqref{E:Apform} is true for $M_1$ and $M_2$
then
\begin{align*}
	A' ={}& \frac1{\det M_1} D(M_1) \( \frac1{\det M_2} D(M_2) A D(M_2)^{-1} \) D(M_1)^{-1}\\
	={}&\frac1{\det M_1 \det M_2} D(M_1) D(M_2) A (D(M_1) D(M_2))^{-1}\\
	={}&\frac1{\det (M_1M_2)} D(M_1M_2) A D(M_1M_2)^{-1},
\end{align*}
which means \eqref{E:Apform} is true for $M=M_1M_2$.
\end{proof}

\subsection{Preliminary results on rank one matrices in $Z$}

\begin{proposition}\label{P:dnuA}
Let $A \in Z$.
$\rank A =1 $ if and only if $A$ is written as
\[
	A = {\bf d} \ltrans{\nu(A)}
\]
with some nonzero vector ${\bf d} \in \nu(A)^\perp$, where $\nu(A)$ is given by \eqref{E:defnu}.
\end{proposition}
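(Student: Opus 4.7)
The proof will hinge on the standard outer-product description of rank-one matrices combined with the trace-zero condition built into $Z$.

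For the forward direction, I would start from the assumption $\rank A = 1$: the image of $A$ is one-dimensional, so I can choose a nonzero vector ${\bf d}_0$ spanning it. Every column of $A$ is then a scalar multiple of ${\bf d}_0$, which yields an outer-product decomposition $A = {\bf d}_0 \ltrans{{\bf c}}$ for some nonzero ${\bf c} \in \R^3$. The kernel of such a matrix is precisely ${\bf c}^\perp$, and since by definition $W(A) = \nu(A)^\perp$, the vectors ${\bf c}$ and $\nu(A)$ must be proportional. Absorbing the proportionality constant into ${\bf d}_0$ produces a nonzero vector ${\bf d}$ with $A = {\bf d} \ltrans{\nu(A)}$.

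The remaining requirement ${\bf d} \in \nu(A)^\perp$ is where the structural hypothesis $A \in Z$ enters. Since $\tr A = 0$ and
\[
\tr({\bf d} \ltrans{\nu(A)}) = \ltrans{\nu(A)}\, {\bf d} = \nu(A) \cdot {\bf d},
\]
the trace-zero condition immediately forces $\nu(A) \cdot {\bf d} = 0$, i.e., ${\bf d} \in \nu(A)^\perp$, as required. The reverse direction is then trivial: any nonzero outer product ${\bf d} \ltrans{\nu(A)}$ of two nonzero vectors has rank one.

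I do not foresee a real obstacle here; the statement is essentially a repackaging of the elementary fact that rank-one matrices are outer products, together with the observation that the trace-zero constraint defining $Z$ forces the image direction to lie in the kernel hyperplane. The one point worth emphasizing in the write-up is precisely this interplay: without the $\tr A = 0$ condition, a general rank-one matrix would not satisfy ${\bf d} \perp \nu(A)$, so the proposition genuinely exploits the definition of $Z$.
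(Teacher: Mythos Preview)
Your argument is correct and matches the paper's proof essentially line for line: both obtain the outer-product form $A = {\bf d}\,\ltrans{\nu(A)}$ from the rank-one hypothesis and the definition of $\nu(A)$, then invoke $\tr A = \ltrans{\nu(A)}\,{\bf d} = 0$ to conclude ${\bf d}\in\nu(A)^\perp$, with the converse declared obvious. The only cosmetic difference is that the paper phrases the first step via row-echelon form rather than the image/kernel description you used.
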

\begin{proof}
The if-part is obvious. Let us show the only-if-part. 
Suppose that $\rank A=1$.
Since $A$ is transformed to a row-echelon form $\ltrans{\begin{pmatrix}\ltrans{\nu(A)}& \ltrans{\bf 0}& \ltrans{\bf 0}\end{pmatrix}}$,
$A$ is given by $A = {\bf d} \ltrans{\nu(A)}$ with some nonzero vector ${\bf d} \in \R^3$.
Further, since  
$
	\tr A= \ltrans{{\bf d}} \nu(A) = 0,
$
one has ${\bf d} \in \nu(A)^\perp$.
\end{proof}

\begin{corollary}\label{C:nuA}
Let $A , A' \in Z_1$. 
Let $ \nu(A) $ and $ \nu(A') $ be the vectors given by \eqref{E:defnu}.
If $A'$ is given from $A$ through
the change \eqref{E:linearchange} with $M \in GL_2(\R)$ then
\[
	\nu (A') = k \(\ltrans{D(M)^{-1}} \nu(A)\)
\]
holds with a normalizing factor $k\in \R \setminus \{0\}.$
\end{corollary}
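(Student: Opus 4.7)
The plan is to combine Proposition \ref{P:dnuA} with the explicit conjugation formula from Theorem \ref{T:classA}. First I would invoke Proposition \ref{P:dnuA} to represent the rank-one matrix $A$ as the outer product $A = {\bf d}\ltrans{\nu(A)}$ for some nonzero ${\bf d} \in \nu(A)^\perp$. Since Theorem \ref{T:classA} asserts that the rank is invariant under $\sim$, one has $A' \in Z_1$ too, so Proposition \ref{P:dnuA} applies to $A'$ as well, giving a decomposition $A' = {\bf d}' \ltrans{\nu(A')}$ for a nonzero ${\bf d}' \in \nu(A')^\perp$.

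Next I would substitute the factorization of $A$ into the transformation formula $A' = (\det M)^{-1} D(M) A D(M)^{-1}$ from Theorem \ref{T:classA} and regroup the resulting triple product. A direct rewriting yields
\begin{equation*}
A' \;=\; \bigl( (\det M)^{-1} D(M){\bf d} \bigr) \, \ltrans{\bigl(\ltrans{D(M)^{-1}} \nu(A)\bigr)},
\end{equation*}
which exhibits $A'$ as an outer product whose right factor is $\ltrans{D(M)^{-1}} \nu(A)$. Note that $D(M)^{-1}$ exists since $D(M) \in SL_3(\R)$ by \eqref{E:DMform}, so $\ltrans{D(M)^{-1}} \nu(A) \neq {\bf 0}$; likewise the left factor is nonzero.

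To conclude, I would compare this outer-product representation with the one produced by Proposition \ref{P:dnuA} applied to $A'$. Both representations have the same kernel, namely the hyperplane orthogonal to $\nu(A')$ in one case and to $\ltrans{D(M)^{-1}}\nu(A)$ in the other. Since $\nu(A')$ is uniquely determined by $A'$ up to a nonzero scalar (it spans the one-dimensional orthogonal complement of $\ker A'$), the two normal vectors must be proportional, which yields the desired identity $\nu(A') = k\bigl(\ltrans{D(M)^{-1}}\nu(A)\bigr)$ for some $k \in \R\setminus\{0\}$. The argument is essentially mechanical; there is no genuine obstacle, the only delicate point being to keep track of which factor of the outer product plays the role of $\nu$, and this is settled by the explicit regrouping above.
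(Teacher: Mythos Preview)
Your proof is correct and follows essentially the same route as the paper: factor $A$ via Proposition~\ref{P:dnuA}, substitute into the conjugation formula of Theorem~\ref{T:classA}, regroup into an outer product, and compare with the Proposition~\ref{P:dnuA} factorization of $A'$. Your added remarks on non-vanishing of the factors and on why the comparison pins down $\nu(A')$ up to scalar are helpful but not strictly necessary.
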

\begin{proof}
Combining Theorem \ref{T:classA} and Proposition \ref{P:dnuA}, we have
\[
	A' = \frac1{\det M} D(M) ( {\bf d} \ltrans{\nu(A)}) D(M)^{-1}
	=\(\frac1{\det M}D(M)  {\bf d}\) \ltrans{ \(\ltrans{D(M)^{-1}} \nu(A)\)}
\]
with some ${\bf d} \in \nu(A)^\perp$.
On the other hand, $A'$ has another representation $A' = {\bf d}' \ltrans{\nu(A')}$
with some ${\bf d}' \in \nu(A')^\perp$.
Comparing the two forms, we obtain the desired identity.
\end{proof}

\begin{proposition}\label{P:nuA}
Let $A , A' \in Z_1$. 
Let $ \nu(A) = \ltrans{( a , b , c )}$ and $ \nu(A') = \ltrans{( \tilde{a} , \tilde{b} , \tilde{c} )}$ be the vectors given by \eqref{E:defnu}.
If $A \sim A'$ then $\sign (b^2 - 4ac) = \sign (\tilde{b}^2 - 4\tilde{a}\tilde{c})$.
In particular, $Z_{1,+}$, $Z_{1,0}$, and $Z_{1,-}$ are invariant.
\end{proposition}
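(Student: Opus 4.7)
The plan is to reduce the claim to a classical invariance statement for binary quadratic forms via the representation supplied by Corollary~\ref{C:nuA}. By that corollary, there exist $M \in GL_2(\R)$ and $k \in \R \setminus \{0\}$ such that $\nu(A') = k \,\ltrans{D(M)^{-1}} \nu(A)$. The formula \eqref{E:DM-1form} for $D(M)^{-1}$ is explicit, so transposing it and multiplying by $(a,b,c)^{\ltrans{}}$ gives each of $\tilde a,\tilde b,\tilde c$ as an explicit polynomial in the entries of $M$ and in $a,b,c$. I would write these formulas out and then compute the discriminant $\tilde b^2 - 4\tilde a \tilde c$ directly.

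The crucial recognition at this point is that the action on $(a,b,c)$ induced by $\ltrans{D(M)^{-1}}$ is (up to the scalar $1/\det M$) exactly the substitution rule for the coefficients of the binary quadratic form $Q(x,y) = ax^2 + bxy + cy^2$ under the linear change of variables $(x,y) \mapsto \ltrans{M}(x,y)$. Indeed, expanding $a(px + ry)^2 + b(px+ry)(qx+sy) + c(qx+sy)^2$ with $M = \begin{pmatrix} p & q \\ r & s \end{pmatrix}$ reproduces exactly the three components of $\ltrans{D(M)^{-1}} (a,b,c)^{\ltrans{}}$ up to the prefactor. Since the discriminant of a binary quadratic form is known to transform as $\operatorname{disc}(\tilde Q) = (\det \ltrans{M})^2 \operatorname{disc}(Q) = (\det M)^2 (b^2 - 4ac)$, we obtain
\[
  \tilde b^2 - 4 \tilde a \tilde c \;=\; \Bigl(\frac{k}{\det M}\Bigr)^2 (\det M)^2 (b^2 - 4ac) \;=\; k^2 (b^2 - 4ac).
\]
Because $k^2 > 0$, this yields $\sign(\tilde b^2 - 4\tilde a \tilde c) = \sign(b^2 - 4ac)$.

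The second assertion follows immediately from the decomposition \eqref{E:cBdecomp}: a unit vector $\nu = (a,b,c)^{\ltrans{}}$ lies in $S_1$, in $\mathcal{D} \sqcup (-\mathcal{D})$, or in $S_2 \sqcup (-S_2)$ according as $b^2 - 4ac$ is positive, zero, or negative (the case $b^2 - 4ac < 0$ forces $ac > 0$, hence $a \neq 0$, so indeed $\nu \in S_2 \sqcup (-S_2)$). Since the sign of $b^2 - 4ac$ is an invariant of $[A]$, each of $Z_{1,+}$, $Z_{1,0}$, $Z_{1,-}$ is invariant.

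I do not anticipate a significant obstacle: the proof is essentially a direct computation, and the only conceptual ingredient is the classical discriminant-transformation identity for binary quadratic forms, which can alternatively be checked by brute expansion from the explicit formula \eqref{E:DM-1form} if one prefers to avoid invoking invariant theory. One should be mildly careful about the ambiguity in the sign of $\nu(A)$ (replacing $\nu$ by $-\nu$ leaves $b^2 - 4ac$ unchanged), but this is absorbed into the freedom in the constant $k$.
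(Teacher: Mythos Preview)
Your proof is correct. Both your argument and the paper's recognize that the action of $\ltrans{D(M)^{-1}}$ on $\ltrans{(a,b,c)}$ coincides (up to a scalar) with the substitution rule for the coefficients of the binary quadratic form $ax^2+bxy+cy^2$, and that $b^2-4ac$ is the discriminant of that form. The difference is purely in execution: you invoke (or propose to verify by direct expansion) the general identity $\operatorname{disc}(Q\circ \ltrans{M}) = (\det M)^2 \operatorname{disc}(Q)$ for arbitrary $M\in GL_2(\R)$, which yields $\tilde b^2-4\tilde a\tilde c = k^2(b^2-4ac)$ in one stroke; the paper instead reduces to the three generating matrices \eqref{E:M1}--\eqref{E:M3} and checks the invariance of $\sign(b^2-4ac)$ for each separately. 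Your route is shorter and more conceptual, while the paper's version is fully explicit without appealing to the discriminant formula---though, as you note, that formula is itself an elementary computation.
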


\begin{proof}
By Proposition \ref{P:dnuA}, $A={\bf d} \ltrans{\nu(A)}$ holds with some nonzero ${\bf d} \in \R^3$. 
If $A'$ is written as $A' = \kappa D A D^{-1}$
with $\kappa \in \R \setminus\{0\}$ and a matrix $D\in SL_3(\R)$ then one has
\[
	A' = (\kappa D{\bf d}) \ltrans{(\ltrans{D}^{-1} \nu(A))}.
\]
By Proposition \ref{P:dnuA}, this yields
\[
	\nu(A')=\ltrans{(\tilde{a} , \tilde{b} , \tilde{c})}=|(\ltrans{D^{-1}} \nu(A))|^{-1} (\ltrans{D^{-1}} \nu(A)).
\]

\begin{subequations}
The invariant property of $\sign (b^2 - 4ac) = \sign (\tilde{b}^2 - 4\tilde{a}\tilde{c})$ is 
a well-known property of $D^{-1}$ in the classical invariant theory of quadratic polynomials (see \cite{OlBook}, for instance).
Here we give a proof for reader's convenience.
Notice that it suffices to consider the case where the change between $A$ and $A'$ is given by
\eqref{E:M1}, \eqref{E:M2}, or \eqref{E:M3}.
Recall the formula \eqref{E:DM-1form}.
When
\begin{equation}\label{E:D1}
	D^{-1} = \begin{pmatrix} p/q & 0 & 0 \\ 0 & 1 & 0 \\ 0 & 0 & q/p \end{pmatrix},
\end{equation}
it holds that
$
	\ltrans{D^{-1}} \nu(A) = \ltrans{( \frac{p}{q} a ,  b , \frac{q}{p} c
	)}.
$
Since
$
	\tilde{b}^2 - 4\tilde{a}\tilde{c} = k(b^2-4ac)
$
with $k=|\ltrans{D^{-1}} \nu(A)|^{-2}>0$,
we have $\sign (b^2 - 4ac) = \sign (\tilde{b}^2 - 4\tilde{a}\tilde{c})$. 
When
\begin{equation}\label{E:D2}
	D^{-1} = \begin{pmatrix} 0 & 0 & -1 \\ 0 & -1 & 0 \\ -1 & 0 & 0 \end{pmatrix},
\end{equation}
it holds that
$
	\ltrans{D^{-1}} \nu(A) = \ltrans{( -c ,- b ,- a)}.
$
Hence, we have $\sign (b^2 - 4ac) = \sign (\tilde{b}^2 - 4\tilde{a}\tilde{c})$. 
When
\begin{equation}\label{E:D3}
	D^{-1} = \begin{pmatrix} 1 & 0 & 0 \\ 1 & 1 & 0 \\ 1 & 2 & 1 \end{pmatrix},
\end{equation}
it holds that
$
	\ltrans{D^{-1}} \nu(A) = \ltrans{( a +b + c, b + 2c , c
	)}.
$
Since 
\[
	\tilde{b}^2 - 4\tilde{a}\tilde{c} = k((b+2c)^2 - 4 (a+b+c) c) = k(b^2 -4ac)
\]
with $k=|\ltrans{D^{-1}} \nu(A)|^{-2}>0$,
we have $\sign (b^2 - 4ac) = \sign (\tilde{b}^2 - 4\tilde{a}\tilde{c})$. 
\end{subequations}
This completes the proof.
\end{proof}

\section{Proofs of classification theorems for $Z_1$}\label{S:main1}

In this section, we consider the case $\rank A =1$ and prove Theorems \ref{T:main1}, \ref{T:main1a}, \ref{T:main2}, \ref{T:main2a},
\ref{T:main5}, and \ref{T:main5a}.

\subsection{A good variable}\label{subs:good}
To begin with, we explain our key idea for the proof.
As mentioned in the introduction, the study of the limit ODE system helps us finding a
good variable to work with.
We will see in Subsection \ref{subsec:conserved_quantity} that if $\ltrans{(a,b,c)}$ is a solution to
\begin{equation}\label{E:Aeq2}
	A \begin{pmatrix}
a \\ b \\ c
\end{pmatrix} =0
\end{equation}
 then one has
\[
\frac{d}{dt}
\left(
a|\alpha_1|^2+2b\Re(\overline{\alpha}_1\alpha_2)+c|\alpha_2|^2
\right)
=0
\]
 for any solution to the ODE system \eqref{E:limitODE}.
If, in addition, the solution $\ltrans{(a,b,c)}$ belongs to 
\[
	V:=\{ \ltrans{(a,b,c)}\in \R^3 \ |\ b^2 - ac=0\}.
\]
then it reads as
\[
	\frac{d}{dt} ||a|^{1/2} \alpha_1 + \sign (ab) |c|^{1/2} \alpha_2|^2 = 0
\]
if $a \neq 0$ or
\[
	\frac{d}{dt} |\sign (bc)|a|^{1/2} \alpha_1 + |c|^{1/2} \alpha_2|^2 = 0.
\]
if $c \neq 0$. 
This suggests that
\[
	|a|^{1/2} \alpha_1 + \sign (ab) |c|^{1/2} \alpha_2 \quad \text{ or } \quad 
	\sign (bc)|a|^{1/2} \alpha_1 + |c|^{1/2} \alpha_2
\]
is a good variable since they have a constant modulus.
It will turn out that these variables are actually good to work with, not only for the ODE system
\eqref{E:limitODE} but also for the original system \eqref{E:sys}.

\subsection{Trichotomy}

When $\rank A=1$, there exist two linearly independent solutions to \eqref{E:Aeq2}.
Here we investigate the number of solutions which belong to $V$.
Note that $V$ is not a linear space since it is not closed with respect to sum of vectors.
On the other hand, it is closed with respect to scalar multiplication: 
if $\ltrans{(a_0,b_0,c_0)} \in V$ then $k\ltrans{(a_0,b_0,c_0)} \in V$ for all $k\in \R$.

We have the following:

\begin{theorem}[Number of the solutions in $V$]\label{T:rank1criteria}
Let $A \in Z$ satisfy $\rank A=1$. The following trichotomy is true:
\begin{enumerate}
\item if $\nu(A) \in S_1$ then
$\nu(A)^\perp\cap V = \{ k {\bf p}_1 \ |\ k \in \R \} \cup \{ k {\bf p}_2 \ |\ k \in \R \}$ for some ${\bf p}_j \in \mathcal{C}$ ($j=1,2$)
with ${\bf p}_1\neq{\bf p}_2$;
\item if $\nu(A) \in \mathcal{D} \cup (-\mathcal{D})$ then
$\nu(A)^\perp\cap V = \{ k {\bf p} \ |\ k \in \R \}$ for some ${\bf p} \in \mathcal{C}$;
\item If $\nu(A) \in S_2 \cup (-S_2)$ then $\nu(A)^\perp\cap V = \emptyset$,
\end{enumerate}
where $\mathcal{C}$, $\mathcal{D}$, $S_1$, and $S_2$ are defined by \eqref{E:defC}, \eqref{E:defD}, and \eqref{E:defS12},
respectively.
\end{theorem}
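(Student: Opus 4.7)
The strategy is to reduce the geometric problem to a classical discriminant computation for a binary quadratic form. Writing $\nu(A)=\ltrans{(a_0,b_0,c_0)}\in\mathcal{B}$, I would parametrize the plane $\nu(A)^\perp$ and intersect with the cone $V$.

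Assume first $c_0\neq 0$. Every element of $\nu(A)^\perp$ then takes the form $\ltrans{(a,b,-(a_0 a+b_0 b)/c_0)}$ with $(a,b)\in\R^2$, and the defining equation $b^2=ac$ of $V$ becomes the homogeneous equation
\[
c_0 b^2 + b_0 a b + a_0 a^2 = 0.
\]
If $a=0$ the only solution is $b=0$ (using $c_0\neq 0$), so every nontrivial solution line lies in $\{a\neq 0\}$; setting $t=b/a$ reduces the problem to the scalar quadratic
\[
c_0 t^2 + b_0 t + a_0 = 0,
\]
whose discriminant is exactly $b_0^2 - 4 a_0 c_0$. The case $a_0 \neq 0$ is symmetric. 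If $a_0 = c_0 = 0$ then $b_0\neq 0$, the plane $\nu(A)^\perp$ equals $\{b=0\}$, and $b^2 = ac$ factors into the two coordinate axes, consistent with $b_0^2 - 4 a_0 c_0 = b_0^2 > 0$ and the two-solution case.

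Next I would normalize the solutions to land in $\mathcal{C}$. Any nonzero $(a,b,c)\in V$ satisfies $ac=b^2 \ge 0$, so $a$ and $c$ share a sign (one possibly zero); multiplying by $-1$ if needed, we arrange $a,c \ge 0$ and rescale to unit length, obtaining a point of $\mathcal{C}$. Combined with the discriminant analysis this yields: two distinct points ${\bf p}_1,{\bf p}_2 \in \mathcal{C}$ when $b_0^2 - 4 a_0 c_0 > 0$, i.e., $\nu(A)\in S_1$; a single point ${\bf p}\in\mathcal{C}$ when $b_0^2 - 4 a_0 c_0 = 0$, i.e., $\nu(A)\in\mathcal{D}\cup(-\mathcal{D})$; and no nonzero solution when $b_0^2 - 4 a_0 c_0 < 0$, i.e., $\nu(A)\in S_2\cup(-S_2)$. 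Since $V$ is invariant under scalar multiplication, each normalized point ${\bf p}$ generates the entire solution line $\{k{\bf p}: k\in\R\}$, yielding the three cases stated in the theorem.

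The only subtlety, rather than a real obstacle, is handling the several parametrizations of $\nu(A)^\perp$ (depending on which of $a_0,b_0,c_0$ is nonzero) in a uniform manner. Proposition~\ref{P:nuA} already shows that $\sign(b_0^2 - 4a_0 c_0)$ is invariant under the equivalence relation $\sim$, which is both consistent with and provides additional motivation for the trichotomy, but only the direct computation above is needed for the present statement.
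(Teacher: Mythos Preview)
Your proof is correct and complete, but it takes a genuinely different route from the paper. The paper proceeds geometrically: it first proves two lemmas showing that $V\cap\mathcal{B}=\mathcal{C}\cup(-\mathcal{C})$ and that any geodesic tangent to $\mathcal{C}$ meets it only once, then identifies $\mathcal{D}$ as the curve of unit normals to the tangent planes of $\mathcal{C}$, and finally uses a continuity argument (the number of transverse intersections of a geodesic $\nu(A)^\perp\cap\mathcal{B}$ with the smooth closed curve $\mathcal{C}$ is locally constant in $\nu(A)$) together with a check at two sample points to settle the count on $S_1$ and $S_2$. Your approach is purely algebraic: after parametrizing the plane $\nu(A)^\perp$, the intersection with the cone $V$ reduces to a single binary quadratic whose discriminant is exactly $b_0^2-4a_0c_0$, so the trichotomy drops out of the quadratic formula.

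Your argument is shorter and more elementary, and it makes transparent why the quantity $b_0^2-4a_0c_0$ governs the trichotomy. The paper's argument, on the other hand, explains the geometric origin of the curve $\mathcal{D}$ as the dual (tangent-normal) curve to $\mathcal{C}$, which motivates its appearance elsewhere in the classification. Either route is perfectly acceptable here; yours has the advantage of avoiding the topological lemmas entirely.
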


Let $\mathcal{Q}$ be the quadrangular prism $\partial(\{  |a|+|c|\le1\} \cap \{ |b|\le1 \})$.
Recall that $V$ is closed with respect to scalar multiplication.
Hence, it suffices to characterize the restriction of $V$ onto $\mathcal{B}$ or $\mathcal{Q}$.
Let $P$ be a continuous bijection from $\mathcal{B}$ to $\mathcal{Q}$ such that $\mathcal{B} \ni {\bf v} \mapsto k {\bf v} \in \mathcal{Q}$ with suitable 
$k=k({\bf v}) >0$.

\begin{lemma}
There exists a simple closed smooth curve $\mathcal{C} \subset \mathcal{B}\cap \{ a\ge 0 , c\ge0 \}$ such that
$V\cap \mathcal{B} = \mathcal{C} \cup (-\mathcal{C})$.
Moreover, $\mathcal{C}$ is given by \eqref{E:defC}.
\end{lemma}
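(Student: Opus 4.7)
The plan is to first establish the decomposition $V\cap\mathcal{B}=\mathcal{C}\cup(-\mathcal{C})$ where $\mathcal{C}$ is the subset of $\mathcal{B}\cap\{a\ge 0,c\ge 0\}$ cut out by $b^{2}=ac$, and then to verify that the formula \eqref{E:defC} gives a smooth bijective parametrization of $\mathcal{C}$ by $\R/2\pi\Z$.

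First I would observe that the defining relation $b^{2}=ac$ on $V$ forces $ac\ge 0$, and that on $\mathcal{B}$ we cannot have $a=c=0$, because then $b=0$ as well, contradicting $a^{2}+b^{2}+c^{2}=1$. Hence at every point of $V\cap\mathcal{B}$ either $a,c\ge 0$ or $a,c\le 0$, and the only possible overlap of these two regions, namely $a=c=0$, is excluded. Defining $\mathcal{C}:=V\cap\mathcal{B}\cap\{a\ge 0,c\ge 0\}$, the antipodal map yields $-\mathcal{C}=V\cap\mathcal{B}\cap\{a\le 0,c\le 0\}$, and we conclude $V\cap\mathcal{B}=\mathcal{C}\cup(-\mathcal{C})$ immediately.

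Second, I would verify the parametrization \eqref{E:defC}. Write $\gamma(\theta)$ for the vector on the right-hand side. A direct computation using $(1+\sin\theta)^{2}+\cos^{2}\theta+(1-\sin\theta)^{2}=3+\sin^{2}\theta=(7-\cos 2\theta)/2$ gives $|\gamma(\theta)|^{2}=1$, and $(1+\sin\theta)(1-\sin\theta)=\cos^{2}\theta$ gives $b(\theta)^{2}=a(\theta)c(\theta)$; since $a(\theta),c(\theta)\ge 0$ by inspection, one has $\gamma(\theta)\in\mathcal{C}$. For the reverse inclusion, any $(a,b,c)\in\mathcal{C}$ satisfies $a+c>0$ (as noted above), so the formulas $\sin\theta:=(a-c)/(a+c)$ and $\cos\theta:=2b/(a+c)$ determine a unique $\theta\in\R/2\pi\Z$; and the identity $\sin^{2}\theta+\cos^{2}\theta=1$ is equivalent to $(a+c)^{2}-(a-c)^{2}=4b^{2}$, i.e.\ $b^{2}=ac$, which holds by assumption. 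Hence $\gamma\colon\R/2\pi\Z\to\mathcal{C}$ is a bijection.

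Finally, $\gamma$ is smooth since $7-\cos 2\theta\ge 6>0$, and the explicit smooth inverse just constructed shows that $\gamma'(\theta)\ne 0$ everywhere; consequently $\mathcal{C}$ is a smoothly embedded simple closed curve in $\mathcal{B}\cap\{a\ge 0,c\ge 0\}$. I do not anticipate any real obstacle here: the assertion reduces to a disjunction argument for the decomposition and a trigonometric verification for the parametrization. The only mild point of care is that the inversion $\sin\theta=(a-c)/(a+c)$, $\cos\theta=2b/(a+c)$ requires $a+c>0$, which is precisely what the exclusion of the origin from $\mathcal{B}$ guarantees.
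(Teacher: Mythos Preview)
Your proof is correct and in fact more direct than the paper's. The paper introduces an auxiliary quadrangular prism $\mathcal{Q}=\partial(\{|a|+|c|\le 1\}\cap\{|b|\le 1\})$ together with the radial bijection $P:\mathcal{B}\to\mathcal{Q}$; it first identifies $\tilde{\mathcal{C}}:=P(\mathcal{C})$ as the ellipse $(a-\tfrac12)^2+b^2=\tfrac14$ on the face $\{a+c=1\}$, reads off a parametrization of $\tilde{\mathcal{C}}$, and only then pulls it back to obtain \eqref{E:defC}. The decomposition $V\cap\mathcal{B}=\mathcal{C}\cup(-\mathcal{C})$ is handled in the paper by a separate case analysis on the sign of $c$. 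By contrast, you obtain the decomposition in one stroke from $ac\ge 0$ and the exclusion of $a=c=0$, and you check the parametrization is a bijection directly on the sphere via the inverse formulas $\sin\theta=(a-c)/(a+c)$, $\cos\theta=2b/(a+c)$. For the purposes of this lemma alone your argument is shorter; the paper's detour through $\mathcal{Q}$ pays off in the \emph{next} lemma, where the fact that $\tilde{\mathcal{C}}$ is a convex planar curve makes the tangency statement for geodesics immediate.
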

\begin{proof}
First consider the set $\tilde{\mathcal{C}}:=V \cap \{ a+c = 1 , a\ge 0 , |b|\le 1 ,c\ge0\} \subset \mathcal{Q}$.
An element of the set $\{ a+c = 1 , a\ge 0 , |b|\le 1 ,c\ge0\}$ is written as
$
	(a,b,1-a) 
$
with $a \in [0,1]$ and $ |b| \le  1$.
The element belongs $V$ if and only if
\[
	b^2 = a(1-a) \Leftrightarrow (a-\tfrac12)^2 + b^2 = (\tfrac12)^2 .
\]
Hence, $\tilde{\mathcal{C}}$ is an ellipse, a simple closed curve, on $\mathcal{Q}$.
Further, $\tilde{\mathcal{C}}$ is given by
\[
	 \left\{ (\tfrac12(1 + \sin\theta), \tfrac12 \cos \theta, \tfrac12(1 - \sin \theta)) \ \middle| \ \theta \in [0,2\pi]\right\}
\]
for instance.
Since $P^{-1}: \mathcal{Q}\to \mathcal{B}$ is a continuous bijection,
we see that $\mathcal{C}:= P^{-1} \tilde{\mathcal{C}}=V\cap \mathcal{B} \cap\{a\ge0,c\ge0\}$ is also a simple closed curve in $\mathcal{B}$.
The parametrization \eqref{E:defC} of $\mathcal{C}$ is easily constructed from that of $\tilde{\mathcal{C}}$ above.
By the explicit formula \eqref{E:defC}, one sees that $\mathcal{C}$ is a smooth curve.

Remark that $\mathcal{C}\cap \{c=0\} = \{(1,0,0)\}$.
It is easy to see that $V\cap \mathcal{B} \cap\{a<0,c\ge0\}=\{(-1,0,0)\} = -\mathcal{C}\cap\{c\ge0\}$.
Hence, 
\begin{align*}
	V\cap \mathcal{B} \cap\{c\ge0\} ={}& ((V\cap \mathcal{B} \cap\{ a\ge0,\, c\ge0\} ) \cap \{ c\ge0 \} ) \cup
	(V\cap \mathcal{B} \cap\{ a<0,\, c\ge0\})\\
	={}& ( \mathcal{C}\cap \{ c\ge0 \} ) \cup ( (-\mathcal{C})\cap \{ c\ge0 \} ) \\
	={}& (\mathcal{C}\cup(-\mathcal{C}))\cap\{c\ge0\}.
\end{align*}
In particular, $V\cap \mathcal{B} \cap\{c>0\} = (\mathcal{C}\cup(-\mathcal{C}))\cap\{c>0\}$.
Since both $V\cap \mathcal{B}$ and $\mathcal{C}\cup(-\mathcal{C})$ are symmetric with respect to the origin, one has
\[
	V\cap \mathcal{B} \cap\{c<0\} = -(V\cap \mathcal{B} \cap\{c>0\})
	= -( (\mathcal{C}\cup(-\mathcal{C}))\cap\{c>0\})
	=  (\mathcal{C}\cup(-\mathcal{C}))\cap\{c<0\}.
\]
Thus, we conclude that $V\cap \mathcal{B} = \mathcal{C}\cup(-\mathcal{C})$.
\end{proof}

Let us now turn to $W \cap \mathcal{B}$.
Remark that $W \cap \mathcal{B}$ is a geodesic of $\mathcal{B}$.
\begin{lemma}
If a geodesic of $\mathcal{B}$ is tangent to $\mathcal{C}$ then it has only one intersection point
with $\mathcal{C}$.
\end{lemma}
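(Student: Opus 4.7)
My plan is to exploit the fact that $\mathcal{C}$ is cut out on $\mathcal{B}$ by the quadratic cone $V = \{b^2 - ac = 0\}$ (with the sign condition $a,c \ge 0$), and that geodesics of $\mathcal{B}$ are great circles, i.e.\ intersections $\Pi \cap \mathcal{B}$ with a plane $\Pi$ through the origin. The whole question thus becomes one about a plane meeting a quadratic cone through its vertex.

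First I would verify transversality of $V$ and $\mathcal{B}$ at each point of $\mathcal{C}$. The normal to $V$ at $p=\ltrans{(a,b,c)}$ is $\ltrans{(-c,2b,-a)}$ and the normal to $\mathcal{B}$ is $p$ itself, and a direct case analysis using $b^2=ac$ shows these two vectors are never parallel on $\mathcal{C}$. Consequently, at any $p\in\mathcal{C}$ the tangent line $T_p\mathcal{C}$ is exactly the one-dimensional subspace $T_pV\cap p^\perp$.

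Next I would translate ``the great circle $\Pi\cap\mathcal{B}$ is tangent to $\mathcal{C}$ at $p$'' into the identity $\Pi\cap p^\perp = T_pV\cap p^\perp$. Because $V$ is a cone with vertex $0$, the ruling $\R p$ lies in $T_pV$; of course $\R p\subset\Pi$ as well. So both $\Pi$ and $T_pV$ are two-dimensional subspaces of $\R^3$ that contain the vector $p$ and share the same one-dimensional intersection with $p^\perp$; this forces $\Pi = T_pV$.

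The final step is the well-known fact that a tangent plane to a non-degenerate quadratic cone meets the cone exactly along its ruling. Restricting the quadratic form $Q(a,b,c)=b^2-ac$ to the two-plane $T_pV$ one obtains a rank-one semidefinite quadratic form on $T_pV$ whose zero locus is the single line $\R p$; equivalently, $Q(p+tv)=t^2 Q(v)$ along any $v\in T_pV$, and one checks $Q$ does not vanish on $T_pV\setminus\R p$. Hence $\Pi\cap V=\R p$, so $\Pi\cap V\cap\mathcal{B}=\{p,-p\}$. Since $\mathcal{C}\subset\{a\ge0,\,c\ge0\}$ and $-\mathcal{C}\subset\{a\le0,\,c\le0\}$ meet only where $a=c=0=b$ (which is not on $\mathcal{B}$), the antipode $-p$ lies in $-\mathcal{C}$ and not in $\mathcal{C}$, giving the required uniqueness of the intersection. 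The main obstacle I anticipate is making the step $\Pi=T_pV$ rigorous, since it hinges on correctly identifying $T_p\mathcal{C}$ via transversality; once that is in place the rest is essentially linear algebra on the quadratic form.
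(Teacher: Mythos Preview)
Your argument is correct and takes a genuinely different route from the paper. The paper proceeds by radially projecting the sphere onto the quadrangular prism $\mathcal{Q}$, where $\tilde{\mathcal{C}}$ becomes an ellipse and geodesics become line segments; the statement then reduces to the elementary plane fact that a tangent line to an ellipse meets it only once, with the two boundary points $(1,0,0)$ and $(0,0,1)$ handled separately by inspection. Your approach instead stays in the cone picture and uses the quadratic form $Q(a,b,c)=b^{2}-ac$ directly: from tangency you identify the plane $\Pi$ of the great circle with the tangent plane $T_pV$ of the cone, and then invoke the standard fact that the tangent plane to a non-degenerate quadric cone meets the cone only along its ruling $\R p$. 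This is cleaner in that it needs no case split at $(1,0,0)$ and $(0,0,1)$ and no auxiliary projection; the paper's argument, on the other hand, requires nothing beyond elementary conic geometry and avoids the (easy but not entirely trivial) verification that $Q$ restricted to $T_pV$ is rank one rather than rank zero. For the latter point, note that $Q$ has signature $(2,1)$, so no two-dimensional subspace can be totally isotropic; this is what guarantees $Q(w)\neq 0$ for $w\in T_pV\setminus\R p$ and completes your step.
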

\begin{proof}
Let $g$ be a geodesic of $\mathcal{B}$.
The geodesic tangent to $\mathcal{C}$ at $(1,0,0)$ is $\mathcal{B} \cap \{ c=0\}$.
The geodesic tangent to $\mathcal{C}$ at $(0,0,1)$ is $\mathcal{B} \cap \{ a=0\}$.
If $g$ is one of them, one directly verifies that $g \cap \mathcal{C}$ is a point.
To handle the other case, we consider the problem on the quadrangular prism $\mathcal{Q}$.

If $g\cap\{a>0,c>0\}\neq \emptyset$ then
$\ell:=P(g \cap \{a>0,c>0\})$ is a line segment on $\mathcal{Q}$.
Remark that $g \cap \{a>0,c>0\}$ is tangent to $\mathcal{C}\cap \{a>0,c>0\}$ if and only if $\ell$
is tangent to $\tilde{\mathcal{C}}\cap \{a>0,c>0\}$.
Since $\tilde{\mathcal{C}}$ is an ellipse,
if $\ell$ is tangent to $\tilde{\mathcal{C}}\cap \{a>0,c>0\}$ then
$\ell$ do not intersect with it at any other point in $\tilde{\mathcal{C}}\cap \{a>0,c>0\}$.
Further, since $\tilde{\mathcal{C}}$ does not intersect with any other faces of $\mathcal{Q}$,
$Pg$ do not have any more intersection points with $\tilde{\mathcal{C}}$.
\end{proof}

\begin{proof}[Proof of Theorem \ref{T:rank1criteria}]
Let ${\bf v}(s): \R/\Z \to \R^3$ be a smooth parametrization of $\mathcal{C}$.
Define $\tilde{\mathcal{D}}$ by
\[
	\tilde{\mathcal{D}} := \{ |{\bf v}'(s)\times {\bf v}(s)|^{-1} {\bf v}'(s)\times {\bf v}(s) \ |\ s \in \R/\Z \}.
\]
Remark that $\tilde{\mathcal{D}} \cup (-\tilde{\mathcal{D}})$ is independent of the direction of the parametrization of $\mathcal{C}$
and hence uniquely determined.
By means of the parametrization \eqref{E:defC} of $\mathcal{C}$,
$$\mathcal{D}:=(\tilde{\mathcal{D}} \cup (-\tilde{\mathcal{D}})) \cap \{a\ge0, c\ge0\}$$ is given by \eqref{E:defD}.
Moreover, $\mathcal{D}=\tilde{\mathcal{D}}$ or $-\tilde{\mathcal{D}}$ holds and so
 $\mathcal{D}\cup (-\mathcal{D}) =\tilde{\mathcal{D}} \cup (-\tilde{\mathcal{D}})$.
Moreover, by the explicit formula, one easily sees that
$\mathcal{D}$ is a simple closed smooth curve on $\mathcal{B}\cap\{a\ge0,c\ge0\}$ and satisfies $\mathcal{D} \cap (-\mathcal{D}) = \emptyset$
and that
\[
	\mathcal{D} \cup (-\mathcal{D}) = \{\ltrans{(a,b,c)} \in \mathcal{B} \ |\ b^2-4ac=0\}. 
\]
The unit sphere $\mathcal{B}$ is divided into three parts by $\mathcal{D} \cup (-\mathcal{D})$.
Namely, $\mathcal{B} \setminus (\mathcal{D} \cup (-\mathcal{D})) = S_2 \cup S_1 \cup (-S_2)$, where 
$S_1$ and $S_2$ are defined in \eqref{E:defS12}.

Let us first consider the second case.
Let ${\bf v}(s):[0,1]\to \R^3$ be a parametrization of the curve $\mathcal{C}$.
By definition of $\mathcal{D}$,
if $\nu (A) \in \mathcal{D} \cup (-\mathcal{D})$ then there exists $s_0 \in [0,1]$ such that
$\nu(A)$ is perpendicular to both ${\bf v}(s_0)$ and ${\bf v}'(s_0)$.
This implies that $\nu(A)^\perp$ is the plane spanned by ${\bf v}(s_0)$ and ${\bf v}'(s_0)$.
Hence, the geodesic $\nu(A)^\perp\cap \mathcal{B}$ is tangent to $\mathcal{C}$ at ${\bf v}(s_0) \in \mathcal{C}$.
By the previous lemma, $W \cap \mathcal{B}$ does not have any more intersection with $\mathcal{C}$.
Thus, $W \cap V \cap \mathcal{B} = \{\pm {\bf v}(s_0)\}$. 

Let us consider the other cases.
Fix $\nu (A_0) \not\in \mathcal{D} \cup (-\mathcal{D})$.
In this case, the corresponding geodesic $\nu(A_0)^\perp \cap \mathcal{B}$ do not tangent to $\mathcal{C}$.
Since $\mathcal{C}$ is a simple closed smooth curve, there exists a small neighborhood of $U \subset \mathcal{B}$ of $\nu(A_0)$ such that if $\nu(A) \in U$
then the number of the intersection points of $\nu(A)^\perp \cap \mathcal{B}$ and $\mathcal{C}$ is constant.
Thus, the number of intersections is constant in each open surfaces $S_j$.
By a direct observation of the specific choices $(2^{-1/2},0,2^{-1/2}) \in S_2$ and $(0,1,0) \in S_1$,
we obtain the result.
\end{proof}

\subsection{Proof of Theorems \ref{T:main1} and \ref{T:main1a}}

\begin{proof}[Proof of Theorem \ref{T:main1a}]
Suppose that $\rank A =1$ and $\nu(A) \in S_1$.
By the trichotomy (Theorem \ref{T:rank1criteria}), $\nu (A)^\perp$ has two intersection points with $\mathcal{C}$.
We name them as
\[
	{\bf p}_j = r(\theta_j) \begin{pmatrix}1+\sin \theta_j\\ \cos \theta_j\\ 1-\sin \theta_j \end{pmatrix}\qquad (j=1,2)
\]
with the normalizing factor $r(\theta)=\sqrt2 / \sqrt{7-\cos 2\theta}$ and $\theta_1,\theta_2 \in [0,2\pi)$ such that $\theta_1\neq \theta_2$.
Remark that $\nu(A)\in \mathcal{B}$ is written as
\[
	\nu (A) = c r(\theta_1)^{-1}r(\theta_2)^{-1}({\bf p}_1 \times {\bf p}_2) = c
	\begin{pmatrix} \cos \theta_1(1-\sin \theta_2)-\cos \theta_2(1-\sin \theta_1)\\
	-2 (\sin \theta_1 -  \sin \theta_2)\\
	-\cos \theta_1 (1+\sin \theta_2)+\cos \theta_2 (1+\sin \theta_1)
	\end{pmatrix}
\]
with a suitable normalizing factor $c=c(\theta_1,\theta_2)\neq0$.
Since $\nu(A)^\perp = \Span \{ {\bf p}_1 , {\bf p}_2\}$, we deduce from Proposition \ref{P:dnuA}
that there exists $(k_1,k_2) \in \R^2 \setminus \{0\}$ such that
the matrix $A$ is given by
\[
	A = (k_1 {\bf p}_1 + k_2 {\bf p}_2) (c^{-1}\ltrans{\nu(A)}).
\]
Thus, by \eqref{E:defA},
\begin{align*}
	\lambda_1 ={}& \tfrac13(k_1 \cos \theta_1 + k_2 \cos \theta_2) (\cos \theta_1 (1+\sin \theta_2)-\cos \theta_2 (1+\sin \theta_1))\\
	&{}+ \tfrac13(k_1(1+\sin\theta_1)+k_2(1+\sin\theta_2))2(\sin \theta_1 -  \sin \theta_2) \\
	\lambda_2 ={}& (k_1(1+\sin\theta_1)+k_2(1+\sin\theta_2))(\cos \theta_1(1-\sin \theta_2)-\cos \theta_2(1-\sin \theta_1))\\
	\lambda_3 ={}& (k_1 \cos \theta_1 + k_2 \cos \theta_2)(\cos \theta_1(1-\sin \theta_2)-\cos \theta_2(1-\sin \theta_1))\\
	\lambda_4 ={}& \tfrac13(k_1(1-\sin\theta_1)+k_2(1-\sin\theta_2))(\cos \theta_1(1-\sin \theta_2)-\cos \theta_2(1-\sin \theta_1))\\
	\lambda_5 ={}& \tfrac13(k_1(1+\sin\theta_1)+k_2(1+\sin\theta_2))(\cos \theta_1 (1+\sin \theta_2)-\cos \theta_2 (1+\sin \theta_1))\\
	\lambda_6 ={}& (k_1 \cos \theta_1 + k_2 \cos \theta_2) (\cos \theta_1 (1+\sin \theta_2)-\cos \theta_2 (1+\sin \theta_1))\\
	\lambda_7 ={}& (k_1(1-\sin\theta_1)+k_2(1-\sin\theta_2))(\cos \theta_1 (1+\sin \theta_2) - \cos \theta_2 (1+\sin \theta_1) )\\
	\lambda_8 ={}& \tfrac13(k_1 \cos \theta_1 + k_2 \cos \theta_2)(\cos \theta_1(1-\sin \theta_2)-\cos \theta_2(1-\sin \theta_1))\\
	&{}- \tfrac13(k_1(1-\sin\theta_1)+k_2(1-\sin\theta_2))(2\sin \theta_1 - 2 \sin \theta_2).
\end{align*}
Remark that the relation
\begin{equation}\label{E:lambdasymmetry}
	\lambda_{9-\ell}(\theta_1,\theta_2,k_1,k_2) = (-1)^{\ell-1}\lambda_{\ell}(\theta_1+\pi,\theta_2+\pi,k_1,k_2)
\end{equation}
holds for $\ell=1,2,3,4$.
When $\theta_1\neq\pi/2, 3\pi/2$ and $\theta_2 \neq \pi/2,3\pi/2$, we introduce new unknowns $v_1$ and $v_2$ by
\[
	v_j = \sqrt{1+\sin \theta_j} u_1 + \sign(\cos \theta_j) \sqrt{1-\sin \theta_j} u_2.
\]
Then, we claim that the system is rewritten as
\begin{equation}\tag{\ref{E:systh1}}
	\left\{
	\begin{aligned}
	(\square + 1) v_1 ={}& \tfrac23k_1(1-\cos (\theta_1 - \theta_2)) v_1^3,\\
	(\square + 1) v_2 ={}& -\tfrac23k_2(1-\cos (\theta_1 - \theta_2)) v_2^3.
	\end{aligned}
	\right.
\end{equation}
Define
\[
	\eta_{j,k}= \eta_{j,k}(\theta_1,\theta_2,k_1,k_2) = \sqrt{1+\sin \theta_j} \l_k + \sign(\cos \theta_j) \sqrt{1-\sin \theta_j} \l_{k+4}
\]
for $j=1,2$ and $k=1,2,3,4$. Then, we obtain
\[
	(\square+1)v_j = \sum_{k=0}^3 \eta_{j,k+1} u_1^{3-k}u_2^{k}.
\]
Remark that $\eta_{1,k}(\theta_1,\theta_2,k_1,k_2)=-\eta_{2,k}(\theta_2,\theta_1,k_2,k_1)$.
Hence, it is suffices to consider the case $j=1$.
Further, $\eta_{1,k}$ is linear with respect to $k_1$ and $k_2$.
One has
\begin{align*}
	\eta_{1,1}(\theta_1,\theta_2,1,0)={}&
	 \tfrac13\sqrt{1+\sin \theta_1}( \cos \theta_1  (\cos \theta_1 (1+\sin \theta_2) - \cos \theta_2 (1+\sin \theta_1) ) \\&
	\qquad +2(1+\sin\theta_1)(\sin \theta_1 - \sin \theta_2))\\
	&+\tfrac13 \sign(\cos \theta_1) \sqrt{1-\sin \theta_1}(1+\sin\theta_1)( \cos \theta_1 (1+\sin \theta_2) - \cos \theta_2 (1+\sin \theta_1))\\
	={}&\tfrac13 (1+\sin \theta_1)^{\frac32}( -\cos \theta_1 \cos \theta_2 +2(\sin \theta_1 - \sin \theta_2) -\cos \theta_1 \cos \theta_2)\\
	&+\tfrac23 \cos^2 \theta_1 (1+\sin \theta_1)^{\frac12}(1+\sin \theta_2)\\
	={}&\tfrac23 (1+\sin \theta_1)^{\frac32} (1- \cos(\theta_1- \theta_2))
\end{align*}
and
\begin{align*}
	\eta_{1,1}(\theta_1,\theta_2,0,1)={}&
	\tfrac13\sqrt{1+\sin \theta_1}( \cos \theta_2  (\cos \theta_1 (1+\sin \theta_2) - \cos \theta_2 (1+\sin \theta_1)) \\&
	\qquad +2(1+\sin\theta_2)(\sin \theta_1 - \sin \theta_2))\\
	&+\tfrac13 \sign(\cos \theta_1) \sqrt{1-\sin \theta_1}(1+\sin\theta_2)(\cos \theta_1 (1+\sin \theta_2) - \cos \theta_2 (1+\sin \theta_1) )\\
	={}&\tfrac13 (1+\sin \theta_1)^{\frac12}(1+\sin \theta_2)(\cos\theta_1 \cos \theta_2 -(1+\sin \theta_1)(1-\sin \theta_2) \\
	&+2(\sin \theta_1 -  \sin \theta_2) + (1-\sin \theta_1)(1+\sin \theta_2)-\cos \theta_1 \cos \theta_2 )\\
	={}&0.
\end{align*}
Hence,
\[
	\eta_{1,1}(\theta_1,\theta_2,k_1,k_2) = \tfrac23 k_1(1+\sin \theta_1)^{\frac32} (1- \cos(\theta_1- \theta_2)).
\]
Similarly,
\begin{align*}
	\eta_{1,2}(\theta_1,\theta_2,1,0)={}&
	\sqrt{1+\sin \theta_1}(1+\sin\theta_1)(\cos \theta_1(1-\sin \theta_2)-\cos \theta_2(1-\sin \theta_1))\\
	&+ \sign(\cos \theta_1) \sqrt{1-\sin \theta_1} \cos \theta_1 (\cos \theta_1 (1+\sin \theta_2) - \cos \theta_2 (1+\sin \theta_1) )\\
	={}& (1+\sin \theta_1)^{\frac12} \cos \theta_1 ((1+\sin\theta_1)(1-\sin\theta_2)-\cos \theta_1 \cos \theta_2\\
	&+(1-\sin\theta_1)(1+\sin\theta_2)-\cos \theta_1 \cos \theta_2)\\
	={}&2 (1+\sin \theta_1)\sign(\cos \theta_1)(1-\sin \theta_1)^\frac12 (1-\cos (\theta_1-\theta_2) )
\end{align*}
and
\begin{align*}
	\eta_{1,2}(\theta_1,\theta_2,0,1)={}&
	\sqrt{1+\sin \theta_1}(1+\sin\theta_2)(\cos \theta_1(1-\sin \theta_2)-\cos \theta_2(1-\sin \theta_1))\\
	&+ \sign(\cos \theta_1) \sqrt{1-\sin \theta_1} \cos \theta_2 (\cos \theta_1 (1+\sin \theta_2) - \cos \theta_2 (1+\sin \theta_1))\\
	={}& (1+\sin \theta_1)^{\frac12} \cos \theta_2 (\cos \theta_1 \cos \theta_2 -(1+\sin\theta_1)(1-\sin\theta_2)\\
	&+(1+\sin\theta_1)(1-\sin\theta_2)-\cos \theta_2 \cos \theta_1)\\
	={}&0.
\end{align*}
Hence,
\[
	\eta_{1,2} (\theta_1,\theta_2,k_1,k_2)=  2k_1 (1+\sin \theta_1)\sign(\cos \theta_1)(1-\sin \theta_1)^\frac12 (1-\cos (\theta_1-\theta_2) )
\]
One sees from \eqref{E:lambdasymmetry} that
\begin{align*}
	\eta_{1,3}(\theta_1,\theta_2,k_1,k_2) ={}& -\sign(\cos \theta_1) \eta_{1,2}(\theta_1+\pi,\theta_2+\pi,k_1,k_2)\\
	={}&2k_1 (1-\sin \theta_1)(1+\sin \theta_1)^\frac12 (1-\cos (\theta_1-\theta_2) )
\end{align*}
and
\begin{align*}
	\eta_{1,4}(\theta_1,\theta_2,k_1,k_2) ={}& \sign(\cos \theta_1) \eta_{1,1}(\theta_1+\pi,\theta_2+\pi,k_1,k_2)\\
	={}&\tfrac23 k_1 (1-\sin \theta_1)^{\frac32}\sign(\cos \theta_1) (1-\cos (\theta_1-\theta_2) ).
\end{align*}
Combining all above, we prove the claim.

When $\theta_1= \pi/2 \neq \theta_2$, we define $v_2$ as above and $v_1= \sqrt2 u_2$.
Similarly, when $\theta_2= \pi/2 \neq \theta_1$, we define $v_1$ as above and $v_2=\sqrt{2} u_1$.
When $\theta_1=3\pi/2$ and $\theta_2 \neq \pi/2,3\pi/2$ and when $\theta_2=3\pi/2$ and $\theta_1 \neq \pi/2,3\pi/2$,
we define $v_1$ and $v_2$ as above.
In all cases, one sees that the system for $(v_1,v_2)$ is the same as above.
\end{proof}

\begin{proof}[Proof of Theorem \ref{T:main1}]
We have seen in Proposition \ref{P:nuA} that $Z_{1,+}$ is invariant.
The  inclusion relation ``$\supset$'' for $Z_{1,+}/{\sim}$ follows from the invariant property
since it is easy to see that $A \in Z_{1,+}$ is true
when $A\in Z$ is the representative of either one of the equivalent
classes in the roster notation.

The  opposite inclusion relation ``$\subset$'' for $Z_{1,+}$
is an immediate consequence of Theorem \ref{T:main1a}.
Remark that a further application of the transform \eqref{E:linearchange} of the form \eqref{E:M1}
makes $\lambda_1,\lambda_8 \in \{0,\pm1\}$ in \eqref{E:systh1}.

Let us next prove $\# (Z_{1,+}/{\sim})=5$, i.e., the equivalent classes in the roster notation 
are mutually disjoint.  
To this end, it suffices to prove that
if $(\l_1,\l_8), (\tilde{\l}_1,\tilde{\l}_8) \in \{0,\pm1\}^2 \setminus \{(0,0)\}$ satisfy
\[
	A:= (\l_1,0,0,0,0,0,0,\l_8) \sim (\tilde{\l}_1,0,0,0,0,0,0,\tilde{\l}_8) =:A'
\]
then $(\l_1,\l_8) = (\tilde{\l}_1,\tilde{\l}_8)$ or $(\l_1,\l_8) = (\tilde{\l}_8,\tilde{\l}_1)$, where we have identified $Z$ and $\R^8$ via \eqref{E:defA}.
Remark that when $(\l_1,\l_8) = (\tilde{\l}_8,\tilde{\l}_1)$ a further application of \eqref{E:linearchange} with \eqref{E:M2}
gives us $(\l_1,\l_8) = (\tilde{\l}_1,\tilde{\l}_8)$.

Suppose that the relation between $A$ and $A'$ is given by
\[
	M= \begin{pmatrix} a & b \\ c & d \end{pmatrix} \in GL_2(\R).
\]
Then, Theorem \ref{T:classA} shows that $A'= (\det M)^{-1} D(M) A D(M)^{-1}$ holds with
\[
	D(M)^{-1} = \frac1{\det M}\begin{pmatrix} a^2 & 2ac & c^2 \\ ab & ad+ bc & cd \\ b^2 & 2bd & d^2 \end{pmatrix}.
\]
Remark that $\nu(A)= \nu (A') = \ltrans{( 0 , 1 , 0)}$.
Together with Proposition \ref{P:dnuA}, we see that the vector $\ltrans{( 0 , 1 , 0)}$ must be an eigenvector of
$\ltrans{D(M)^{-1}}$ associated with a nonzero eigenvalue. 
For this, we need $ab= cd = 0$. Since $ad-bc\neq0$, this implies
\[
	M = \begin{pmatrix} a & 0 \\ 0 & d \end{pmatrix} \text{ or } \begin{pmatrix} 0 & b \\ c & 0 \end{pmatrix}.
\]
In the first case, looking at the transformed system directly, we see that $(\tilde{\l}_1,\tilde{\l}_8) = (a^{-2} \l_1, d^{-2} \l_8)$.
Together with $\l_1, \l_8, \tilde{\l}_1, \tilde{\l}_8 \in \{0,\pm1\}$, we have $(\tilde{\l}_1,\tilde{\l}_8) = ( \l_1, \l_8)$.
Similarly, one has $(\tilde{\l}_1,\tilde{\l}_8) = (b^{-2} \l_8, c^{-2} \l_1)$ in the second case.
Using $\l_1, \l_8, \tilde{\l}_1, \tilde{\l}_8 \in \{0,\pm1\}$, we conclude that $(\tilde{\l}_1,\tilde{\l}_8) = ( \l_8, \l_1)$.
\end{proof}

\subsection{Proof of Theorems \ref{T:main2} and \ref{T:main2a}}

\begin{proof}[Proof of Theorem \ref{T:main2a}]
Suppose that $\rank A =1$ and $\nu(A) \in \mathcal{D} \cup (-\mathcal{D})$.
Without loss of generality, we may suppose that $\nu(A) \in \mathcal{D}$. Define
$\theta\in [0,2\pi)$ by
\[
	\nu(A) = 	\tilde{r}(\theta) \begin{pmatrix} 1-\sin \theta \\
	-2 \cos \theta \\
	 1+\sin \theta
	 \end{pmatrix},
\]
where $\tilde{r}(\theta)=1/\sqrt{5+\cos 2\theta}$ is a normalization factor.
Then, by Proposition \ref{P:dnuA}, 
there exists $(k,\ell) \in \R^2 \setminus\{0\}$ such that
\[
	A= \( k \begin{pmatrix} 1+\sin \theta \\ \cos \theta \\ 1-\sin \theta \end{pmatrix} +  \ell \begin{pmatrix} \cos \theta \\ -\sin \theta \\ -\cos \theta \end{pmatrix}\) \ltrans{\begin{pmatrix} 1-\sin \theta \\
	-2 \cos \theta \\
	 1+\sin \theta
	 \end{pmatrix}} .
\]
In this case, one sees that
\begin{align*}
	\l_1 ={}&\tfrac{1}3(k\cos \theta+\ell \sin \theta) (1+\sin \theta) +\tfrac23 \ell \cos^2 \theta, &
	\l_2 ={}&k(1-\sin^2\theta)+\ell \cos \theta (1-\sin \theta),\\
	\l_3 ={}&k\cos \theta (1-\sin\theta)-\ell\sin \theta (1-\sin \theta) ,&
	\l_4 ={}& \tfrac{k}3(1-\sin \theta)^2 - \tfrac{\ell}3 \cos \theta (1-\sin \theta), \\
	\l_5 ={}& -\tfrac{k}3(1+\sin \theta)^2 - \tfrac{\ell}3 \cos \theta (1+\sin \theta),&
	\l_6 ={}&- k\cos \theta (1+\sin\theta) + \ell\sin \theta (1+\sin \theta), \\
	\l_7 ={}& -k(1-\sin^2\theta)+\ell \cos \theta (1+\sin \theta),& 
	\l_8 ={}& -\tfrac{1}3(k\cos \theta+\ell \sin \theta) (1-\sin \theta) +\tfrac23 \ell \cos^2 \theta.
\end{align*}

We first consider the case $\theta \neq 3\pi/2$, in which case $1+\sin \theta>0$. Introduce
\[
	v_1 := \sqrt{1+\sin \theta} u_1 + \sign (\cos \theta) \sqrt{1-\sin \theta} u_2.
\]
Then,
\[
	(\square +1) v_1 = \sum_{k=0}^3 \eta_{k+1} u_1^{3-k} u_2^{k}, \qquad \eta_k = \sqrt{1+\sin \theta} \lambda_k + \sign (\cos \theta) \sqrt{1-\sin \theta} \lambda_{k+4}.
\]
By exploiting the identity
\[
	\cos \theta = \sign (\cos \theta) \sqrt{1+\sin \theta} \sqrt{1-\sin \theta},
\]
one sees that
\begin{align*}
	\eta_1 ={}& \tfrac{\ell}3 (1+\sin \theta)^{\frac32}, & \eta_2 = {}& \ell \sign(\cos\theta) (1+\sin \theta)(1-\sin \theta)^{\frac12},\\
	\eta_3 = {}& \ell (1+\sin \theta)^{\frac12}(1-\sin \theta), & \eta_4 ={}& \tfrac{\ell}3 \sign(\cos\theta) (1-\sin \theta)^{\frac32}, 
\end{align*}
Indeed,
\begin{align*}
	\eta_1 ={}& \sqrt{1+\sin \theta} [\tfrac{1}3(k\cos \theta+\ell\sin \theta) (1+\sin \theta) +\tfrac23l\cos^2 \theta] \\
	&+ \sign (\cos \theta) \sqrt{1-\sin \theta} [-\tfrac{k}3(1+\sin \theta)^2 - \tfrac{\ell}3 \cos \theta (1+\sin \theta)]\\
	={}&  \tfrac{1}3(k\cos \theta+l\sin \theta) (1+\sin \theta)^{\frac32} +\tfrac23 \ell \cos^2 \theta \sqrt{1+\sin \theta} \\
	&+ \cos \theta [-\tfrac{k}3(1+\sin \theta)^{\frac32} - \tfrac{\ell}3 \cos \theta \sqrt{1+\sin \theta}]\\
	={}&  \tfrac{\ell}3 \sin \theta  (1+\sin \theta)^{\frac32} +\tfrac{\ell}3(1-\sin^2 \theta) \sqrt{1+\sin \theta}\\
	={}&  \tfrac{\ell}3 (1+\sin \theta)^{\frac32} ,
\end{align*}
for instance.
Hence, the equation for $v_1$ now becomes
\begin{equation}\label{E:case2reducedeq1}
	(\square +1) v_1 = \tfrac{\ell}3 v_1^3.
\end{equation}
Next, we put the form
\[
	u_1 = \frac1{\sqrt{1+\sin\theta}}(v_1 - \sign (\cos \theta) \sqrt{1-\sin \theta} u_2)
\]
to the equation for $u_2$ and erase $u_1$. Recall that $1+\sin \theta>0$.
Then, one sees that
\begin{equation}\label{E:case2reducedeq2}
	(\square + 1) u_2 = \tilde{\lambda_5} v_1^3 + \ell v_1^2 u_2,
\end{equation}
where
\[
	\tilde{\lambda}_5 = -\tfrac13 (k \sqrt{ 1+\sin \theta} +\ell \sign(\cos \theta)\sqrt{1-\sin \theta}).
\]

If $\ell=0$ then the system \eqref{E:case2reducedeq1}-\eqref{E:case2reducedeq2} becomes \eqref{E:systh23}
by letting $v_2 = -3/(k\sqrt{1+\sin\theta})u_2$.
Let us consider the case $\ell\neq0$.
Set
\begin{align*}
	v_2 :={}& (k \sqrt{1+\sin \theta} +\ell \sign (\cos \theta) \sqrt{1-\sin \theta})u_1 \\
	&+ (k \sign (\cos \theta) \sqrt{1-\sin \theta} -\ell \sqrt{1+\sin \theta})u_2.
\end{align*}
If $\tilde{\lambda}_5=0$ then
 one sees from \eqref{E:case2reducedeq2} that
\[
	v_2 = (k \sign (\cos \theta) \sqrt{1-\sin \theta} -\ell \sqrt{1+\sin \theta})u_2
\]
solves
\begin{equation}\label{E:case2reducedeq3}
	(\square + 1) v_2 = \ell v_1^2 v_2.
\end{equation}
If $\tilde{\l}_5\neq0$, we have a formula
\[
	v_2 = 	
	(k+\ell\tfrac{\cos \theta}{1+\sin \theta})
		(v_1 + \tfrac{2\ell}{3\tilde{\lambda_5}} u_2),
\]
from which one easily verifies that
$v_2$ solves \eqref{E:case2reducedeq3}.
Thus, the system for $(v_1,v_2)$ is \eqref{E:systh21}.

To complete the proof, we consider the exceptional case $\theta=3\pi/2$.
In this case, we see that $\l_1=\l_2=\l_5=\l_6=\l_7=0$, and
\[
	\l_3 = 2\ell ,\quad \l_4 = \tfrac43 k, \quad \l_8 = \tfrac23 \ell.
\]
In particular, the systems is 
\[
	(\square + 1) u_1 = 2\ell u_1  u_2^2 + \tfrac{4}3 k  u_2^3\ , \qquad (\square + 1) u_2 = \tfrac23 \ell u_2^3.
\]
If $\ell=0$ then the system is \eqref{E:systh23}
by the change $v_1 = \sqrt2 u_2$ and $v_2 = (3/k\sqrt{2})u_1$.
If $\ell \neq0$ then 
the equation for $v_1 = \sqrt2 u_2$ and
$v_2 =  \sqrt2(\ell u_1 + k u_2) $
becomes \eqref{E:systh21}.
\end{proof}

\begin{proof}[Proof of Theorem \ref{T:main2}]
We have seen that $Z_{1,0}$ is invariant in Proposition \ref{P:nuA}.
The  inclusion relation ``$\supset$'' for $Z_{1,0}$ is immediate:
if $\tilde{\l}\neq0$ then
\[
	\rank (\tilde{\l},0,0,0,0,3\tilde{\l},0,0) = \rank (0,0,0,0,1,0,0,0) = 1
\]
and
\[
	\nu ((\tilde{\l},0,0,0,0,3\tilde{\l},0,0)) =\nu ((0,0,0,0,1,0,0,0))= \ltrans{(0 , 0, 1)} \in \mathcal{D},
\]
where we have identified $Z$ and $\R^8$ via \eqref{E:defA}.
Hence the representatives in the roster notation belong to $Z_{1,0}$.

The other inclusion relation ``$\subset$'' is a consequence of Theorem \ref{T:main2a}.
Remark that we may suppose $\ell=1$ in \eqref{E:systh21} 
by a further application of the transform \eqref{E:linearchange} of the form \eqref{E:M1}.

Let us prove the uniqueness property to show that $\# (Z_{1,0}/{\sim})=3$.
Suppose that  
\[
	A , A' \in \{ (\pm 1,0,0,0,0,\pm 3,0,0), \, (0,0,0,0,1,0,0,0)\} \subset Z_{1,0},
\]
where we have identified $Z$ and $\R^8$ via \eqref{E:defA}.
We show that if $A\sim A'$ then $A=A'$.
Suppose that the change between $A$ and $A'$ is given by 
\[
	M= \begin{pmatrix} a & b \\ c & d \end{pmatrix} \in GL_2(\R).
\]
Then, Theorem \ref{T:classA} shows that $A'= (\det M)^{-1} D(M) A D(M)^{-1}$ holds with
\[
	D(M)^{-1} = \frac1{\det M} \begin{pmatrix} a^2 & 2ac & c^2 \\ ab & ad+ bc & cd \\ b^2 & 2bd & d^2 \end{pmatrix}.
\]
Remark that $\nu (A)= \nu (A') = \ltrans{( 0 , 0 , 1)}$.
Together with Proposition \ref{P:dnuA}, we see that
 the vector $\ltrans{( 0 , 0 , 1)}$ is an eigenvector of
$\ltrans{D(M)^{-1}}$. 
For this, we need $b= 0$. Since $ad-bc\neq0$, we have $a\neq 0$.
Then, looking at the transformed system directly, we see that $\tilde{\l}_1 = a^2 \l_1$,
where $\l_1$ and $\tilde{\l}_1$ are first component of $A ,A'$ in the $\R^8$ representation, respectively. 
As $a\neq0$ and $\l_1, \tilde{\l}_1 \in \{0,\pm1\}$, we conclude that $\tilde{\l}_1 =  \l_1$, proving $A=A'$.
\end{proof}

\subsection{Proof of Theorems \ref{T:main5} and \ref{T:main5a}}

\begin{proof}[Proof of Theorem \ref{T:main5a}]
(1)
First remark that the matrix $\tilde{A}$ defined via \eqref{E:defA} from \eqref{E:systh5} is written as
\[
	\tilde{A} = - 3r\( \cos \theta \begin{pmatrix} 0 \\
	1\\
	0
	\end{pmatrix} + \sin \theta \begin{pmatrix} 1\\
	0\\
	-1
	\end{pmatrix}\)
	\begin{pmatrix} 1 &
	0 &
	1
	\end{pmatrix}.
\] 
Hence, being such a matrix is characterized as the validity of $\nu (\tilde{A}) = (2^{-1/2}) \ltrans{( 1 , 0 , 1)}$.

Suppose that $\rank A = 1$ and $\nu (A) = \ltrans{(a_0 , b_0 , c_0 )} \in S_2 \cup (-S_2)$.
We have $4a_0c_0>b_0^2 \ge 0$. In particular, $a_0\neq0$, $c_0 \neq 0$, and $\sign a_0 = \sign c_0$.
The goal is to find a change of unknowns of the form \eqref{E:linearchange} which makes the transformed matrix $\tilde{A}$
satisfy $\nu (\tilde{A}) = (2^{-1/2}) \ltrans{( 1 , 0 , 1)}$.
As seen in the proof of Theorem \ref{T:main1a},
if we apply the change of unknowns of the form \eqref{E:linearchange} with \eqref{E:M1}, \eqref{E:M2}, or \eqref{E:M3} to the system \eqref{E:sys} corresponding to $A$,
then the corresponding $\nu(A)$ turn into the normalization of $\ltrans{D^{-1}}\nu(A)$ with $D^{-1}$ given by
\eqref{E:D1}, \eqref{E:D2}, or \eqref{E:D3}, respectively.
When $b_0\neq0$, one has the identity
\[
	\ltrans{\begin{pmatrix} 1 & 0 & 0 \\ 0 & -\frac{\sqrt{4a_0c_0-b_0^2}}{b_0} & 0 \\ 0 & 0 & \frac{4a_0c_0-b_0^2}{b_0^2} \end{pmatrix}} \ltrans{\begin{pmatrix} 1 & 0 & 0 \\ 1 & 1 & 0 \\ 1 & 2 & 1 \end{pmatrix}} \ltrans{\begin{pmatrix} 1 & 0 & 0 \\ 0 & -\frac{b_0}{2c_0} & 0 \\ 0 & 0 & (-\frac{b_0}{2c_0})^2 \end{pmatrix}} \nu(A)
	= \frac{4a_0c_0-b_0^2}{4c_0} \begin{pmatrix} 1 \\ 0 \\ 1 \end{pmatrix}
\]
In view of \eqref{E:DM-1form} and the identity $\ltrans{D(M_1)^{-1}} \ltrans{D(M_2)^{-1}} = \ltrans{D(M_1M_2)^{-1}}$,
the above computation suggests to take
\[
	M = \begin{pmatrix} -1 & 0 \\ 0  & b_0^{-1} \sqrt{4a_0c_0-b_0^2} \end{pmatrix} \begin{pmatrix} 1 & 1 \\ 0  & 1\end{pmatrix}\begin{pmatrix} -2 c_0 & 0 \\ 0  & b_0\end{pmatrix}
	= \begin{pmatrix}  2c_0 & - b_0 \\ 0 & \sqrt{4a_0c_0-b_0^2} \end{pmatrix} \in GL_2(\R)
\]
since Corollary \ref{C:nuA} implies that the matrix has the desired property, i.e., this yields $\nu(\tilde{A}) = (2^{-1/2}) \ltrans{( 1 , 0 , 1)}$.
Similarly, when $b_0=0$, we have
\[
	\ltrans{\begin{pmatrix} 0 & 0 & -1 \\ 0 & -1 & 0 \\ -1 & 0 & 0 \end{pmatrix}}
	\ltrans{\begin{pmatrix} 1 & 0 & 0 \\ 0 & \pm \sqrt{a_0/c_0} & 0 \\ 0 & 0 & a_0/c_0 \end{pmatrix}}
	\ltrans{\begin{pmatrix} 0 & 0 & -1 \\ 0 & -1 & 0 \\ -1 & 0 & 0 \end{pmatrix}}
	 \nu(A)
	= c_0 \begin{pmatrix} 1 \\ 0 \\ 1 \end{pmatrix}.
\]
Hence, the change \eqref{E:linearchange} with the matrix
\[
	M = \begin{pmatrix} 0 & 1 \\  1 & 0 \end{pmatrix}\begin{pmatrix} 2c_0 & 0 \\ 0  & \sqrt{4a_0c_0} \end{pmatrix}
	\begin{pmatrix} 0 & 1 \\  1 & 0 \end{pmatrix}
	= \begin{pmatrix} 2c_0 & 0 \\ 0  & \sqrt{4a_0c_0} \end{pmatrix}
\]
works for both $a_0>0$ and $a_0<0$.

(2) Suppose that $(w_1,w_2)$ solves \eqref{E:systh5} for some $r>0$ and $\theta \in \R$. Then,
$w=w_1 + i w_2$ solves
\[
	(\square + 1)  w = r e^{i\theta} w^3.
\]
Multiplying the both sides by $r^{1/2} e^{i\theta/2}$, we have
\[
	(\square + 1)  (r^{1/2} e^{i\theta/2} w) = (r^{1/2} e^{i\theta/2} w)^3.
\]
This implies that
\[
	\begin{pmatrix}
	v_1 \\ v_2
	\end{pmatrix}
	:=
	\begin{pmatrix}
	\Re (r^{1/2} e^{i\theta/2} w) \\ \Im (r^{1/2} e^{i\theta/2} w)
	\end{pmatrix}
	=
	\begin{pmatrix} r^{\frac12} \cos \frac{\theta}2 & -r^{\frac12} \sin \frac{\theta}2 \\  r^{\frac12} \sin \frac{\theta}2 &  r^{\frac12} \cos \frac{\theta}2 \end{pmatrix} \begin{pmatrix} w_1 \\ w_2 \end{pmatrix}
\]
solves \eqref{E:sysnew2}.
\end{proof}

\begin{proof}[Proof of Theorem \ref{T:main5}]
The invariant property of $Z_{1,-}$ is shown in Proposition \ref{P:nuA}.
The inclusion relation ``$\supset$'' follows from the proposition.
We note that
\[
	\rank ((1 ,0,-3 ,0,0,3,0 ,-1)) = 1
\]
and
\[
	\nu ((1 ,0,-3 ,0,0,3,0 ,-1))
	= (2^{-1/2}) \ltrans{( 1 , 0 , 1)} \in S_2
\]
hold. Hence, the representative in the roster notation is in $Z_{1,-}$.

The opposite inclusion relation ``$\subset$'' follows from Theorem \ref{T:main5a}.
\end{proof}

\section{Proofs of classification theorems for $Z_2$}\label{S:main3}

\subsection{On the condition \eqref{E:main3cond}}
Let us briefly explain  
how the assumption \eqref{E:main3cond} comes into play.
As seen in Subsection \ref{subs:good},
when $\rank A =1$, the existence of two conserved quantities of the form
\begin{equation}\tag{\ref{E:conservedQ}}
	a |\alpha_1|^2 + 2 b \Re (\overline{\alpha_1}\alpha_2) + c |\alpha_2|^2
\end{equation}
to the limit ODE system is crucial.
However, the ODE has only one conserved quantity of this form when $\rank A =2$.
To make up for the lack, we suppose that we have another conserved quantity 
\begin{equation}\tag{\ref{E:conservedQ2}}
	2\Im (\overline{\alpha_1}\alpha_2 ).
\end{equation}
As discussed in Subsection \ref{subsec:another_conservation} below,
the conservation of $2\Im (\overline{\alpha_1}\alpha_2 )$ requires the condition
\begin{equation}\tag{\ref{E:main3cond}}
	\l_4=\l_5=0,\quad \l_6=\l_1,\quad \l_7=\l_2, \quad\l_8=\l_1.
\end{equation}

According to the discussion in Subsection \ref{subs:good},
a variable which has a conserved modulus will be a good variable to work with.
As for the conserved quantity of the form \eqref{E:conservedQ}, this is the case when $b^2=ac$.
In the case $\rank A=1$, we tried to obtain such a conserved quantity by considering linear combinations of two linearly independent solutions to \eqref{E:Aeq}.
The argument does not work here because we have only one solution to \eqref{E:Aeq}. 
We use the following:
If we have a conserved quantity of the form \eqref{E:conservedQ} with
$b^2<ac$ and if $\Im (\overline{\alpha_1}\alpha_2 )$ is conserved in addition then 
\begin{align*}
	&a |\alpha_1|^2 + 2 b \Re (\overline{\alpha_1}\alpha_2) + c |\alpha_2|^2 + 2\sqrt{ac-b^2}\Im (\overline{\alpha_1}\alpha_2 )\\
	&= (\sign a )|\sqrt{|a|}\alpha_1 + \gamma \alpha_2|^2
\end{align*}
becomes a conserved quantity,
where $\gamma = (\sign a)(b- i\sqrt{ac-b^2})/\sqrt{|a|}\in \C\setminus\R$.
This suggests that
\[
	 \sqrt{|a|} u_1 \pm \gamma u_2
\]
are good variables to work with.

As seen in the actual proof below, once we suppose \eqref{E:main3cond}, the change of variable is rather obvious without the above
discussion of the conserved quantities for the limit ODE system.

\subsection{Proof of Theorems \ref{T:main3} and \ref{T:main3a}}
\begin{proof}[Proof of Theorems \ref{T:main3} and \ref{T:main3a}]
Under the assumption \eqref{E:main3cond},
the system \eqref{E:sys} becomes
\begin{equation*}
\left\{
\begin{aligned}
&(\square + 1)u_1 
= (\l_1u_1^2 + \l_2u_1u_2 + \l_3u_2^2)u_1, \\
&(\square + 1)u_2 
= (\l_1u_1^2 + \l_2u_1u_2 + \l_3u_2^2)u_2.
\end{aligned}
\right.
\end{equation*}
If we regard $V:=\l_1u_1^2 + \l_2u_1u_2 + \l_3u_2^2$ as a linear potential
then the above system is a system of two linear equations
\begin{equation*}
\left\{
\begin{aligned}
&(\square + 1)u_1 
= V u_1, \\
&(\square + 1)u_2 
= V u_2.
\end{aligned}
\right.
\end{equation*}
This shows that, for any change of variable of the form \eqref{E:linearchange}, the transformed equation becomes
\begin{equation*}
\left\{
\begin{aligned}
&(\square + 1)v_1 
= V v_1, \\
&(\square + 1)v_2 
= V v_2
\end{aligned}
\right.
\end{equation*}
with the same $V$. Since $V$ is written as $V=\tilde{\l}_1v_1^2 + \tilde{\l}_2 v_1v_2 + \tilde{\l}_3v_2^2$ in terms of the new unknowns,
the validity of the condition \eqref{E:main3cond} is left invariant by any change of variable of the form \eqref{E:linearchange}.

Now, notice that $V=\l_1u_1^2 + \l_2u_1u_2 + \l_3u_2^2$ is a quadratic form with respect to $(u_1,u_2)$.
Hence, the theorems are immediate consequences of the theory of quadratic form.
The reduced system $(\pm1,0,\pm1,0,0,\pm1,0,\pm1)$ correspond to the reduced quadratic forms with sign $(+,+)$ and $(-,-)$, respectively.
Similarly, the systems $(\pm1,0,0,0,0,\pm1,0,0)$ correspond to the form with sign $(+,0)$ and $(-,0)$, respectively
and $(1,0,-1,0,0,1,0,-1)$ to the form with sign $(+,-)$.
\end{proof}

\section{Preliminary for the study of large time behavior}\label{S:pre}

In this section we briefly summarize preliminaries for the study of large time behavior of solutions
to \eqref{E:sys}.
We also consider the derivation of  the limit ODE system \eqref{E:limitODE}.

\subsection{Local existence}

We employ the following local existence result by Delort \cite[Proposition 1.4]{Del}.

\begin{proposition}[Local existence]\label{P:hyp}
Let $m\ge3$ be an integer. 
Suppose that $(u_{j,0},u_{j,1})\in H^{m}(\R)\times H^{m-1}(\R)$ is compactly supported.
Let $B$ be the number such that \eqref{E:support} holds. Fix $\tau_{0}>\max\{1,2B\}$. 
Then there exists $\eta_{0}>0$ 
with the following properties:
If $\eta:=\sum_{j=1}^2(\|u_{j,0}\|_{H^{m}}+\|u_{j,1}\|_{H^{m-1}})\le\eta_{0}$
then there exist $T\ge(\tau_{0}^{2}-3B^{3})/(2B)$ 
and unique solution $(u_1,u_2)$ to \eqref{E:sys} satisfying 
$u_j\in C([0,T];H^{m}(\R))\cap C^{1}([0,T];H^{m-1}(\R))$ ($j=1,2$). 
Especially, $(u_1,u_2)$ is defined on the curve $\{(t,x)\in\R^{2}||x|\le(\tau_{0}^{2}-B^{2})/(2B), 
(t+2B)^{2}-|x|^{2}=\tau_{0}^{2}\}$ and the solution obeys the following bound on the curve: 
\begin{eqnarray*}
\sum_{0\le k+\ell\le m} \sum_{j=1}^2
\int_{|y|\le\frac{\tau_{0}^{2}-B^{2}}{2B}}\left|\pt_{t}^{k}\pt_{x}^{\ell}u_j\bigl|_{(t,x)=(\sqrt{\tau_{0}^{2}+y^{2}}-2B,y)}\right|^{2}
dy\le C\eta^{2}.
\end{eqnarray*}
Furthermore, the solution has finite propagation speed:
\[
	\supp u_1(t) \cup \supp u_2 (t) \subset\{x\in \R\ |\ |x|\le t+B\}
\] for $0\le t\le T$. 
\end{proposition}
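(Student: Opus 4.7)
The plan is to follow the hyperbolic energy method of Klainerman, as in Delort's work \cite{Del}. The first step is a classical semilinear local existence for \eqref{E:sys}: since $m\ge 3$, the Sobolev embedding $H^{m-1}(\R)\hookrightarrow L^{\I}(\R)$ makes the cubic right-hand side locally Lipschitz on bounded balls of $H^{m-1}$, so a Picard iteration based on the $L^{2}$ energy identity for $\square+1$ yields a unique maximal solution $u_j\in C([0,T^\ast);H^m)\cap C^1([0,T^\ast);H^{m-1})$, with the lifespan bounded below by an increasing function of $\eta^{-1}$.

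For the finite propagation statement, fix a point $(t_0,x_0)$ with $|x_0|>t_0+B$ and apply the $L^2$ energy identity to $(\square+1)u_j$ on the backward truncated cone $K:=\{(t,x)\,|\,0\le t\le t_0,\,|x-x_0|\le t_0-t\}$. The flux through the null lateral boundary is nonnegative, the base $\{t=0\}\cap K$ lies in $\{|x|>B\}$ where the data vanish, and the cubic nonlinearity is controlled in $L^{\I}$ by Sobolev embedding, so Gr\"onwall forces $u_j\equiv 0$ on $K$; hence $\supp u_j(t)\subset\{|x|\le t+B\}$ for $0\le t<T^\ast$.

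For the bound on the hyperbolic curve $\Gamma_{\tau_0}:=\{(t+2B)^2-|x|^2=\tau_0^2\}$, introduce the coordinates $(\tau,z)$ by $t+2B=\tau\cosh z$ and $x=\tau\sinh z$, which foliate the interior of the forward light cone with apex $(-2B,0)$ by the spacelike slices $\{\tau=\mathrm{const}\}$. The Lorentz boost $Z:=(t+2B)\pt_x+x\pt_t$ is tangent to these slices and commutes with $\square+1$. Differentiating the equation by $Z^k\pt_x^\ell$ for $k+\ell\le m$ and performing the energy identity on $\Gamma_\tau$ truncated at $|x|\le(\tau^2-B^2)/(2B)$, where the lateral boundary sits on $\{|x|=t+B\}$ and is killed by the finite propagation step, yields a hyperbolic energy $E(\tau)$ satisfying
\[
	E(\tau)\le C\eta^2+C\int_{2B}^{\tau}\|(u_1,u_2)(\sigma,\cdot)\|_{L^{\I}}^{2}\,E(\sigma)\,d\sigma.
\]
Sobolev embedding along the slice gives $\|u_j(\sigma,\cdot)\|_{L^{\I}}\lesssim E(\sigma)^{1/2}$, so a continuity argument using $\eta\le\eta_0$ yields $E(\tau_0)\le C\eta^2$; converting the hyperbolic measure to the flat measure $dy$ (with bounded Jacobian on the truncated curve) gives the stated bound, and since the truncated curve reaches up to $t=(\tau_0^2-3B^2)/(2B)$, the local solution extends past this time as required.

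The main technical obstacle is the hyperbolic energy estimate in the third step: one must verify that the truncated slice $\Gamma_{\tau_0}\cap\{|x|\le(\tau_0^2-B^2)/(2B)\}$ is strictly spacelike and that its lateral endpoints lie precisely on the outer light cone $|x|=t+B$ of the data's support, so that the lateral flux vanishes automatically by the previous step; the requirement $\tau_0>\max\{1,2B\}$ is arranged exactly for this. The commutators $[\square+1,Z^k\pt_x^\ell]$ produce only lower-order contributions that are absorbed into $E(\tau)$, and the cubic nonlinearity loses only one factor to $L^{\I}$, so the Gr\"onwall bootstrap closes under the smallness assumption $\eta\le\eta_0$.
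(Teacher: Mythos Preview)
Your argument is correct in outline, but it is considerably more elaborate than the paper's. The paper simply invokes standard semilinear local well-posedness for \eqref{E:sys} (citing H\"ormander), which already yields a lifespan $T=O(1/\eta)$ together with the uniform bound
\[
\sup_{0\le t\le T}\sum_{j=1}^2\bigl(\|u_j(t)\|_{H^m}+\|\partial_t u_j(t)\|_{H^{m-1}}\bigr)\le C\eta.
\]
Choosing $\eta_0$ small forces $T\ge(\tau_0^{2}-3B^{2})/(2B)$, so the truncated curve $\Gamma_{\tau_0}$ lies entirely in $[0,T]\times\R$; finite propagation speed is quoted as standard, and the $L^2$ bound on $\Gamma_{\tau_0}$ then follows from these uniform flat-slice bounds (e.g.\ by the energy-flux identity between $\{t=0\}$ and the spacelike curve $\Gamma_{\tau_0}$, the lateral flux vanishing by the support property). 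No hyperbolic coordinates, Lorentz boosts, or bootstrap on hyperboloids enter at this stage.

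Your route---commuting with $Z=(t+2B)\partial_x+x\partial_t$ and running a Gr\"onwall argument along the slices $\{\tau=\mathrm{const}\}$---is exactly the machinery the paper deploys \emph{after} this proposition to obtain the global bounds of Propositions~\ref{P:ap} and~\ref{P:ap2}. Invoking it here is valid but redundant: once the flat local theory delivers $T=O(1/\eta)$, the proposition is essentially immediate. One small wrinkle in your write-up is the lower limit $\tau=2B$ in the hyperbolic energy inequality: the hyperboloid $\{\tau=2B\}$ does not coincide with the initial slice $\{t=0\}$ (on the support $|x|\le B$ one has $\tau\in[\sqrt{3}B,2B]$ at $t=0$), so the energy identity should really be run between $\{t=0\}$ and $\Gamma_{\tau_0}$ directly---at which point the whole hyperbolic foliation becomes unnecessary for this local statement.
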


\begin{proof}[Proof of Proposition \ref{P:hyp}] 
By the standard local existence theorem for the nonlinear Klein-Gordon 
equation (see \cite{Hor} for instance), 
there exist $T=O(1/\eta)$ and unique solution $(u_1,u_2)$ to \eqref{E:sys} 
satisfying $u_j\in C([0,T];H^{m}(\R))\cap C^{1}([0,T];H^{m-1}(\R))$  
such that 
\begin{eqnarray*}
\sup_{0\le t\le T}(\|u_j(t)\|_{H^{m}}+\|\pt_{t}u_j(t)\|_{H^{m-1}})
\le C\eta \quad \text{for}\ j=1,2.
\end{eqnarray*}
Furthermore, by the property of finite speed of propagation   
for the solution to \eqref{E:sys}, we have 
$\text{supp}\ u_j(t)\subset\{x\in\R\ |\ |x|\le t+B\}$ for $0\le t\le T$. 
Choosing $\eta>0$ sufficiently small, we can take 
$T\ge(\tau_{0}^{2}-3B^{2})/(2B)$. Then we have the conclusion. 
\end{proof}

\subsection{Hyperbolic coordinate}

Let $(u_1,u_2)$ be a solution given by Proposition \ref{P:hyp}.
By Proposition \ref{P:hyp} and the property of finite speed of propagation 
for the solution to  \eqref{E:sys}, 
to obtain a global solution to \eqref{E:sys}
it suffices to consider it on the domain 
\begin{equation}\label{E:domain}
D:=\{(t,x) \in \R\times \R\ ;\ t \ge 0, (t+2B)^2-|x|^2 \ge \tau_0^2\}.
\end{equation}
We introduce the hyperbolic coordinates 
\[
\tau = \sqrt{(t+2B)^2 - x^2}, \quad z = \tanh^{-1}\left(\frac{x}{t+2B}\right),
\]
or equivalently,
\begin{equation}\label{E:hypcoord}
t + 2B = \tau \cosh z, \quad x = \tau \sinh z, \quad \tau \ge \tau_0,\ z \in \R.
\end{equation}
Then it holds that 
\begin{align*}
\partial_t &= (\cosh z)\partial_\tau - \frac{\sinh z}{\tau}\partial_z,\quad
\partial_x = -(\sinh z)\partial_\tau + \frac{\cosh z}{\tau}\partial_z,\\
\square &= \frac{\partial^2}{\partial \tau^2} 
+ \frac{1}{\tau}\frac{\partial}{\partial \tau} - \frac{1}{\tau^2}\frac{\partial^2}{\partial z^2}.
\end{align*}
For a solution $(u_1,u_2)$ to \eqref{E:sys}, 
we introduce new unknowns
$U_j = U_j(\tau,z)$ for $j = 1,2$ by 
\begin{equation}
u_j(t,x) = \frac{1}{\tau^{1/2}\cosh \kappa z}U_j(\tau,z),
\label{eq:2.27}
\end{equation}
where $\kappa$ is a positive constant to be fixed later. 
It will turn out that we would need $\kappa>7/2$.
It follows from \eqref{eq:2.27} that
\begin{equation}
\label{eq:2.29}
\begin{aligned}
\partial_\tau^2U_j + U_j 
= \frac{1}{\tau(\cosh \kappa z)^2}F_j(U_1,U_2) 
+ \frac{1}{\tau^2}PU_{j}
\end{aligned}
\end{equation}
($j=1,2$),
where 
 $P$ is a differential operator defined by 
\begin{equation}
\label{eq:2.28}
P = \partial_z^2-2\kappa(\tanh \kappa z)\partial_z 
-\frac{2\kappa^2}{(\cosh \kappa z)^2}-\frac{1}{4}+\kappa^2.
\end{equation}
and
\begin{equation}\notag
\begin{aligned}
F_1(U_1,U_2) &= \lambda_1U_1^3 + \lambda_2U_1^2U_2 + \l_3U_1U_2^2+\l_4U_2^3, \\
F_2(U_1,U_2) &= \lambda_5U_1^3 + \lambda_6U_1^2U_2+\l_7U_1U_2^2+\l_8U_2^3.
\end{aligned}
\end{equation}

\subsection{Reduction to the limit ODE system}\label{subsec:limitODEsys}
We would like to look at the behavior of $(U_1,U_2)$. 
To this end, we follow the argument in \cite{Del,Su3,S}.
Let us introduce a $\C^2$-valued function 
$\alpha = (\alpha_1,\alpha_2)$ by 
\[
	U_j(\tau,z) = 2 \Re (\alpha_j(\tau,z) e^{i\tau}), \quad
	\partial_\tau U_j(\tau,z) = -2 \Im (\alpha_j(\tau,z) e^{i\tau})
\]
or more explicitly by
\begin{equation}
\alpha_j(\tau,z) := \frac{1}{2}\left(
U_j(\tau,z) -i\partial_\tau U_j(\tau,z)\right)e^{-i\tau}
\label{eq:2.30}
\end{equation}
for $j = 1,2$. 

Let us find the system for $(\alpha_1,\alpha_2)$.
In one hand,
by definition of $\alpha_j$, we have the following identity for the linear part: 
\begin{equation}
\label{eq:2.31}
\begin{aligned}
\frac{\partial \alpha_j}{\partial \tau}(\tau,z)
&= -\frac{i}{2}e^{-i\tau}(\partial_\tau^2U_j(\tau,z) + U_j(\tau,z)).
\end{aligned}
\end{equation}
On the other hand, we are able to express 
the nonlinearities by means of $\alpha_1$ and $\alpha_2$ as follows:
\begin{equation}
\label{eq:2.32}
\begin{aligned}
F_j(U_1,U_2) &=(3\l_{4j-3}|\a_1|^2{\a_1}+\l_{4j-2}(2|\a_1|^2\a_2+\a_1^2\ol{\a_2})+
\l_{4j-1}(2\a_1|\a_2|^2+\ol{\a_1}\a_2^2)+3\l_{4j} |\a_2|^2 \a_2 )e^{i\tau}\\
&\quad + (3\l_{4j-3} |\a_1|^2 \ol{\a_1} +\l_{4j-2}\ol{\a_1}^2\a_2
+\l_{4j-1} \a_1\ol{\a_2}^2+3\l_{4j} |\a_2|^2 \ol{\a_2})e^{-i\tau}\\
&\quad + (\l_{4j-3} \a_1^3+\l_{4j-2} \a_1^2\a_2+\l_{4j-1} \a_1\a_2^2+\l_{4j} \a_2^3
)e^{3i\tau}\\
&\quad + (\l_{4j-3} \ol{\a_1}^3+\l_{4j-2} \ol{\a_1}^2\ol{\a_2}+\l_{4j-1} \ol{\a_1}\ol{\a_2}^2
+ \l_{4j} \ol{\a_2}^3)e^{-3i\tau}
\end{aligned}
\end{equation}
for $j=1,2$.
Combining \eqref{eq:2.29}, \eqref{eq:2.31} and \eqref{eq:2.32}, 
we have the system for $(\alpha_1,\alpha_2)$:
\begin{equation}
\label{eq:2.34}
\begin{aligned}
&2i\frac{\partial \alpha_j}{\partial \tau}(\tau,z)\\
&= 
\frac{1}{\tau(\cosh \kappa z)^2}
(3\l_{4j-3} |\a_1|^2 \a_1+\l_{4j-2} (2|\a_1|^2\a_2+\a_1^2\ol{\a_2})+
\l_{4j-1}(2\a_1|\a_2|^2+\ol{\a_1}\a_2^2)+3\l_{4j}|\a_2|^2\a_2)\\
&\quad +
\frac{1}{\tau}N_{j,\text{nr}}(\alpha_{1},\alpha_{2})
+e^{-i\tau} \frac{1}{\tau^2}PU_{j}
\end{aligned}
\end{equation}
for $j=1,2$, where $N_{j,\text{nr}}$ is a ``non-resonant'' term given by 
\begin{equation}
\label{eq:2.35}
\begin{aligned}
N_{j,\text{nr}}(\alpha_{1},\alpha_{2}) :=& \frac{1}{(\cosh \kappa z)^2}\bigg\{
(\l_{4j-3} \a_1^3+\l_{4j-2} \a_1^2\a_2+\l_{4j-1} \a_1\a_2^2+\l_{4j} \a_2^3)e^{2i\tau}
\\
&\qquad + (3\l_{4j-3} |\a_1|^2\ol{\a_1} +\l_{4j-2} \ol{\a_1}^2\a_2
+\l_{4j-1} \a_1\ol{\a_2}^2+3\l_{4j} |\a_2|^2 \ol{\a_2})e^{-2i\tau}\\
&\qquad+ (\l_{4j-3} \ol{\a_1}^3+\l_{4j-2} \ol{\a_1}^2\ol{\a_2}+\l_{4j-1} \ol{\a_1}\ol{\a_2}^2
+ \l_{4j} \ol{\a_2}^3)e^{-4i\tau}\bigg\}
\end{aligned}
\end{equation}
for $j=1,2$.

As seen below, the third term in the right hand side of \eqref{eq:2.34} can be regarded as a remainder term
because of the factor $1/\tau^2$.
We will see that the non-resonant term is also a remainder term.
To this end, 
we need to take the oscillation factors $e^{i\omega \tau}$ into account.
Thus, at least formally, the non-resonant term and the remainder term in \eqref{eq:2.34} are negligible and so
the asymptotic behavior of solutions to the system \eqref{eq:2.34} is described by the limit ODE system
\begin{equation}\tag{\ref{E:limitODE}}
\left\{
\begin{aligned}
	&2i\frac{\partial \alpha_1}{\partial s}=
	3\l_{1} |\a_1|^2\a_1+\l_{2} (2|\a_1|^2\a_2+\a_1^2\ol{\a_2})+
\l_{3}(2\a_1|\a_2|^2 +\ol{\a_1}\a_2^2)+3\l_{4}|\a_2|^2\a_2,\\
	&2i\frac{\partial \alpha_2}{\partial s}=
	3\l_{5} |\a_1|^2\a_1+\l_{6} (2|\a_1|^2\a_2+\a_1^2\ol{\a_2})+
\l_{7}(2\a_1|\a_2|^2 +\ol{\a_1}\a_2^2)+3\l_{8}|\a_2|^2\a_2,
\end{aligned}
\right.
\end{equation}
where $\alpha_j :\R \to \C$ ($j=1,2$) are complex-valued unknowns.
The system is obtained by dropping the non-resonant term and the remainder term
and by introducing $s=(\cosh \kappa z)^{-2}\log \tau $. 

\subsection{Conserved quantity for the limit ODE system}\label{subsec:conserved_quantity}
The matrix $A$ given in \eqref{E:defA}
is related with a conserved quantity of the limit ODE system \eqref{E:limitODE}.
We will see that the quantity plays an important role
in finding the explicit solutions to the limit ODE system.
Let us next look at the quantity.
In this and the next subsection, we suppose that $(\alpha_1,\alpha_2)$ is a smooth solution to \eqref{E:limitODE}.

It follows from \eqref{E:limitODE} that 
\begin{align*}
\pa_s|\a_1|^2 
&= \l_2|\a_1|^2\Im(\ol{\a_1}\a_2)+ \l_3\Im(\ol{\a_1}^2\a_2^2)+3\l_4|\a_2|^2\Im(\ol{\a_1}\a_2)
\end{align*}
and 
\begin{align*}
\pa_s |\a_2|^2
&= -3\l_5|\a_1|^2\Im(\ol{\a_1}\a_2)-\l_6\Im(\ol{\a_1}^2\a_2^2)
-\l_7\Im(\ol{\a_1}\a_2)|\a_2|^2.
\end{align*}
Similarly, we have 
\begin{align*}
\pa_s (2\Re(\ol{\a_1}\a_2))
&= (\l_6-3\l_1)|\a_1|^2\Im(\ol{\a_1}\a_2)
+(\l_7-\l_2)\Im(\ol{\a_1}^2\a_2^2)
+(3\l_8-\l_3)\Im(\ol{\a_1}\a_2)|\a_2|^2.
\end{align*}
Thus, we have the following identity: For constants $a,b,c \in \R$,
\begin{equation}
\label{eq:2.38}
\begin{aligned}
&\frac{\pa}{\pa s}
\left(
a|\alpha_1|^2+2b\Re(\overline{\alpha}_1\alpha_2)+c|\alpha_2|^2
\right)\\
&= |\alpha_1|^2\Im(\overline{\alpha}_1\alpha_2)
(a\lambda_2+b(\lambda_6-3\lambda_1)-3c\lambda_5)\\
&\quad + \Im(\ol{\alpha}_1^2\alpha_2^2)
(a\lambda_3+b(\lambda_7-\lambda_2)-c\lambda_6)\\
&\quad +|\alpha_2|^2\Im(\overline{\alpha}_1\alpha_2)
(3a\lambda_4+b(3\lambda_8-\lambda_3)-c\lambda_7).
\end{aligned}
\end{equation}
Hence, if the triplet $(a,b,c)$ satisfies
\begin{equation}
\left\{
\begin{aligned}
a\la_2+b(\la_6-3\la_1)-3c\la_5 &= 0, \\
a\la_3+b(\la_7-\la_2)-c\la_6 &= 0,\\
3a\la_4+b(3\la_8-\la_3)-\la_7 &= 0, 
\end{aligned}
\right.
\end{equation}
then 
\begin{equation}\label{E:conservedQ}
	a|\alpha_1|^2+2b\Re(\overline{\alpha}_1\alpha_2)+c|\alpha_2|^2
\end{equation}
is a conserved quantity for \eqref{E:limitODE}.
The condition reads as 
\begin{equation*}
\begin{pmatrix}
\la_2 & \la_6 - 3\la_1 & -3\la_5 \\
\la_3 & \la_7 - \la_2 & -\la_6\\
3\la_4 & 3\la_8-\la_3 & -\la_7
\end{pmatrix}
\begin{pmatrix}
a \\ b \\ c
\end{pmatrix}
= 0,
\end{equation*}
of which coefficient matrix is nothing but the matrix $A$ defined by \eqref{E:defA}. 
The equation reads as $\ltrans{(a,b,c)} \in \ker A$.

\subsection{Another conserved quantity for the limit ODE}\label{subsec:another_conservation}

We also consider the following quantity:
\begin{equation}\label{E:conservedQ2}
	2\Im (\overline{\alpha_1}\alpha_2 ).
\end{equation}
A computation shows that 
\begin{equation}
\label{eq:2.42}
\begin{aligned}
\pa_s (2\Im(\ol{\a_1}\a_2))
&= 3|\a_1|^2\Re(\ol{\a_1}\a_2)(\l_1-\l_6)
+2|\a_1|^2|\a_2|^2(\l_2-\l_7)\\
&\quad + \Re(\ol{\a_1}^2\a_2^2)(\l_2-\l_7)
+3|\a_2|^2\Re(\ol{\a_1}\a_2)(\l_3-\l_8)\\
&\quad +3\l_4|\a_2|^4-3\l_5|\a_1|^4.
\end{aligned}
\end{equation}
Hence,
it becomes a conserved quantity if and only if
\begin{equation*}
	\l_4=\l_5=0,\quad \l_6=\l_1,\quad \l_7=\l_2, \quad\l_8=\l_3.
\end{equation*}
This is the main part of the condition \eqref{E:main3cond}.
Notice that the conservation of \eqref{E:conservedQ} and of \eqref{E:conservedQ2} are independent in such a sense that
the conservation of a linear combination of them (with nonzero constants)
holds only if these quantities are both conserved.
This is because the terms in the right hand side of \eqref{eq:2.38} and \eqref{eq:2.42} are different.

\begin{remark}
The validity of \eqref{E:main3cond} implies $\rank A = 2$.
Hence, if $\rank A=3$ then neither \eqref{E:conservedQ} nor \eqref{E:conservedQ2} is not a conserved quantity.
This is one difficulty of the case $\rank A=3$.
\end{remark}

\section{Proof of Theorem \ref{T:main4}}\label{S:main4}

Now let us restrict ourselves to the case
\[
	\l_2=\l_3=\l_4=\l_5=\l_7=\l_8=0
\]
and either
\[
	\l_6 = 3\l_1 \quad \text{ or } \quad \l_6= \l_1.
\]
Then, the system \eqref{E:sys} becomes
\begin{equation}\tag{\ref{E:sysnew}}
\left\{
\begin{aligned}
&(\square + 1)u_1 
= \l_1 u_1^3, \\
&(\square + 1)u_2 
= \l_6 u_1^2u_2.
\end{aligned}
\right.
\end{equation}

\subsection{Global existence and global bounds}

We first consider the global existence of a solution.
Recall that the first equation of the system \eqref{E:sysnew} is nothing but the single equation \eqref{E:cNLKG}.
Hence, the small data global existence is well-known (see e.g. \cite{Del,HN08ZAMP,LS,Sti}).
Once we know the global existence of $u_1$, it is straightforward to see that $u_2$ also exists globally
since the equation for $u_2$ is linear with respect to $u_2$.

The main concern of the present subsection is global bounds on the solution.
In particular, we control growth of Sobolev norms of solutions.

\subsubsection{Estimate of the first component}

Let us begin with the estimate of the first component of \eqref{eq:2.29} corresponding to  \eqref{E:sysnew}.
In the present paper, we use the following version of the global bound result.
\begin{proposition}
\label{P:ap} Let $\kappa \in \R_+ \setminus \frac12 \Z$
and let\footnote{
$\lfloor x \rfloor = \max \{ z \in \Z \ |\ z \le x \}$ is the floor function.}
 $M=\lfloor2\kappa\rfloor+1$. 
 Let $L \ge 2$ be an integer.
For any $\delta \in (0,1/5)$ there exists $\eta_{1}>0$ such that 
if $\eta:=\|U_{1,0}\|_{H_z^{L+M+1}}+\|U_{1,1}\|_{H_z^{L+M}}\le\eta_{1}$, 
then there exists a unique global solution $U_1$ to (the first component of) \eqref{eq:2.29} 
with $U_{1}(\tau_0,z)=U_{1,0}(z), \pt_{\tau}U_{1}(\tau_0,z)=U_{1,1}(z)$ satisfying
$U_1\in C([\tau_0,\infty),H_z^{L+M+1}(\R))\cap C^1([\tau_0,\infty),H_z^{L+M}(\R))$ 
and estimates
\begin{equation}
\| \pt_{\tau}^{k_1} \pt_{z}^{k_2} U_1(\tau,\cdot)\|_{L^2_z(\R)}
\lesssim_\delta \eta \tau^{\tilde{\delta}(k_2)}  \label{ve}
\end{equation}
for any $k_1,k_2\ge0$ with
$0\le k_1+k_2\le L+M+1$ and $k_2 \le M+L$,
\[
\| \pt_{z}^{L+M+1} U_1(\tau,\cdot)\|_{L^2_z(\R)}
\lesssim_\delta \eta \tau^{\tilde{\delta}(M+L+1) + \frac12},
\]
 and
\begin{equation}\label{ve1.5}
	\sup_{\tau \ge \tau_0}\norm{ U_1 (\tau,\cdot) }_{L^\I_z(\R)} \lesssim_\kappa \delta^{\frac12},
\end{equation}
 where
\[
	\tilde{\delta}(k) := \delta + \frac12\max ( k-L ,0).
\]
\end{proposition}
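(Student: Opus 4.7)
The plan is a bootstrap argument in the hyperbolic variables combined with the profile (normal form) analysis classical for cubic nonlinear Klein-Gordon equations in one space dimension, going back to Delort. Writing the first component of \eqref{eq:2.29} as
\begin{equation*}
\partial_\tau^2 U_1 + U_1 = \frac{\lambda_1 U_1^3}{\tau (\cosh \kappa z)^2} + \frac{1}{\tau^2} P U_1,
\end{equation*}
local existence on a maximal interval $[\tau_0, T^\ast)$ at the required Sobolev regularity is standard. I would prove $T^\ast = \infty$ and the quantitative bounds simultaneously: assume on any $[\tau_0, T] \subset [\tau_0, T^\ast)$ that the estimates of the statement hold with the implicit constant replaced by $2C_0$ for some large $C_0 \ge 1$ fixed in advance, and improve them to $C_0$. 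The smallness $\eta \le \eta_1 = \eta_1(\delta, \kappa)$ will initiate and close this bootstrap.

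For the $L^2$-based Sobolev bounds I would run energy estimates on
\begin{equation*}
E_k(\tau) := \tfrac12 \|\partial_\tau \partial_z^k U_1\|_{L^2_z}^2 + \tfrac12 \|\partial_z^k U_1\|_{L^2_z}^2, \qquad 0 \le k \le L+M+1.
\end{equation*}
Applying $\partial_z^k$ to the equation, multiplying by $\partial_\tau \partial_z^k U_1$ and integrating in $z$ yields
\begin{equation*}
\frac{d}{d\tau} E_k \lesssim \frac{\|U_1\|_{L^\infty_z}^2}{\tau}\, E_k + \frac{1}{\tau^2}\, E_k^{1/2}\, E_{k+2}^{1/2} + (\text{lower-order commutators}),
\end{equation*}
where the factor $E_{k+2}^{1/2}$ reflects that $P$ from \eqref{eq:2.28} contains $\partial_z^2$ with bounded coefficients. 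The bootstrap pointwise bound gives $\|U_1\|_{L^\infty_z}^2 \lesssim \delta$, so Grönwall handles the cubic contribution with an acceptable factor $\tau^{O(\delta)}$. Plugging the ansatz $E_k^{1/2} \le A_k\, \eta\, \tau^{\tilde\delta(k)}$ into the inequality, the identity $\tilde\delta(k+2) - \tilde\delta(k) = 1$ (valid for $k \ge L$) makes $\int^\tau s^{-2}\, s^{\tilde\delta(k+2)}\, ds \sim \tau^{\tilde\delta(k+2)-1} = \tau^{\tilde\delta(k)}$, so the hierarchy closes self-consistently. For $k \le L$ the second term produces only $\tau^{O(\delta)}$ growth, giving $\tilde\delta(k) = \delta$. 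At the top level $k = L+M+1$ the argument is unavailable because $E_{L+M+3}$ is not controlled; one therefore runs the top-level energy estimate with an extra half-derivative loss (e.g.\ a direct estimate $\|P\partial_z^{L+M+1}U_1\|_{L^2} \lesssim \tau^{1/2}$ via the equation), producing the additional $\tau^{1/2}$.

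For the pointwise bound I would introduce the profile $\alpha_1 := (U_1 - i \partial_\tau U_1)e^{-i\tau}/2$, which by the derivation in Subsection \ref{subsec:limitODEsys} (specialized so that only $\lambda_1$ is nonzero and $\alpha_2 \equiv 0$) satisfies
\begin{equation*}
2 i \partial_\tau \alpha_1 = \frac{3 \lambda_1}{\tau (\cosh \kappa z)^2} |\alpha_1|^2 \alpha_1 + \frac{1}{\tau}\, N_{1,\text{nr}}(\alpha_1, 0) + \frac{e^{-i\tau}}{\tau^2}\, P U_1.
\end{equation*}
Taking $\Re(\overline{\alpha_1}\, \cdot\,)$ annihilates the resonant cubic, leaving
\begin{equation*}
\partial_\tau |\alpha_1|^2 = \frac{1}{\tau}\,\Im\bigl(\overline{\alpha_1}\, N_{1,\text{nr}}\bigr) + O\bigl(\tau^{-2}\, |\alpha_1|\, \|P U_1\|_{L^\infty_z}\bigr).
\end{equation*}
The non-resonant term carries oscillatory factors $e^{\pm 2 i \tau}, e^{-4 i \tau}$; a single integration by parts in $\tau$ converts its contribution into an integrable $\tau^{-2}$ remainder plus a boundary piece of size $O(\eta^4/\tau)$. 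The $P U_1$ term is controlled by the Sobolev embedding $H^1_z \hookrightarrow L^\infty_z$ together with the energy bound at level $L+M$, yielding a contribution integrable against $\tau^{-2}$ thanks to the assumption $\delta < 1/5$ and $M \ge 1$. Consequently $\|\alpha_1(\tau)\|_{L^\infty_z} \le \|\alpha_1(\tau_0)\|_{L^\infty_z} + C\eta^2$; choosing $\eta_1$ small enough that $C\eta_1 \le \delta^{1/2}/(2C_0)$ forces $\|U_1\|_{L^\infty_z} \le 2\|\alpha_1\|_{L^\infty_z} \le C_0 \delta^{1/2}/2$ and closes the pointwise bootstrap. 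The most delicate step, and the one that determines the statement's precise exponents, is the derivative-loss bookkeeping at the top of the energy hierarchy: each application of $P$ costs two $z$-derivatives but gains $\tau^{-2}$, and this exact balance is what produces the piecewise-linear $\tilde\delta(k)$ and the lone additional $\tau^{1/2}$ factor at level $L+M+1$.
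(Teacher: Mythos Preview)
Your pointwise/bootstrap argument via $|\alpha_1|^2$ is essentially the same as the paper's (the paper removes the resonant phase instead, but the effect is identical). The gap is in the Sobolev estimates. Your energy $E_k=\tfrac12(\|\partial_\tau\partial_z^kU_1\|_{L^2}^2+\|\partial_z^kU_1\|_{L^2}^2)$ treats the whole of $\tau^{-2}PU_1$ as a source term, and since $P$ contains $\partial_z^2$ this forces the loss $\tau^{-2}E_k^{1/2}E_{k+2}^{1/2}$. The hierarchy is then strictly \emph{upward}: closing $E_k$ needs $E_{k+2}$, and there is no honest way to start it at the top. Your proposed fix (``a direct estimate $\|P\partial_z^{L+M+1}U_1\|_{L^2}\lesssim\tau^{1/2}$ via the equation'') does not work, because the equation itself contains $\tau^{-2}\partial_z^2$ and gives no control of $\partial_z^{L+M+3}U_1$.

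The paper avoids this by using the genuine hyperbolic energy
\[
E_m[U](\tau)=\tfrac12\sum_{j=0}^m\Bigl(\|\partial_z^j\partial_\tau U\|_{L^2}^2+\tfrac1{\tau^2}\|\partial_z^{j+1}U\|_{L^2}^2+\|\partial_z^jU\|_{L^2}^2\Bigr),
\]
so that the $\partial_z^2$ in $P$ is absorbed into $\frac{d}{d\tau}E_m$ and only the first-order part of $P$ survives as a source. That remainder can be bounded in two ways, yielding
\[
\frac{d E_m}{d\tau}\le\Bigl(\frac{C_0}{\tau}\|U_1\|_{L^\infty}^2+\frac{C_1}{\tau^2}\Bigr)E_m+2\kappa\min\Bigl(\frac1\tau E_m,\ \frac1{\tau^2}E_{m+1}\Bigr).
\]
At the top level $m=L+M$ one uses the first option, which closes at that single level and gives $E_{L+M}\lesssim\eta^2\tau^{2\delta+2\kappa}$; one then inducts \emph{downward} using the second option, shaving one power of $\tau$ per step until the exponent reaches $2\delta$ at level $L$. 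This is what produces the piecewise-linear $\tilde\delta$, and since $M-1<2\kappa<M$ the descent lands exactly there. Finally, the extra $\tau^{1/2}$ at order $L+M+1$ is not a separate top-level argument at all: it drops out of $\|\partial_z^{L+M+1}U_1\|_{L^2}\le\tau\,E_{L+M}^{1/2}$, which is built into the definition of $E_{L+M}$.
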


\begin{remark}
As mentioned above, we will need $\kappa>7/2$.
Hence, we choose $\kappa=701/200$ in our theorem, which implies $M=8$.
For the choice of $\kappa$, if $\|u_{1,0}\|_{H^{L+9}}+
\|u_{1,1}\|_{H^{L+8}}$ is sufficiently small, 
then Proposition \ref{P:hyp} ensures that the assumption of Proposition \ref{P:ap} is fulfilled.
In this subsection, we consider general $\kappa$.
When $2\kappa \in \Z$, we have a similar result with a modification.
\end{remark}

\begin{proof}[Proof of Proposition \ref{P:ap}]
We briefly outline the proof.  Let $\tau_0=1$, for simplicity.
We employ the energy estimates used by Delort, Fang and Xue \cite{DFX}. 

Let 
\[
	E_m[U](\tau) := \frac12 \sum_{j=0}^m \( \norm{\pt_z^j \pt_\tau U}_{L^2_z(\R)}^2+ \frac1{\tau^2}\norm{\pt_z^{j+1} U}_{L^2_z(\R)}^2 +  \norm{\pt_z^j U}_{L^2_z(\R)}^2 \).
\]
By a standard energy argument, one has
\[
	\frac{\pt E_m[U_1]}{\pt \tau} (\tau)\le \( \frac{C_0}{\tau} \norm{U_1(\tau)}_{L^\I_z(\R)}^2 + \frac{C_1}{\tau^2}\) E_m[U_1](\tau) + 2\kappa \min \( \frac1{\tau} E_{m}[U_1](\tau), \frac1{\tau^2}E_{m+1}[U_1](\tau) \)
\]
for $0\le m \le L+M$ with positive constants $C_0(\tau, L)\ge1$ and $ C_1(\tau, L)$.

Let $T>1$.
Suppose that $U_1(\tau)$ can be extended upto $\tau=T$ and that
\begin{equation}\label{ve3}
	C_0 \norm{U_1(\tau)}_{L^\I_z(\R)}^2 \le 2\delta
\end{equation}
is true for $1 \le \tau \le T$. Applying the Gronwall inequality to
the above estimate with $m=L+M$, one obtains
\[
	E_{L+M} [U_1] (\tau) \le E_{L+M} [U_1](1) e^{C_1(1-\tau^{-1})}  \tau^{2\delta+ 2\kappa} \lesssim 
	\eta^2 \tau^{2\delta+ 2\kappa}
\]
for $1 \le \tau \le T$.
Then, by induction on $n$, we obtain
\[
	E_{L+M-n}[U_1] (\tau) \le C_3(\kappa,n) E_{L+M-n}[U_1] (1) e^{C_1(1-\tau^{-1})}  \tau^{2\delta + 2\kappa -n}
	\lesssim \eta^2 \tau^{2\delta + 2\kappa -n}
\]
for $1 \le \tau \le T$ and $0\le n \le\lfloor2\kappa\rfloor=M-1$.
Similarly,
\[
	E_{L}[U_1] (\tau) \le C_4(\kappa,n) E_{L}[U_1] (1) e^{C_1(1-\tau^{-1})}  \tau^{2\delta}\lesssim \eta^2 \tau^{2\delta}.
\]
By definition of $E_m$, this implies that
\begin{equation}
\|\pt_{\tau}^{k}\pt_{z}^{\ell}U_1(\tau,\cdot)\|_{L_z^{2}}
\lesssim_{M,L} \eta \tau^{\tilde{\delta}(\ell)} \label{ve2}
\end{equation}
for any $0\le k\le1$, $0\le \ell\le L+M$, and $1 \le \tau \le T$. Note that $M-1<2\kappa<M$.

Let $\alpha_1$ be as in \eqref{eq:2.30} and
$\tilde{\alpha}_1(\tau) := \exp( i\frac{3\lambda_1}{2 (\cosh \kappa z)^2} \int_1^\tau |\alpha_1|^2 ds )  \alpha_1(\tau)$.
The estimate \eqref{ve2} gives us
\[
	 \norm{ PU_1 }_{L^\I_z(\R)} \lesssim  \norm{U_1}_{H^2_z(\R)}^{\frac12}\norm{U_1}_{H^{3}_z(\R)}^{\frac12}
	\lesssim \eta \tau^{\delta+\frac14}.
\]
Remark that the right hand side can be $\eta \tau^\delta$ if $L\ge3$.
By the equation \eqref{eq:2.34}, we have
\[
	2i \pt_\tau \tilde{\alpha}_1 = \frac1\tau N_{1,\mathrm{nr}}(\alpha_1) e^{  i\frac{3\lambda_1}{2 (\cosh \kappa z)^2} \int_1^\tau |\alpha_1|^2 ds } +O(\eta\tau^{-\frac74+\delta})\IN L^\I_z(\R)
\]
for $1 \le \tau \le T$. By integration-by-parts and \eqref{ve2}, the first term of the right hand side is
estimated as
\[
	\norm{\int_1^\tau \frac1s N_{1,\mathrm{nr}}(\alpha_1) e^{  i\frac{3\lambda_1}{2 (\cosh \kappa z)^2} \int_1^s |\alpha_1|^2 d\sigma } ds}_{L^\I_z(\R)} 
	\lesssim \eta^3\tau^{-2+5\delta}.
\]
Combining these estimates, we have
\begin{equation}\label{ve4}
	C_0 \norm{U_1(\tau)}_{L^\I_z (\R)}^2 \le 4C_0 \norm{\tilde{\alpha}_1(\tau)}_{L^\I_z (\R)}^2 \le \delta
\end{equation}
for $1\le t \le T$ if $\eta_1$ is sufficiently small. 

In summary, we have proved in the previous two paragraphs that \eqref{ve3} implies \eqref{ve4} if $\eta_1 $ is sufficiently small.
By a standard argument, this implies that $U_1$ exists in $[1,\I)$ and
\eqref{ve2} and \eqref{ve4} are valid for any $\tau \ge 1$.
The estimate \eqref{ve} follows if we apply \eqref{ve2} to \eqref{eq:2.29}.
\end{proof}

\subsubsection{Estimate of the second component}

Let us turn to the estimate of the second component of \eqref{eq:2.29} corresponding to  \eqref{E:sysnew}.

\begin{proposition}
\label{P:ap2} Let $\kappa \in \R_+ \setminus \frac12 \Z$
and let $M=\lfloor2\kappa\rfloor+1$.
Let $L \ge 2$ be an integer.
Pick $\delta \in (0,1/5)$ and let $\eta_1>0$ be the number given in Proposition \ref{P:ap}.
Suppose $\eta:=\|U_{1,0}\|_{H_z^{L+M+1}}+\|U_{1,1}\|_{H_z^{L+M}}\le\eta_{1}$
and let $U_1(\tau)$ be the solution to the first equation of \eqref{eq:2.29}.
There exists a unique global solution $U_2$ to the second equation of \eqref{eq:2.29} 
with $U_{2}(0,z)=U_{2,0}(z), \pt_{\tau}U_{2}(0,z)=U_{2,1}(z)$ satisfying
$U_2\in C([\tau_0,\infty),H_z^{M+L+1}(\R))\cap C^1([\tau_0,\infty),H_z^{M+L}(\R))$ 
and estimates
\begin{equation}
\| \pt_{\tau}^{k_1} \pt_{z}^{k_2} U_2(\tau,\cdot)\|_{L^2_z(\R)}
\lesssim_\delta \tau^{2\delta+\tilde{\delta}(k_2)}  \label{ve5}
\end{equation}
for any $k_1,k_2\ge0$ with
$0\le k_1+k_2\le L+M+1$ and $k_2\le L+M$, and
\begin{equation*}
\| \pt_{z}^{M+L+1} U_2(\tau,\cdot)\|_{L^2_z(\R)}
\lesssim_\delta \tau^{2\delta+\tilde{\delta}(M+L+1)+\frac12} ,
\end{equation*}
 where
$\tilde{\delta}(k)$ is the same one as in Proposition \ref{P:ap}.
\end{proposition}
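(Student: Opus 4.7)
The key observation is that because the second equation of \eqref{E:sysnew} is linear in $U_2$, once $U_1$ is determined globally by Proposition \ref{P:ap}, the second component of \eqref{eq:2.29} is an inhomogeneous linear equation in $U_2$:
\[
\pt_\tau^2 U_2 + U_2 = V(\tau,z)\, U_2 + \tfrac{1}{\tau^2} P U_2,
\qquad V(\tau,z) := \frac{\la_6 U_1(\tau,z)^2}{\tau(\cosh\ka z)^2}.
\]
By \eqref{ve1.5} the potential satisfies $\norm{V(\tau,\cdot)}_{L^\I_z(\R)} \lesssim \delta/\tau$, so the equation is non-singular with $L^\I_z$-bounded coefficients uniformly in $\tau\ge \tau_{0}$. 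Standard linear theory in the Sobolev scale then yields a unique global $U_2 \in C([\tau_0,\I),H_z^{M+L+1})\cap C^1([\tau_0,\I),H_z^{M+L})$; crucially no smallness on the $U_2$-data is required, which is why \eqref{E:ICprime} rather than \eqref{E:IC} is natural here.

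For the quantitative bounds I would run the energy scheme of \cite{DFX} used in the proof of Proposition \ref{P:ap}, now applied to this linear equation. Define
\[
E_m[U_2](\tau) := \tfrac12 \sum_{j=0}^m \( \norm{\pt_z^j \pt_\tau U_2}_{L^2_z(\R)}^2 + \tfrac{1}{\tau^2}\norm{\pt_z^{j+1} U_2}_{L^2_z(\R)}^2 + \norm{\pt_z^j U_2}_{L^2_z(\R)}^2 \).
\]
Differentiating $j$ times in $z$, pairing with $\pt_\tau\pt_z^j U_2$ and Leibniz-expanding $\pt_z^j(VU_2) = V\pt_z^j U_2 + \sum_{k=1}^j \binom{j}{k}(\pt_z^k V)(\pt_z^{j-k}U_2)$ produces
\[
\frac{d}{d\tau} E_m[U_2] \le \(\frac{C_0\norm{U_1}_{L^\I_z(\R)}^2}{\tau}+\frac{C_1}{\tau^2}\) E_m[U_2] + \frac{2\ka}{\tau}\min\(E_m[U_2],\tfrac{1}{\tau}E_{m+1}[U_2]\) + S_m(\tau),
\]
where $S_m(\tau)$ collects the commutator source: products of $\pt_z^\ell U_1$ with $\ell \le L+M$, which Proposition \ref{P:ap} bounds by $\eta\tau^{\tilde\delta(\ell)}$, times factors $\pt_z^{j-k}U_2$ controlled by $E_m[U_2]^{1/2}$.

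At the top level $m = L+M$, \eqref{ve1.5} gives $C_0\norm{U_1}_{L^\I_z(\R)}^2 \le C\delta$, and Gronwall together with an absorption of $S_{L+M}$ by $E_{L+M}^{1/2}$ yields $E_{L+M}[U_2](\tau) \lesssim \tau^{C\delta + 2\ka}$. The lower-order estimates are then obtained by downward induction on $n$, exactly as in Proposition \ref{P:ap}: at level $L+M-n$ the $\min$ is used with $\tau^{-1}E_{L+M-n+1}[U_2]$ bounded from the previous stage, shaving one power of $\tau$ at each step; this gives $E_{L+M-n}[U_2] \lesssim \tau^{C\delta + 2\ka - n}$ for $0 \le n \le M-1$ and finally $E_{L}[U_2] \lesssim \tau^{C\delta}$. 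Reading off each summand and choosing the small parameter $\eta_1$ in Proposition \ref{P:ap} so that $C\delta \le 4\delta$ recovers the claimed bound \eqref{ve5} with the exponent $\tau^{2\delta + \tilde\delta(k_2)}$; the $k_2 = L+M+1$ case follows from the $\tau^{-2}\norm{\pt_z^{j+1}U_2}^2$ summand in $E_m$.

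The main obstacle is the bookkeeping of the source $S_m$ and of the descent: the additive $\tau^{2\delta}$ appearing in the target exponent for $U_2$, beyond the exponent for $U_1$, is precisely the Gronwall contribution of the potential $V\sim\delta/\tau$, so the smallness threshold on $\eta$ (and hence on $\eps$) must be chosen so that the Gronwall-produced constant $C\delta$ does not accumulate beyond $4\delta$ across the $M-1$ descent steps. On the other hand, since the equation is already linear in $U_2$, no bootstrap on $\norm{U_2}_{L^\I_z(\R)}$ and no non-resonance integration-by-parts argument (cf.~\eqref{eq:2.35}) is required, which considerably simplifies the scheme compared with the proof of Proposition \ref{P:ap}.
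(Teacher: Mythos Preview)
Your overall plan is reasonable, but there is a genuine gap in the top-down descent that makes the argument, as written, fail to close. The issue is hidden in your description of the commutator source $S_m$.

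For the nonlinearity $U_1^2U_2$ the tame product estimate you need is
\[
\|\pt_z^j(U_1^2U_2)\|_{L^2}\ \lesssim\ \|U_1\|_{L^\infty}^2\|\pt_z^jU_2\|_{L^2}\ +\ \|U_1\|_{L^\infty}\|U_2\|_{L^\infty}\|\pt_z^jU_1\|_{L^2}.
\]
The first term is the one you kept; the second term forces $\|U_2\|_{L^\infty}$ into $S_m$. At the top level $m=L+M$ the only bound you have on $\|U_2\|_{L^\infty}$ is the trivial Sobolev bound $\|U_2\|_{L^\infty}\lesssim E_{L+M}[U_2]^{1/2}$, and plugging in $E_m[U_1]\lesssim\eta^2\tau^{2\tilde\delta(m)}$ from Proposition~\ref{P:ap} gives
\[
\frac{d}{d\tau}E_{L+M}[U_2]\ \le\ \Bigl(\frac{C\delta}{\tau}+\frac{2\kappa}{\tau}+C\eta^2\,\tau^{2\delta+M-1}\Bigr)E_{L+M}[U_2]+\cdots.
\]
Since $M\ge1$ the coefficient $\eta^2\tau^{2\delta+M-1}$ is \emph{not} integrable, so Gronwall produces a factor $\exp(c\,\eta^2\tau^{2\delta+M})$ rather than $\tau^{C\delta+2\kappa}$, and the descent never starts. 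No smallness of $\eta$ can repair this, because the obstruction is polynomial growth in $\tau$, not a constant.

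The paper resolves this by abandoning the level-by-level descent for $U_2$ and instead Gronwalling a single weighted sum
\[
\mathcal{E}(\tau)=\tau^{-\delta}E_0[U_2](\tau)+\sum_{m=1}^{M+L}\tau^{-4\delta-2\tilde\delta(m)}E_m[U_2](\tau).
\]
The point is that the dangerous source term at level $m$ is now bounded, via Sobolev, by the \emph{low-order} pieces $E_0[U_2]^{1/2}E_1[U_2]^{1/2}$ of $\mathcal{E}$, and the weight $\tau^{-4\delta-2\tilde\delta(m)}$ exactly cancels the growth of $E_m[U_1]$, leaving an integrable coefficient $\tau^{-1-\delta/2}$ in front of $\mathcal{E}$. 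Thus all levels must be treated simultaneously; the structure differs from the $U_1$ argument precisely because the commutator for $U_1^3$ involves only $\|U_1\|_{L^\infty}$, which is uniformly small by the bootstrap \eqref{ve3}, whereas for $U_1^2U_2$ it involves $\|U_2\|_{L^\infty}$, on which you have no a priori smallness.
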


\begin{proof}
We let $\tau_0=1$ for simplicity.
Put
\[
	\mathcal{E}(\tau):= \tau^{-\delta} E_0[U_2](\tau) + 
	\sum_{m=1}^{M+L} \tau^{-4\delta -2\tilde{\delta}(m)} E_m[U_2](\tau),
\]
where $E_m$ is the same one as in the proof of Proposition \ref{P:ap}.
It suffices to show that $\mathcal{E}(\tau)$ is bounded.

By a standard energy argument, one has
\begin{equation}\label{E:Pap2pf1}
	\frac{\pt  E_0 [U_2]}{\pt \tau}(\tau)
	\le \frac{C_0}{\tau}  \norm{U_1(\tau)}_{L^\I_z(\R)}^2 E_0[U_2](\tau) 
	+ \frac{C_2}{\tau^2} E_0[U_2](\tau) + \frac{2\kappa }{\tau^{2}}  E_{1}[U_2](\tau)
\end{equation}
and
\begin{equation}\label{E:Pap2pf2}
\begin{aligned}
	\frac{\pt  E_m [U_2]}{\pt \tau}(\tau)
	\le{}& \frac{3C_0}{2\tau}  \norm{U_1(\tau)}_{L^\I_z(\R)}^2 E_m[U_2](\tau)
	+ \frac{C_1}{\tau} \norm{U_2(\tau)}_{L^\I_z(\R)}^2 E_m[U_1](\tau) \\
	& {}+ \frac{C_2}{\tau^2} E_m[U_2](\tau) + 2\kappa E_m[U_2](\tau)^\frac12 \min \( \frac1\tau  E_m[U_2](\tau)^\frac12, \frac1{\tau^{2}}  E_{m+1}[U_2](\tau)^\frac12\) 
\end{aligned}
\end{equation}
for $1 \le m \le M+L$, where $C_j=C_j(\kappa,L)$ are positive constants.
We can take $C_0\ge1$ as the same constant as in \eqref{ve4}. 
Here, we have used the following estimate for the term involving derivatives of the nonlinearity:
\begin{align*}
	\norm{\pt_z^j \pt_\tau U_2}_{L^2} \norm{\pt_z^j (U_1^2 U_2)}_{L^2}
	&{}\lesssim_j \norm{\pt_z^j \pt_\tau U_2}_{L^2} (\norm{\pt_z^j U_1}_{L^2} \norm{U_1}_{L^\I}\norm{U_2}_{L^\I}
		+ \norm{U_1}_{L^\I}^2\norm{\pt_z^j U_2}_{L^2} )\\
	&{} \le (\norm{U_1}_{L^\I}E_j[U_2]^{\frac12} )(\norm{U_2}_{L^\I} E_j[U_1]^{\frac12})
	+ \norm{U_1}_{L^\I}^2 E_j[U_2]\\
	&{} \le  \tfrac32 \norm{U_1}_{L^\I}^2 E_j[U_2] + \tfrac12 \norm{U_2}_{L^\I}^2 E_j[U_1]
\end{align*}
for $j \ge 1$.

Note that  $\tilde{\delta}(1)=\delta$.
Plugging \eqref{ve4} to \eqref{E:Pap2pf1}, we have
\begin{equation}\label{E:Pap2pf3}
	\pt_\tau ( e^{C_2(\tau^{-1}-1)} \tau^{-\delta} E_0[U_2](\tau)) \le 2\kappa \tau^{-2+5\delta} e^{C_2(\tau^{-1}-1)} (\tau^{-6\delta}E_{1}[U_2](\tau)) \le 2\kappa \tau^{-2+5\delta} e^{C_2(\tau^{-1}-1)}\mathcal{E}(\tau).
\end{equation}
Plugging \eqref{ve}, \eqref{ve4}, and the Sobolev embedding to \eqref{E:Pap2pf2}, one obtains
\begin{equation}\label{E:Pap2pf4}
\begin{aligned}
	\frac{\pt  E_m [U_2]}{\pt \tau}(\tau)	
	\le{}& \frac{3\delta}{2\tau}  E_m[U_2](\tau)
	  + \frac{C_2}{\tau^2}  E_m[U_2](\tau) + C_3 \eta_1  \tau^{2\tilde{\delta}(m) -1 } E_0[U_2](\tau)^{\frac12}E_1[U_2](\tau)^{\frac12}\\
	& {}+ 2\kappa  E_m[U_2](\tau)^\frac12 \min \( \frac1\tau   E_m[U_2](\tau)^\frac12, \frac1{\tau^{2}}   E_{m+1}[U_2](\tau)^\frac12 \)  .
\end{aligned}
\end{equation}
For $1 \le m \le M+L-1$, we deduce from $\tilde{\delta}(m)\ge \tilde{\delta}(m+1) - \frac12$
that
\begin{equation}\label{E:Pap2pf5}
\begin{aligned}
	&\partial_\tau (e^{C_2(\tau^{-1}-1)}\tau^{-4\delta-2\tilde{\delta}(m)} E_{m}[U_2](\tau) )\\
	&{} \le C_3 \eta_1 \tau^{-1-\frac\delta2}e^{C_2(\tau^{-1}-1)} (\tau^{-\delta}E_0[U_2](\tau))^\frac12 (\tau^{-6\delta}E_1[U_2](\tau))^\frac12 \\
	& \quad{} +2\kappa \tau^{-\frac32}e^{C_2(\tau^{-1}-1)}(\tau^{-4\delta -2\tilde{\delta}(m)} E_{m}[U_2](\tau))^\frac12 (\tau^{-4\delta -2\tilde{\delta}(m+1)} E_{m+1}[U_2](\tau))^\frac12 \\
	&{}\le 	(C_3 \eta_1 \tau^{-1-\frac{\delta}2} + 2\kappa \tau^{-\frac32})e^{C_2(\tau^{-1}-1)} \mathcal{E}(\tau).
\end{aligned}
\end{equation}
We next use \eqref{E:Pap2pf4} with the choice $m=M+L$ to obtain
\begin{equation}\label{E:Pap2pf6}
\begin{aligned}
	&{}\partial_\tau (e^{C_2(\tau^{-1}-1)}\tau^{-(2\delta+2\tilde{\delta}(M+L))} E_{M+L}[U_2](\tau) ) \\
	&{}\le C_3 \eta_1 \tau^{-1-\frac\delta2}e^{C_2(\tau^{-1}-1)} (\tau^{-\delta}E_0[U_2](\tau))^\frac12 (\tau^{-6\delta}E_1[U_2](\tau))^\frac12 \\
	&{}\le C_3 \eta_1 \tau^{-1-\frac\delta2}e^{C_2(\tau^{-1}-1)} \mathcal{E}(\tau).
\end{aligned}
\end{equation}

Combining \eqref{E:Pap2pf3}, \eqref{E:Pap2pf5}, and \eqref{E:Pap2pf6},
we have
\[
	\pt_\tau (e^{C_2(\tau^{-1}-1)} \mathcal{E}(\tau)) \lesssim_{\kappa,L}
	(\tau^{-2+5\delta}+\eta_1 \tau^{-1-\frac\delta2}+\tau^{-\frac32}) e^{C_2(\tau^{-1}-1)} \mathcal{E}(\tau).
\]
By Gronwall's inequality, we conclude that
	$\sup_{\tau\ge1}\mathcal{E}(\tau) \lesssim \mathcal{E}(1)$.
\end{proof}

\subsubsection{Bounds on $\alpha_j$}

We next give estimates on solutions to the system \eqref{eq:2.34} which are valid for general nonlinearity.
Remark that better estimates are possible for specific systems.
\begin{lemma}
Suppose that Assumption \ref{A:init} holds.
Let $(U_1,U_2)$ be a solution to \eqref{eq:2.29} given by Propositions \ref{P:ap} and \eqref{P:ap2} with the choice $\kappa=701/200$ and $L\ge2$.
Define $\alpha_j$ by \eqref{eq:2.30}.
Then, the following estimates are true for $\tau\ge \tau_0$ and $k,\ell \in \Z_{\ge0}$:
\begin{equation}\label{E:alest}
  \tnorm{\alpha_1(\tau)}_{C^{\ell}_z (\R)}+ \tnorm{\alpha_2(\tau)}_{C^{\ell}_z (\R)} 
\lesssim \tau^{3\delta +  \max (0, \frac14 + \frac12(l-L))}
\end{equation}
if $ \ell \le L+6$,
\begin{equation}\label{E:remainderest}
 \norm{\pt_\tau^{k} e^{-i\tau} \frac{1}{\tau^{2}}PU_1}_{C^{\ell}_z(\R)}
+ \norm{\pt_\tau^{k} e^{-i\tau} \frac{1}{\tau^{2}}PU_2}_{C^\ell_z(\R)}
\le C\tau^{-2+3\delta +  \max (0, \frac54 + \frac12(\ell-L)) }
\end{equation}
if $k+\ell \le L+5$,
and
\begin{equation}\label{E:altauest}
\tnorm{\pt_\tau^{1+k} \alpha_1(\tau)}_{C^{\ell}_z (\R)} 
+\tnorm{\pt_\tau^{1+k} \alpha_2(\tau)}_{C^{\ell}_z (\R)} 
\le C\tau^{-1+9\delta}
\end{equation}
if $k+\ell \le L+5$ and $\ell \le L-1$.
\end{lemma}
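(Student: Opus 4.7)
The strategy is to derive the three estimates in the order they are stated, each building on the previous together with the Sobolev bounds of Propositions \ref{P:ap} and \ref{P:ap2}. Throughout I use the one-dimensional Sobolev embedding $\|f\|_{L^\infty_z} \lesssim \|f\|_{L^2_z}^{1/2}\|\partial_z f\|_{L^2_z}^{1/2}$ to pass from $L^2_z$ bounds to pointwise bounds. With $\kappa = 701/200$ we have $M = \lfloor 2\kappa\rfloor+1 = 8$, so the Sobolev estimates of those propositions extend up to $L+M+1 = L+9$ spatial derivatives.

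For \eqref{E:alest}, the definition \eqref{eq:2.30} expresses $\partial_z^\ell\alpha_j$ pointwise as a combination of $\partial_z^\ell U_j$ and $\partial_z^\ell\partial_\tau U_j$ times $e^{-i\tau}$. The 1D Sobolev embedding combined with \eqref{ve} (for $j=1$) and \eqref{ve5} (for $j=2$) yields a bound of the form $\tau^{(\tilde\delta(\ell)+\tilde\delta(\ell+1))/2}$ in the first case, and the same times $\tau^{2\delta}$ in the second. A direct computation of $(\tilde\delta(\ell)+\tilde\delta(\ell+1))/2$ in the three cases $\ell \le L-1$, $\ell = L$, and $\ell > L$ reproduces exactly the stated exponent $\max(0,\,1/4+(\ell-L)/2)$, while the $2\delta$ penalty for $U_2$ added to the $\delta$ from $U_1$ produces the $3\delta$ in the conclusion. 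The assumption $\ell \le L+6$ keeps $\ell+1 \le L+M = L+8$ within the good Sobolev range of both propositions.

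For \eqref{E:remainderest}, I use that $P$ is a second-order differential operator in $z$ with smooth bounded coefficients (involving $\tanh\kappa z$ and $(\cosh\kappa z)^{-2}$), that $P$ commutes with $\partial_\tau$, and that $|\partial_\tau^j(e^{-i\tau}\tau^{-2})|\lesssim\tau^{-2}$ uniformly in $j \ge 0$. Expanding by Leibniz reduces the problem to bounding $\|\partial_\tau^{k'}\partial_z^m U_i\|_{L^\infty_z}$ for $k' \le k$ and $m \le \ell+2$, which the 1D Sobolev embedding and \eqref{ve}, \eqref{ve5} convert into the claimed bound $\tau^{-2+3\delta+\max(0,\,5/4+(\ell-L)/2)}$. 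The hypothesis $k+\ell \le L+5$ ensures $k'+m+1 \le L+8 \le L+M$, so we remain in the range where the sharp exponents of Propositions \ref{P:ap} and \ref{P:ap2} apply.

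For \eqref{E:altauest}, I argue by induction on $k$. The base case $k=0$ is immediate from \eqref{eq:2.34}: its right-hand side is $\tau^{-1}$ times a homogeneous cubic in $\alpha_1,\alpha_2,\overline{\alpha}_1,\overline{\alpha}_2$ with bounded oscillating prefactors, plus the remainder $e^{-i\tau}\tau^{-2}PU_j$, so \eqref{E:alest} (noting that $\ell \le L-1$ gives $\alpha_i = O(\tau^{3\delta})$) and \eqref{E:remainderest} yield $\partial_\tau\alpha_j = O(\tau^{-1+9\delta})$ in $C^\ell_z$. For the inductive step I differentiate \eqref{eq:2.34} a further $k$ times in $\tau$ and expand by Leibniz. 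Each $\partial_\tau$ landing on $\tau^{-1}$ gives $\tau^{-1-j}$, each $\partial_\tau$ landing on a bounded oscillating factor is $O(1)$, and each $\partial_\tau$ landing on an $\alpha$ factor upgrades the $\tau^{3\delta}$ bound of \eqref{E:alest} to the $\tau^{-1+9\delta}$ inductive bound. The worst single term occurs when one $\alpha$ factor is fully differentiated and the other two are untouched, giving $\tau^{-1}\cdot\tau^{6\delta}\cdot\tau^{-1+9\delta} = \tau^{-2+15\delta}$, which is absorbed into $\tau^{-1+9\delta}$ once $\delta$ is taken sufficiently small; remainder contributions from $e^{-i\tau}\tau^{-2}PU_j$ are handled directly by \eqref{E:remainderest} and yield strictly faster decay. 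The main technical obstacle is precisely this bookkeeping: one must verify that among the many terms generated by the Leibniz expansion of $\partial_\tau^k$ applied to a trilinear form in $\alpha,\overline\alpha$, no combination of derivative placements breaks the uniform bound $\tau^{-1+9\delta}$, while simultaneously tracking the $\partial_z$-regularity so that every $\partial_z^{\ell'}\alpha$ encountered stays within the range $\ell' \le L-1$ where \eqref{E:alest} contributes only $\tau^{3\delta}$.
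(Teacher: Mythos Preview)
Your proof is correct. For \eqref{E:alest} and \eqref{E:remainderest} you do exactly what the paper does: pass from the $L^2_z$ bounds of Propositions~\ref{P:ap} and~\ref{P:ap2} to $L^\infty_z$ via the one-dimensional Sobolev inequality, and check the resulting exponents.

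For \eqref{E:altauest} you take a different route. The paper first proves a slight strengthening of \eqref{E:alest}, namely
\[
\tnorm{\partial_\tau^k \alpha_j(\tau)}_{C^{\ell}_z(\R)} \lesssim \tau^{3\delta+\max(0,\,\frac14+\frac12(\ell-L))}
\quad\text{for } k+\ell\le L+6,
\]
obtained directly from the formula $\alpha_j=\tfrac12(U_j-i\partial_\tau U_j)e^{-i\tau}$ and the Sobolev bounds (which control arbitrary $\tau$-derivatives of $U_j$). With this in hand, one simply applies $\partial_\tau^k$ to \eqref{eq:2.34} and bounds every $\partial_\tau^{j_i}\alpha$ factor by $\tau^{3\delta}$; the explicit $\tau^{-1}$ in front of the nonlinearity supplies the decay. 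No induction is needed. Your inductive argument arrives at the same conclusion but with more bookkeeping: you must verify at each step that the $(k',\ell')$ pairs arising from Leibniz in both $\tau$ and $z$ stay within the inductive range, and you must separately account for the term where all $\tau$-derivatives fall on the oscillatory factor (which already gives exactly $\tau^{-1+9\delta}$, so your phrase ``worst single term'' for $\tau^{-2+15\delta}$ is slightly misleading---that is the worst term \emph{among those using the inductive hypothesis}, while the true leading term comes from leaving all $\alpha$'s undifferentiated). The paper's approach trades this bookkeeping for the single observation that $\partial_\tau^k\alpha_j$ is a priori bounded by $\tau^{3\delta}$ regardless of $k$.
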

\begin{proof}
We have $M=\lfloor2\kappa\rfloor+1=8$.
The estimate \eqref{E:remainderest} is a consequence of \eqref{ve}, \eqref{ve5}, and the Sobolev embedding $\norm{f}_{C^\ell}
	\lesssim \norm{f}_{H^\ell}^\frac12  \norm{f}_{H^{\ell+1}}^\frac12 $. 
Remark that we have ``wasted'' one derivative in order to avoid the exceptional case $k_2=L+9$ of \eqref{ve}.

Similarly, for $k+\ell \le L+6$, 
\begin{equation}\label{E:alestprime}
  \tnorm{\pt_\tau^k \alpha_1(\tau)}_{C^{l}_z (\R)}+ \tnorm{\pt_\tau^k \alpha_2(\tau)}_{C^{l}_z (\R)} 
\lesssim \tau^{3\delta +  \max (0, \frac14 + \frac12(l-L))}
\end{equation}
follows from \eqref{eq:2.30}, \eqref{ve}, and \eqref{ve5}.
The estimate \eqref{E:alest} is a special case $k=0$ of \eqref{E:alestprime}.
Combining \eqref{E:alestprime} and \eqref{E:remainderest} and using the equation \eqref{eq:2.34}, we obtain \eqref{E:altauest}.
Remark that the leading order term is the nonlinear terms for small $\delta$.
\end{proof}

\subsection{Asymptotic behavior of $\alpha_1$ and $\alpha_2$}

We want to investigate large time behavior of solutions to \eqref{eq:2.34}.
In what follows, we always assume $\kappa=\frac{701}{200}$ and hence $M=8$.
In this subsection, we suppose $L\ge3$.

\begin{lemma}\label{L:alsingle}
There exist a function $\tilde{\alpha}_1\in C^{L-1}\cap W^{L-1,\I}(\R)$ and a positive constant $c$ such that 
\begin{eqnarray}
\alpha_1(\tau,z)=\tilde{\alpha}_1(z)
\exp\left(-i\frac{3\lambda_1}{2(\cosh \kappa z)^2}|
\tilde{\alpha}_1(z)|^2\log\tau\right)
+O\left(\tau^{-1+c\delta}\right)\label{E:thm4a1}
\end{eqnarray}
holds in $C^{L-3}_z(\R)$ as $\tau \to \infty$.
The same asymptotics holds in $C_z^{L-1}(\R)$ with the remainder $O(\tau^{-\frac14+c\delta})$ instead.
In particular, it holds that
\begin{equation}\label{E:alesta1}
	\norm{\pt_z^\ell \alpha_1(\tau)}_{L^\I_z (\R)} \lesssim (\log \tau)^\ell
\end{equation}
for $\tau\ge \tau_0$ and $0\le \ell \le L-1$. 
\end{lemma}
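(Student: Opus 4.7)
The plan is to follow the standard normal-form / integration-by-parts strategy used by Delort, Sunagawa, and the second author. Write the first equation of \eqref{eq:2.34} (with $\lambda_2=\lambda_3=\lambda_4=0$) as
\[
2i\pt_\tau \alpha_1 = \frac{3\lambda_1}{\tau(\cosh\kappa z)^2}|\alpha_1|^2\alpha_1 + \frac{1}{\tau}N_{1,\mathrm{nr}}(\alpha_1,\alpha_2) + e^{-i\tau}\tau^{-2}PU_1.
\]
The first step is to eliminate the resonant term. Define
\[
\beta_1(\tau,z) := \alpha_1(\tau,z)\exp\!\left(i\frac{3\lambda_1}{2(\cosh\kappa z)^2}\int_{\tau_0}^\tau |\alpha_1(s,z)|^2\,\frac{ds}{s}\right).
\]
A direct computation shows $\pt_\tau\beta_1$ equals only the non-resonant contribution $-\tfrac{i}{2\tau}N_{1,\mathrm{nr}}\,e^{i\phi}$ plus the remainder $-\tfrac{i}{2}e^{-i\tau}\tau^{-2}PU_1\,e^{i\phi}$, because the real amplitude prefactor cancels the resonant term exactly. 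Note $|\beta_1|=|\alpha_1|$, and the defining phase has $\pt_\tau\phi = O(1/\tau)$.

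Next I would show that $\beta_1(\tau,z)$ is Cauchy in $\tau$ in $L^\infty_z$. The remainder integrates directly using \eqref{E:remainderest}; it contributes $O(\tau_1^{-1+c\delta})$ for $\tau_2\ge\tau_1$. For the non-resonant term, each of the three pieces in $N_{1,\mathrm{nr}}$ carries an oscillating factor $e^{i\omega \tau}$ with $\omega\in\{2,-2,-4\}$. Integration by parts in $\tau$ transfers one $\pt_\tau$ onto the slowly varying amplitude $G_\omega(s,z)=\mathrm{polynomial}(\alpha_1,\bar\alpha_1)\,e^{i\phi}/s$, and the boundary terms are $O(\tau_1^{-1+c\delta})$ by \eqref{E:alest}, while the remaining integral is $O(\tau_1^{-1+c\delta})$ thanks to \eqref{E:altauest} combined with $\pt_s\phi=O(1/s)$. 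Hence there exists $\tilde\beta_1(z)\in L^\infty$ with $\beta_1(\tau,z)-\tilde\beta_1(z)=O(\tau^{-1+c\delta})$ uniformly in $z$. Setting $m_1^\infty(z):=|\tilde\beta_1(z)|^2$, the same argument yields $|\alpha_1(\tau,z)|^2 - m_1^\infty(z) = O(\tau^{-1+c\delta})$, so
\[
\int_{\tau_0}^\tau |\alpha_1(s,z)|^2\frac{ds}{s} = m_1^\infty(z)\log\tau + \Theta(z) + O(\tau^{-1+c\delta}),
\]
where $\Theta(z):=\int_{\tau_0}^\infty (|\alpha_1|^2-m_1^\infty)\frac{ds}{s} - m_1^\infty\log\tau_0$. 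Setting $\tilde\alpha_1(z):=\tilde\beta_1(z)\exp(-i\tfrac{3\lambda_1}{2(\cosh\kappa z)^2}\Theta(z))$ (whose modulus equals $|\tilde\beta_1|$), inverting the definition of $\beta_1$ gives the announced asymptotic \eqref{E:thm4a1} in $L^\infty_z$.

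To upgrade to $C^{L-3}_z$, I would differentiate the equation for $\beta_1$ in $z$ up to $L-3$ times and repeat the IBP argument. The bounds \eqref{E:alest}, \eqref{E:altauest}, and \eqref{E:remainderest} allow $\ell\le L-3$ derivatives to land on the amplitude while keeping two derivatives in reserve for the boundary and integration steps in IBP, yielding the rate $O(\tau^{-1+c\delta})$ in $C^{L-3}_z$; the weaker rate $O(\tau^{-1/4+c\delta})$ in $C^{L-1}_z$ is obtained by skipping IBP at the top derivatives and using only \eqref{E:alest} with $\ell=L-1$ together with interpolation. The remaining claim \eqref{E:alesta1} then follows from the established asymptotic formula: since $\tilde\alpha_1\in C^{L-1}$ with bounded derivatives and the exponential phase is $\tfrac{3\lambda_1}{2(\cosh\kappa z)^2}|\tilde\alpha_1(z)|^2\log\tau$, applying $\pt_z^\ell$ produces at most $\ell$ factors of $\log\tau$ from differentiating the exponent, and the remainder is bounded in $C^{L-3}$; one closes the loop between $\ell\le L-3$ (handled directly by the asymptotic) and $\ell=L-2,L-1$ by inducting on the equation for $\pt_z^\ell\alpha_1$ with Grönwall, using the already-established lower-order polylogarithmic bounds to control the source.

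The main obstacle is the bookkeeping of derivatives and decay rates in the IBP step: each integration by parts requires differentiating a product of $\alpha_j$'s and the phase factor $e^{i\phi}$, and the phase itself carries an integral of $|\alpha_1|^2$ whose derivatives in $z$ grow, a priori, at most polynomially (by \eqref{E:alest}) but in fact only polylogarithmically (by \eqref{E:alesta1}). Thus the proof is slightly circular, and one must either prove \eqref{E:alesta1} first by an independent Grönwall argument on $\pt_z^\ell\alpha_1$ before carrying out the IBP at high regularity, or carry out the two arguments jointly by induction on $\ell$.
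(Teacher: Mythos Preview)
Your approach is essentially the same as the paper's. Two clarifications will remove the complications you flag as obstacles.

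First, there is no circularity. The a~priori bounds \eqref{E:alest} already give $\|\pt_z^k|\alpha_1|^2\|_{L^\infty}\lesssim s^{c\delta}$ for $k\le L-1$, so $\|\pt_z^k\phi(\tau)\|_{L^\infty}\lesssim\int_{\tau_0}^\tau s^{c\delta-1}\,ds\lesssim\tau^{c\delta}$, and by Fa\`a di Bruno $\|\pt_z^k e^{i\phi}\|_{L^\infty}\lesssim\tau^{c\delta}$ as well. These $\tau^{c\delta}$ factors are absorbed into the final $O(\tau^{-1+c\delta})$, so the integration-by-parts step works directly in $C^{L-1}_z$ using only \eqref{E:alest} and \eqref{E:altauest}; the polylogarithmic refinement \eqref{E:alesta1} is a \emph{consequence}, not an input.

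Second, the reason the rate drops from $O(\tau^{-1+c\delta})$ in $C^{L-3}_z$ to $O(\tau^{-1/4+c\delta})$ in $C^{L-1}_z$ is not the IBP on the non-resonant term (which gives $O(\tau^{-1+c\delta})$ in both norms) but the remainder $e^{-i\tau}\tau^{-2}PU_1$: by \eqref{E:remainderest} this term is $O(\tau^{-2+3\delta})$ in $C^{L-3}_z$ but only $O(\tau^{-5/4+3\delta})$ in $C^{L-1}_z$, and direct integration produces the two rates. In particular the asymptotic formula already holds in $C^{L-1}_z$ (with the weaker remainder), so \eqref{E:alesta1} for all $0\le\ell\le L-1$ follows immediately by differentiating the main term---no separate induction or Gr\"onwall closure at $\ell=L-2,L-1$ is needed.
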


\begin{remark}
 \eqref{E:alesta1} is a refinement of
\eqref{E:alest} with respect to the growth order in $\tau$.
\end{remark}

\begin{proof}
The first equation of the system \eqref{eq:2.34} takes the form 
\begin{equation}
2i\pt_{\tau}\alpha_1(\tau,z)
=
\frac{3\lambda_1}{\tau(\cosh \kappa z)^2}
|\alpha_1(\tau,z)|^2\alpha_1(\tau,z)+\frac1\tau N_{1,\text{nr}}(\alpha_1)+O(\tau^{-\frac54+3\delta}),\label{a11}
\end{equation}
in $C_z^{L-1}(\R)$ with
\[
N_{1,\text{nr}}(\alpha_1)
=\frac{\lambda_1}{(\cosh \kappa z)^2}
\left\{\alpha_1(\tau,z)^3e^{2i\tau}+3|\alpha_1(\tau,z)|^2\overline{\alpha_1(\tau,z)}e^{-2i\tau}
+\overline{\alpha_1(\tau,z)}^3e^{-4i\tau}\right\},
\]
where we have used \eqref{E:remainderest} with $(k,l)=(0,L-1)$.
The equation \eqref{a11} reads as
\[
	2i\pt_{\tau} \( e^{i\frac{3\l_1}{2(\cosh \kappa z)^2} \int_{\tau_0}^\tau |\alpha_1(s,z)|^2 \frac{ds}s}  \alpha_1(\tau,z)\)
	= \frac1\tau N_{1,\text{nr}}(\alpha_1) e^{i\frac{3\l_1}{2(\cosh \kappa z)^2} \int_{\tau_0}^\tau |\alpha_1(s,z)|^2 \frac{ds}s} +O(\tau^{-\frac54+c\delta})
\]
in $C^{L-1}_z(\R)$ with $c=c(L)\ge1$.
By an integration by parts and using the estimates \eqref{E:alest} and \eqref{E:altauest}, we see
that 
\[
	\norm{ \int_\tau^\I \frac1s N_{1,\text{nr}}(\alpha_1(s)) e^{i\frac{3\l_1}{2(\cosh \kappa z)^2} \int_{\tau_0}^s |\alpha_1(\sigma)|^2 \frac{d\sigma}\sigma} ds}_{C^{L-1}_z(\R)} \lesssim \tau^{-1+c\delta}
\]
for some constant $c>0$.
Hence, if $\delta$ is chosen small enough then 
$e^{i\frac{3\l_1}{2} \int_{\tau_0}^\tau |\alpha_1(s,z)|^2 \frac{ds}s}  \alpha_1(\tau,z)$ is a Cauchy sequence in $C_z^{L-1}(\R)$
and so there exists a function $A \in C_z^{L-1} \cap W^{L-1,\I}_z (\R)$ such that 
\[
	e^{i\frac{3\l_1}{2(\cosh \kappa z)^2} \int_{\tau_0}^\tau |\alpha_1(s)|^2 \frac{ds}s}  \alpha_1(\tau) = A + O(\tau^{-\frac14+c\delta})
\]
in $C^{L-1}_z(\R)$ as $\tau\to\I$. 
This implies
\[
	|\alpha_1(\tau)|^2 = |A|^2 + O(\tau^{-\frac14+c\delta})
\]
in $C_z^{L-1}(\R)$
and hence
there exists a real-valued function $S(z) \in C^{L-1}_z \cap W^{L-1,\I}_z (\R)$ such that
\[
	\int_{\tau_0}^\tau |\alpha_1(s,z)|^2 \frac{ds}s = |A|^2 \log \tau + S(z) + O(\tau^{-\frac14+c\delta})
\]
in $C^{L-1}_z(\R)$.
Thus, \eqref{E:alesta1} holds by letting $\tilde{\alpha}_1(z) := e^{-i\frac{3\l_1}{2(\cosh \kappa z)^2}S(z)}A(z) \in C^{L-1}_z \cap W^{L-1,\I}_z (\R)$.
Remark that $|A|=|\tilde{\alpha}_1|$ holds.

Repeating the above argument with the estimate \eqref{E:remainderest} with a different choice of $l$, we have
\[
	e^{i\frac{3\l_1}{2(\cosh \kappa z)^2} \int_{\tau_0}^\tau |\alpha_1(s)|^2 \frac{ds}s}  \alpha_1(\tau) = A + O(\tau^{-1+c\delta})
\]
and
\[
	\int_{\tau_0}^\tau |\alpha_1(s,z)|^2 \frac{ds}s = |A|^2 \log \tau + S(z) + O(\tau^{-1+c\delta})
\]
in $C^{L-3}_z(\R)$ as $\tau\to\I$, from which
\eqref{E:thm4a1} follows. 
Further, \eqref{E:thm4a1} implies \eqref{E:alesta1}.
\end{proof}

Next, we consider the behavior of the second component.

\begin{lemma}
There exist a function $\tilde{\alpha}_2\in C^{L-1}_z \cap W^{L-1,\I}_z (\R)$ and a constant $c'>0$ such that 
\begin{equation}\label{E:thm4a2a}
\begin{aligned}
\alpha_2(\tau,z)
=&\left\{-i\frac{3\lambda_1}{2(\cosh \kappa z)^2}
(|\tilde\alpha_1(z)|^2\tilde{\alpha}_2(z)
+\tilde{\alpha}_1(z)^2\overline{\tilde{\alpha}}_2(z))\log\tau+\tilde{\alpha}_2(z)\right\}\\
&\quad \times
\exp\left(-i\frac{3\lambda_1}{2(\cosh \kappa z)^2}|
\tilde{\alpha}_1(z)|^2\log\tau\right)
+O\left(\tau^{-1+c'\delta}\right).
\end{aligned}
\end{equation}
holds if $\l_6 = 3\l_1$ and
\begin{equation}\label{E:thm4a2b}
\begin{aligned}
\alpha_2(\tau,z)
=&\left\{i\frac{\lambda_1}{2(\cosh \kappa z)^2}
(|\tilde\alpha_1(z)|^2\tilde{\alpha}_2(z)
-\tilde{\alpha}_1(z)^2\overline{\tilde{\alpha}}_2(z))\log\tau+\tilde{\alpha}_2(z)\right\}\\
 &\qquad\times
\exp\left(-i\frac{3\lambda_1}{2(\cosh \kappa z)^2}|
\tilde{\alpha}_1(z)|^2\log\tau\right)
+O\left(\tau^{-1+c'\delta}\right).
\end{aligned}
\end{equation}
holds if $\l_6= \l_1$
in $C^{L-3}_z(\R)$ for $\tau \ge \tau_0$.
The same asymptotics holds in $C_z^{L-1}(\R)$ with the remainder $O(\tau^{-\frac14+c'\delta})$ instead.
In particular,
\[
	\norm{\pt_z^\ell \alpha_2(\tau)}_{L^\I_z (\R)} \lesssim (\log \tau)^{\ell+1}
\]
for $\tau\ge \tau_0$ and $0\le \ell \le L-1$.
\end{lemma}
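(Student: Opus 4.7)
The plan is to refine the argument of Lemma \ref{L:alsingle} one step further: after removing the logarithmic phase of $\alpha_1$ from $\alpha_2$, the effective linear operator will turn out to be nilpotent of order two, which explains the appearance of the logarithmic \emph{amplitude} correction.

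First I set $\theta(\tau,z) := -\frac{3\lambda_1}{2(\cosh\kappa z)^2}|\tilde{\alpha}_1(z)|^2 \log\tau$ (the phase factor appearing in \eqref{E:thm4a1}) and introduce $\beta_2(\tau,z)$ by $\alpha_2 = \beta_2\, e^{i\theta}$. Substituting $\alpha_1 = \tilde{\alpha}_1 e^{i\theta} + O(\tau^{-1+c\delta})$ and $\partial_\tau \theta = -\frac{3\lambda_1 |\tilde{\alpha}_1|^2}{2\tau(\cosh\kappa z)^2}$ into the second line of \eqref{eq:2.34}, and using the vanishing coefficients of \eqref{E:sysnew}, I obtain
\begin{equation*}
\partial_\tau \beta_2 = \frac{1}{\tau}\, M(\beta_2) + R(\tau,z),
\end{equation*}
where $M \colon \C \to \C$ is the $\R$-linear map
\begin{equation*}
M(F) = \begin{cases}
-\dfrac{3i\lambda_1}{2(\cosh\kappa z)^2}\bigl(|\tilde{\alpha}_1|^2 F + \tilde{\alpha}_1^{\,2}\, \overline{F}\bigr), & \lambda_6 = 3\lambda_1,\\[2mm]
\dfrac{i\lambda_1}{2(\cosh\kappa z)^2}\bigl(|\tilde{\alpha}_1|^2 F - \tilde{\alpha}_1^{\,2}\, \overline{F}\bigr), & \lambda_6 = \lambda_1,
\end{cases}
\end{equation*}
and $R$ is the sum of the phase-approximation error $O(\tau^{-2+c\delta}|\beta_2|)$, the non-resonant oscillatory term $\tau^{-1}e^{-i\theta}N_{2,\mathrm{nr}}$, and the remainder $\tau^{-2}e^{-i\tau-i\theta}PU_2$, which is $O(\tau^{-2+c\delta})$ by \eqref{E:remainderest}. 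The key algebraic fact, verified directly using $|\tilde{\alpha}_1^{\,2}| = |\tilde{\alpha}_1|^2$, is that $M^2 = 0$ in both cases.

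Since $M^2 = 0$, the propagator of the linearized equation $\partial_\tau \beta = \tau^{-1}M(\beta)$ is simply $\Phi(\tau,s) = I + M\log(\tau/s)$, so I introduce $\gamma(\tau,z) := (I - M\log\tau)\beta_2(\tau,z)$, which satisfies
\begin{equation*}
\partial_\tau \gamma = (I - M\log\tau)\, R(\tau,z).
\end{equation*}
The non-resonant part of $R$ is made integrable in $\tau$ by an oscillatory integration by parts, exactly as in the proof of Lemma \ref{L:alsingle}; the remaining part satisfies $|R| \lesssim \tau^{-2+c'\delta}(1+|\beta_2|)$, and combined with the a priori estimate $|\beta_2| \lesssim (\log\tau)^{O(1)}$ (obtained by bootstrapping from Proposition \ref{P:ap2} via $M^2=0$), the product $(I - M\log\tau)R$ lies in $L^1_\tau(C^{L-3}_z(\R))$. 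Hence $\tilde{\alpha}_2(z) := \lim_{\tau \to \infty} \gamma(\tau,z)$ exists in $C^{L-3}_z$, belongs to $C^{L-1}_z \cap W^{L-1,\infty}_z(\R)$ by the higher-regularity version of the same estimate, and obeys $\gamma(\tau) - \tilde{\alpha}_2 = O(\tau^{-1+c'\delta})$ in $C^{L-3}_z$. Undoing the change of variable, $\beta_2 = (I + M\log\tau)\gamma$ and $\alpha_2 = \beta_2 e^{i\theta}$, yields \eqref{E:thm4a2a}--\eqref{E:thm4a2b}, since the coefficient $M(\tilde{\alpha}_2)$ of $\log\tau$ matches the expressions stated there. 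The weaker $C^{L-1}_z$ asymptotic with remainder $O(\tau^{-1/4+c'\delta})$ follows by invoking \eqref{E:remainderest} at the highest regularity level, and the pointwise bound $\|\partial_z^\ell \alpha_2\|_{L^\infty_z} \lesssim (\log\tau)^{\ell+1}$ is then immediate on differentiating the asymptotic expression $\ell$ times in view of \eqref{E:alesta1}.

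The main obstacle is keeping the bookkeeping consistent: here $\beta_2$ is only bounded by a power of $\log\tau$ (whereas the analogous quantity in Lemma \ref{L:alsingle} was $O(1)$), so the standard oscillatory integration by parts on the non-resonant term acquires extra logarithmic factors that must be absorbed into the decay rate. Verifying $M^2 = 0$ (a short algebraic check using $\tilde{\alpha}_1^{\,2}\overline{\tilde{\alpha}_1^{\,2}} = |\tilde{\alpha}_1|^4$) and carefully propagating these logarithmic amplitudes through the integration-by-parts estimate are the two delicate steps; everything else is parallel to Lemma \ref{L:alsingle}.
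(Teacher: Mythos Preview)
Your argument is correct but follows a genuinely different route from the paper's. The paper exploits the near-conservation of $2\Re(\overline{\alpha}_1\alpha_2)$ (respectively $2\Im(\overline{\alpha}_1\alpha_2)$) established in Subsection~\ref{subsec:conserved_quantity} (resp.~\ref{subsec:another_conservation}): it first shows this quantity has a limit $\tilde\beta(z)$, then uses the algebraic identity $\alpha_1^2\overline{\alpha}_2+2|\alpha_1|^2\alpha_2 = 2\alpha_1\Re(\overline{\alpha}_1\alpha_2)+|\alpha_1|^2\alpha_2$ to turn the resonant nonlinearity into a known source term $\tilde\alpha_1\tilde\beta\,e^{i\theta}$ plus a linear potential $|\tilde\alpha_1|^2\alpha_2$, and finally integrates and identifies $\tilde\beta=2\Re(\overline{\tilde\alpha}_1\tilde\alpha_2)$ a posteriori. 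Your approach instead removes the common phase and recognizes that the residual $\R$-linear operator $M$ is nilpotent of order two, which immediately yields the exact propagator $I+M\log\tau$; the coefficient $M(\tilde\alpha_2)$ of $\log\tau$ then reproduces the stated expressions directly, without the separate identification step. The two viewpoints are dual: the one-dimensional kernel of $M$ (which equals its image, by nilpotency) is precisely the direction corresponding to the paper's conserved quantity. Your version handles both cases $\lambda_6=3\lambda_1$ and $\lambda_6=\lambda_1$ by a single algebraic check, while the paper's version makes the link with the classification (Remark~\ref{R:Thm4conservation}) more transparent.

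One small slip: the a priori bound you need on $\beta_2$ is simply $|\beta_2|=|\alpha_2|\lesssim\tau^{3\delta}$, which comes straight from Proposition~\ref{P:ap2} via Sobolev embedding---no bootstrapping is required, and the claimed $(\log\tau)^{O(1)}$ bound is only available a posteriori. This does not affect the argument, since $\tau^{3\delta}$ is absorbed into the exponent $c'\delta$.
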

\begin{remark}\label{R:Thm4conservation}
When $\l_6=3\l_1$, $\ltrans{(0,1,0)}$ is a solution to \eqref{E:Aeq}. 
Hence, $\Re (\ol{\alpha_1} \alpha_2)$ is a conserved quantity for the corresponding limit ODE system.
Similarly, the condition \eqref{E:main3cond} is satisfied when $\l_6=\l_1$.
This implies that $\Im (\ol{\alpha_1} \alpha_2)$ is a conserved quantity for the ODE system in this case.
It will turn out that working with these two quantities is actually crucial in the proof.
\end{remark}

\begin{proof}
First consider the case $\l_6=3\l_1$.
In the proof, we denote various constants depend on $L$ by $c>0$.
The first equation of \eqref{eq:2.34} becomes \eqref{a11} and the second equation becomes
\begin{equation}\label{a21}
\begin{aligned}
2i\pt_{\tau}\alpha_2(\tau,z)
=&
\frac{3\lambda_1}{\tau(\cosh \kappa z)^2}
\left\{
\alpha_1(\tau,z)^2\overline{\alpha_2(\tau,z)}
+2|\alpha_1(\tau,z)|^2\alpha_2(\tau,z)\right\}\\
 &+\frac1\tau N_{2,\text{nr}}(\alpha_1,\alpha_2)+O(\tau^{-\frac54+3\delta})
\end{aligned}
\end{equation}
in $C^{L-1}_z(\R)$
with
\begin{align*}
N_{2,\text{nr}}(\alpha_1,\alpha_2)
=
\frac{3\lambda_1}{(\cosh \kappa z)^2}
(& \alpha_1(\tau,z)^2\alpha_2(\tau,z)e^{2i\tau}
+\overline{\alpha_1(\tau,z)}^2\alpha_2(\tau,z)e^{-2i\tau} \\
&+\overline{\alpha_1(\tau,z)}^2\overline{\alpha_2(\tau,z)}e^{-4i\tau}),
\end{align*}
respectively, where we have used \eqref{E:remainderest}.
We have
\[
\pt_{\tau}(2 \Re(\overline{\alpha}_1\alpha_2))
=
\frac1\tau \Im(N_{1,\text{nr}}(\alpha_1)\overline{\alpha}_2)
+\frac1\tau \Im(N_{2,\text{nr}}(\alpha_1,\alpha_2)\overline{\alpha}_1) + O(\tau^{-\frac54+6\delta})
\]
in $C^{L-1}_z(\R)$.
By integrating by parts and using \eqref{E:alesta1} and \eqref{E:altauest}, one has
\[
	\norm{ \int_\tau^\I \frac1s N_{1,\text{nr}}(\alpha_1(s))\ol{\alpha_2(s)}  ds}_{C^{L-1}_z(\R)}
	+ \norm{ \int_\tau^\I \frac1s N_{2,\text{nr}}(\alpha_1(s),\alpha_2(s))\ol{\alpha_1(s)}  ds}_{C^{L-1}_z(\R)}\lesssim \tau^{-1+c\delta}.
\]
Hence, there exists a real valued function 
$\tilde{\beta}\in C_z^{L-1}\cap W^{L-1,\I}_z(\R)$ such that 
\begin{equation}
2 \Re(\overline{\alpha}_1(\tau,z)\alpha_2(\tau,z))
=\tilde{\beta}(z)+O\left(\tau^{-\frac14+c\delta}\right)\label{a32}
\end{equation}
in $C^{L-1}_z(\R)$ as $\tau\to\I$.
Substituting (\ref{E:thm4a1}) and (\ref{a32}) into (\ref{a21}), we obtain 
\begin{align*}
&2i\pt_{\tau}\alpha_2(\tau,z)
-\frac{3\lambda_1}{\tau(\cosh \kappa z)^2}
|\tilde{\alpha}_1(z)|^2\alpha_2(\tau,z)\\
&{}=\frac{3\lambda_1}{\tau(\cosh \kappa z)^2}
\tilde{\alpha}_1(z)\tilde{\beta}(z)
\exp\left(-i\frac{3\lambda_1}{2(\cosh \kappa z)^2}|
\tilde{\alpha}_1(z)|^2\log\tau\right)\\
&\quad +\frac1\tau N_{2,\text{nr}}(\alpha_1,\alpha_2)+O\left(\tau^{-\frac54+c\delta}\right),
\end{align*}
in $C_z^{L-1}(\R)$ as $\tau\to\I$ with a suitable constant $c>0$, which reads as
\begin{align*}
&2i\pt_{\tau}
\left\{\alpha_2(\tau,z)
\exp\left(i\frac{3\lambda_1}{2(\cosh \kappa z)^2}|
\tilde{\alpha}_1(z)|^2\log\tau\right)+ i\frac{3\lambda_1}{2(\cosh \kappa z)^2}
\tilde{\alpha}_1(z)\tilde{\beta}(z)\log\tau \right\}\\
&=
\frac1\tau N_{2,\text{nr}}(\alpha_1,\alpha_2)
\exp\left(i\frac{3\lambda_1}{2(\cosh \kappa z)^2}|
\tilde{\alpha}_1(z)|^2\log\tau\right)
 +O\left(\tau^{-\frac54+c\delta}\right).
\end{align*}
By integration by parts, one has
\[
	\norm{ \int_\tau^\infty \frac1s N_{2,\text{nr}}(\alpha_1(s),\alpha_2(s))
\exp\left(i\frac{3\lambda_1}{2(\cosh \kappa z)^2}|
\tilde{\alpha}_1(z)|^2\log s \right)ds }_{C^{L-1}_z(\R)}
\lesssim \tau^{-1+c\delta}.
\]
Hence, there exists a function 
$\tilde{\alpha}_2\in C^{L-1}_z\cap W^{L-1,\I}_z(\R)$ such that 
\begin{eqnarray*}
\lefteqn{\alpha_2(\tau,z)\exp\left(i\frac{3\lambda_1}{2(\cosh \kappa z)^2}|
\tilde{\alpha}_1(z)|^2\log\tau\right)}\\
&=&\tilde{\alpha}_2(z)-i\frac{3\lambda_1}{2(\cosh \kappa z)^2}
\tilde{\alpha}_1(z)\tilde{\beta}(z)\log\tau
+O\left(\tau^{-\frac14+c\delta}\right)
\end{eqnarray*}
in $C^{L-1}_z (\R)$.
We have
\begin{align*}
\alpha_2(\tau,z)
={}&
\left\{-i\frac{3\lambda_1}{2(\cosh \kappa z)^2}
\tilde{\alpha}_1(z)\tilde{\beta}(z)\log\tau+\tilde{\alpha}_2(z)\right\}
\exp\left(-i\frac{3\lambda_1}{2(\cosh \kappa z)^2}|
\tilde{\alpha}_1(z)|^2\log\tau\right)\\
&+O\left(\tau^{-\frac14+c\delta}\right)
\end{align*}
in $C^{L-1}_z(\R)$ as $\tau\to\infty$.
Hence, the conclusion follows if we show the identity
\[
	\tilde{\beta}(z) = 2 \Re (\ol{\tilde{\alpha}_1(z)}\tilde{\alpha}_2(z)).
\]
One sees from the above asymptotics of $\alpha_2(\tau,z)$ and \eqref{E:thm4a1} that
\[
	\ol{\alpha_1(\tau,z)}\alpha_2(\tau,z)
	= -i\frac{3\lambda_1}{2(\cosh \kappa z)^2}
|\tilde{\alpha}_1(z)|^2 \tilde{\beta}(z)\log\tau+\ol{\tilde{\alpha}_1(z)}\tilde{\alpha}_2(z)
+O\left(\tau^{-\frac14+c\delta}\right).
\]
Comparing the real part with \eqref{a32}, one obtains the desired identity.
If we consider the asymptotics in $C_z^{L-3}(\R)$ then we gain $\tau^{-3/4}$ in the decay rate of the remainder term by the same argument.

The other case $\l_6=\l_1$ is handled in a similar way.
We have
\[
\pt_{\tau}(2 \Im(\overline{\alpha}_1\alpha_2))
=
\frac1\tau \Re(N_{1,\text{nr}}(\alpha_1)\overline{\alpha}_2)
-\frac1\tau \Re(N_{2,\text{nr}}(\alpha_1,\alpha_2)\overline{\alpha}_1) + O(\tau^{-\frac54+3\delta})
\]
and hence there exists a real-valued function $\tilde{\beta} \in C^{L-1}\cap W^{L-1,\I}(\R)$ such that
\[
2 \Im(\overline{\alpha}_1(\tau,z)\alpha_2(\tau,z))
=\tilde{\beta}(z)+O\left(\tau^{-\frac14+c\delta}\right).
\]
We use it instead of \eqref{a32}.
The remainder is order $O(\tau^{-1+C\delta})$ in $C^{L-3}_z(\R)$.
\end{proof}

\subsection{Asymptotic behavior of $u_1$ and $u_2$}

So far, we have obtained asymptotic behavior for $\alpha_1$ and $\alpha_2$ as $\tau\to\I$.
The final step is translate the asymptotics into those for $u_1$ and $u_2$.

Since we want to use the (real) variable $\sqrt{1-|\frac{x}t|^2}$ to express the asymptotic behavior of $(u_1,u_2)$,
we first consider the outside of the light cone, i.e., $\{(t,x) \in \R^2 \ |\  |x| > t \ge0 \}$, in which the variable is not well-defined.
In this region (and also at the boundary), the solution itself is small.

\begin{lemma}[Smallness outside of the light cone]\label{L:thm4outsidedecay}
It holds that
\[
	\sum_{j=1}^2 \norm{u_j(t,\cdot)}_{L^\I_x (\{|x|\ge t\})} = o( t^{-2} )
\]
as $t\to \I$. 
\end{lemma}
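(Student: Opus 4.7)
The plan is to exploit the representation \eqref{eq:2.27}, the exponential weight $\cosh \kappa z$ built into it, and the narrow geometry of the support of $(u_1,u_2)$ outside the light cone.

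First, I would use finite propagation speed (Proposition \ref{P:hyp}) to observe that $\supp u_j(t) \subset \{|x| \le t + B\}$, so on $\{|x| \ge t\}$ the solution is actually supported in the slab
\[
	\Omega_t := \{x \in \R \,|\, t \le |x| \le t+B\}
\]
of width $B$. On $\{|x| > t + B\}$ the bound is trivial because $u_j \equiv 0$. Hence it suffices to estimate $u_j(t,x)$ for $x \in \Omega_t$ and $t$ sufficiently large so that $\Omega_t \subset D$ (see \eqref{E:domain}).

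Next, I would control the geometric factors in the hyperbolic coordinates \eqref{E:hypcoord}. For $x \in \Omega_t$, a direct factoring gives $\tau^2 = (t+2B-|x|)(t+2B+|x|)$ with $B \le t+2B-|x| \le 2B$ and $2t+B \le t+2B+|x| \le 2t+3B$, so $\tau \sim t^{1/2}$ uniformly on $\Omega_t$. Consequently $\cosh z = (t+2B)/\tau \sim t^{1/2}$, and for $t$ large the variable $z$ is large, so $\cosh \kappa z \ge \tfrac12 e^{\kappa z} \sim (\cosh z)^{\kappa} \gtrsim t^{\kappa/2}$.

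Then I would bound $U_j$ in $L^\infty_z$ via the asymptotic representations already established. From Lemma \ref{L:alsingle} one has $\|\alpha_1(\tau)\|_{L^\infty_z} \lesssim 1$, and from the expansions \eqref{E:thm4a2a}--\eqref{E:thm4a2b} one has $\|\alpha_2(\tau)\|_{L^\infty_z} \lesssim \log \tau$. Since $U_j = 2\Re(\alpha_j e^{i\tau})$, combining with \eqref{eq:2.27} gives, for $x \in \Omega_t$,
\[
	|u_j(t,x)| \le \frac{|U_j(\tau,z)|}{\tau^{1/2} \cosh \kappa z} \lesssim \frac{\log \tau}{\tau^{1/2}\, \cosh \kappa z} \lesssim \frac{\log t}{t^{1/4 + \kappa/2}} .
\]
With the choice $\kappa = 701/200$, we have $1/4 + \kappa/2 = 801/400 > 2$, which leaves a small positive margin sufficient to absorb the logarithm and yields $|u_j(t,x)| = o(t^{-2})$ uniformly on $\Omega_t$.

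The main (and essentially only) delicate point is the quantitative match $1/4 + \kappa/2 > 2$, i.e.\ $\kappa > 7/2$. This is exactly the condition announced after \eqref{eq:2.27} and motivates the specific choice $\kappa = 701/200$; any $\kappa \le 7/2$ would fail to beat the $t^{-2}$ threshold, whereas the slight excess $\kappa - 7/2 = 1/200$ is what converts the bound $O(t^{-2})$ into the strict $o(t^{-2})$ once the logarithmic factor from $\|\alpha_2\|_{L^\infty_z}$ is taken into account.
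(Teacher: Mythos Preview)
Your proposal is correct and follows essentially the same route as the paper: finite propagation speed confines attention to $t\le|x|\le t+B$, there one checks $\tau\sim\cosh z\sim t^{1/2}$, and then the $L^\infty_z$ bounds on $\alpha_j$ (bounded for $j=1$, $O(\log\tau)$ for $j=2$) combined with \eqref{eq:2.27} yield the decay $t^{-(1+2\kappa)/4}\log t$, which is $o(t^{-2})$ precisely because $\kappa>7/2$. Your exponent $1/4+\kappa/2$ is exactly the paper's $(1+2\kappa)/4$.
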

\begin{proof}
Recall that the support of $u_j(t)$ is contained in $\{ |x| \le t + B \}$ for any $t\ge 0$.
One verifies that there exists $t_1 = t_1(B)$ such that if $t\ge t_1$ then the coordinate $(\tau(t,x),z(t,x))$ defined by
\eqref{E:hypcoord} satisfies
\[
	\tau \sim \cosh z \sim \sqrt{t}
\]
for any $t \le  |x| \le t+B$,
where the implicit constants depends only on $B$.
Hence, if $t\ge t_1$ then
\[
	\sum_{j=1}^2 \norm{u_j(t,\cdot)}_{L^\I (|x|\ge t)}
	\le \sup_{\tau \sim \sqrt{t},\, \cosh z \sim \sqrt{t} } \frac1{\tau^{1/2} \cosh (\kappa z)} \sum_{j=1}^2|\alpha_j(\tau,z)|
	\lesssim t^{-\frac{1+2\kappa}{4}}\log t
\]
follows from \eqref{E:thm4a1}, \eqref{E:thm4a2a}, and \eqref{E:thm4a2b}.
This is $o(t^{-2})$ since $\kappa>7/2$.
\end{proof}

Now, let us consider the behavior of inside of the light cone $\{|x|< t\}$.
The following is useful:
\begin{lemma}[\cite{Del}]
For $t> 0$ and $|x|< t$, 
\begin{align}
\left|\frac{1}{(t+2B)^{1/2}}-\frac1{t^{1/2}}\right|
&\le Ct^{-\frac32},\label{ab31}\\
\left|\sqrt{1-\left|\tfrac{x}{t+2B}\right|^{2}}-\sqrt{1-\left|\tfrac{x}{t}\right|^{2}}
-\frac{2B\left(\frac{x}{t}\right)^{2}}{t\sqrt{1-\left|\frac{x}{t}\right|^{2}}}\right|
&\le Ct^{-2}\left(1-\left|\tfrac{x}{t}\right|^{2}\right)^{-\frac32}.\label{ab41}
\end{align}
\end{lemma}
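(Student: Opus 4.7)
The plan is to derive both estimates from Taylor expansion in the time variable, treating $x$ as a frozen parameter. For \eqref{ab31}, I would apply the mean value theorem to the smooth function $g(s):=s^{-1/2}$ on $(0,\infty)$: since $g'(s)=-\tfrac12 s^{-3/2}$, one finds
$$
	\left| g(t+2B)-g(t)\right| = 2B\,|g'(\xi)| \le B\,\xi^{-3/2} \le B\,t^{-3/2}
$$
for some $\xi\in(t,t+2B)$, yielding \eqref{ab31} at once.

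For the more delicate \eqref{ab41}, I would introduce
$$
	f(s) := \sqrt{1 - (x/s)^2}, \qquad s>|x|,
$$
with $x$ such that $|x|<t$. A direct computation gives
$$
	f'(s) = \frac{x^2}{s^3 f(s)}, \qquad f''(s) = -\frac{3x^2}{s^4 f(s)} - \frac{x^4}{s^6 f(s)^3}.
$$
Because $2B f'(t)$ coincides precisely with the explicit correction term subtracted on the left-hand side of \eqref{ab41}, Taylor's theorem in its second-order Lagrange form reduces the entire estimate to a uniform bound on $f''(\xi)$ for $\xi\in[t,t+2B]$.

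The key observation is that $|x|/\xi \le |x|/t$ on this interval, so $f(\xi)\ge f(t)=\sqrt{1-(x/t)^2}$ and $x^2/\xi^2 \le 1$. Combining these with $\xi\ge t$ gives
$$
	|f''(\xi)| \le \frac{3}{t^2 \sqrt{1-(x/t)^2}} + \frac{1}{t^2 (1-(x/t)^2)^{3/2}} \le \frac{C}{t^2 (1-(x/t)^2)^{3/2}},
$$
and multiplying by $2B^2$ yields \eqref{ab41}. There is no genuine obstacle here; the only minor point is uniformity for all $t>0$, but for bounded $t$ the right-hand sides are already large and the inequalities hold trivially after enlarging $C$, so the substantive content lies entirely in the large-$t$ Taylor argument above.
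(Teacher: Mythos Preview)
Your proof is correct. The paper does not supply its own argument for this lemma; it simply cites the result from Delort~\cite{Del}. Your mean-value and second-order Lagrange remainder approach is exactly the natural elementary verification, and the identification $2Bf'(t)=\dfrac{2B(x/t)^2}{t\sqrt{1-(x/t)^2}}$ is the only computation that needs to be checked carefully, which you did correctly.

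One minor remark: the caveat about bounded $t$ is not actually needed. The mean value theorem already gives $\left|(t+2B)^{-1/2}-t^{-1/2}\right|\le B\,t^{-3/2}$ uniformly for all $t>0$, and in the second estimate the Lagrange remainder bound $|f''(\xi)|\le 4\,t^{-2}(1-(x/t)^2)^{-3/2}$ holds for every $t>|x|>0$ since $\xi\in(t,t+2B)$ automatically satisfies $\xi>|x|$. So the constant depends only on $B$ without any separate treatment of small $t$.
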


For $j=1,2$, define
\[
	A_j(y) := \(\(\frac{(\cosh z)^{1/2}}{\cosh (\kappa z)} \tilde{\alpha}_j(z)\) \circ \tanh^{-1} \) (y)
\]
for $-1<y<1$ and $A_j(y)=0$ for $|y|\ge 1$. Remark that $ A_j \in C^{L-1} \cap W^{L-1,\I}(\R)$ and
\begin{equation}\label{E:Adecay}
	\pt_y^k A_j(y) = O((1-|y|)^{\frac{\kappa}2-\frac14})
\end{equation}
as $|y| \to 1 $ for $0 \le k \le L-1$.

\begin{proof}[Proof of Theorem \ref{T:main4}]
Remark that $L=6$.
We see from \eqref{E:thm4a1} that
\begin{equation}
	u_1(t,x) = \frac{2}{t^{1/2}} \Re ( \tilde{\Phi}_1(y) \exp (it\sqrt{1-|y|^2}+i\Psi(y) \log t) )|_{y=\frac{x}{t+2B}}
	+ O(t^{-\frac32+c\delta})
\end{equation}
in $L^\I_x(\R)$ as $t\to\I$, where we have put
\[
	\tilde{\Phi}_1(y) := A_1(y) \exp \(2iB\sqrt{1-|y|^2} +i\Psi(y) \log \sqrt{1-|y|^2}\)
	\in C^2_y\cap W^{2,\I}_y(\R)
\]
and
\[
	\Psi(y) := -\frac{3\l_1}2\sqrt{1-|y|^2} |A_1(y)|^2 \in C^3_y\cap W^{3,\I}_y(\R)
\]
and used \eqref{ab31} and $\log \frac{t+2B}t = O(t^{-1})$.
Remark that the regularity  of $\tilde{\Phi}_1$ and $\Psi$ are determined by the choice of $\kappa$ (and $L$)
via \eqref{E:Adecay}.
One deduces from 
\begin{align}\label{E:Psi1decay}
	\tilde{\Phi}_1(y) ={}& O((1-|y|)^{\frac{\kappa}2-\frac14}),&
	\pt_y \tilde{\Phi}_1(y) ={}& O((1-|y|)^{\frac{\kappa}2-\frac34}), 
\end{align}
as $|y|\uparrow1$ that
\[
	\tilde{\Phi}_1(y) \exp\( i \Psi(y) \log t  \)|_{y=\frac{x}{t+2B}} 
	= \tilde{\Phi}_1(\tfrac{x}t) \exp ( i\Psi(\tfrac{x}t) \log t  ) + O(t^{-1}\log t)
\]
in $L^\I(|x|\le t)$ as $t\to\I$ provided $\kappa>5/2$ (see e.g. \cite{Del}). 
Further, \eqref{ab41} and \eqref{E:Psi1decay} yield
\begin{multline*}
	\tilde{\Phi}_1(\tfrac{x}t) \exp \( it\sqrt{1-|\tfrac{x}{t+2B}|^2} + i\Psi(\tfrac{x}t) \log t  \)\\
	= e^{i\frac{2B(\frac{x}t)^2}{(1-(\frac{x}t)^2)^{1/2}}}\tilde{\Phi}_1(\tfrac{x}t) \exp \( it\sqrt{1-|\tfrac{x}{t}|^2} + i\Psi(\tfrac{x}t) \log t  \)
	+ O(t^{-1})
\end{multline*}
as long as $\kappa>7/2$.
Setting
\[
	\Phi_1(y) := \tilde{\Phi}_1(y) \exp\( i\tfrac{2By^2}{(1-y^2)^{1/2}} \) \in (C^1_y \cap W^{1,\I}_y)(\R) \cap C_y^{5} (\R \setminus \{\pm1\})
\]
and combining the above estimates, we obtain the asymptotics for the first component $u_1$. Remark that
$|\Phi_1|^2 = |A_1|^2 \in C^5 \cap W^{5,\I}(\R)$.

Next, we consider the behavior of the second component.
When $\l_6=3\l_1$,
\eqref{E:thm4a1} and \eqref{E:thm4a2a} give us
\begin{equation*}
	\begin{aligned}
	&u_2(t,x) \\
	&{}= \frac{2}{t^{1/2}}\left. \Re \( \( \tilde{\Psi}(y) ( \log t +  \log \sqrt{1-|y|^2}) +\tilde{\Phi}_2 (y) \)	 \exp\(it\sqrt{1-|y|^2} +i\Psi(y) \log t \)  \)\right|_{y=\frac{x}{t+2B}}\\
	& \quad + O(t^{-\frac32+c\delta})
	\end{aligned}
\end{equation*}
in $L^\I_x(\R)$ as $t\to\I$, where we have put
\[
	\tilde{\Phi}_2(y) := A_2(y) \exp \(2iB\sqrt{1-|y|^2} +i\Psi(y) \log \sqrt{1-|y|^2}\)
	\in C^2_y\cap W^{2,\I}_y(\R)
\]
and
\begin{align*}
	\tilde{\Psi} (y) :={}& - \frac{3 \l_1}2 i \sqrt{1-|y|^2}
	(|\tilde{\Phi}_1(y)|^2\tilde{\Phi}_2(y) + \tilde{\Phi}_1(y)^2 \ol{\tilde{\Phi}_2(y)})\\
	={}& - \frac{3 \l_1}2 i \sqrt{1-|y|^2} (|A_1(y)|^2 A_2(y) + A_1(y)^2 \ol{A_2(y)})\\
	& \times  \exp \(2iB\sqrt{1-|y|^2} +i\Psi(y) \log \sqrt{1-|y|^2}\) 
	\in C^5_y\cap W^{5,\I}_y(\R)
\end{align*}
and used \eqref{ab31} and $\log \frac{t+2B}t = O(t^{-1})$.
Arguing as above, one obtains the desired estimate.

Similarly, when $\l_6=\l_1$,
\eqref{E:thm4a1} and \eqref{E:thm4a2b} give us the same estimate with
\[
	\tilde{\Psi} (y) := \frac{ \l_1}2 i \sqrt{1-|y|^2}
	(|\tilde{\Phi}_1(y)|^2\tilde{\Phi}_2(y) - \tilde{\Phi}_1(y)^2 \ol{\tilde{\Phi}_2(y)})
	\in C^2_y\cap W^{2,\I}_y(\R).\qedhere
\]
\end{proof}

\begin{remark}\label{R:lac}
The behavior of a solution is similar to that for \eqref{E:sysnewa} or \eqref{E:sysnewb}. 
Especially, the appearance of a logarithmic amplitude correction is common.
However, the mechanism of arising the amplitude correction is slightly different.
As for the present case, 
the limit ODE system \eqref{E:limitODE} becomes
\[
	\left\{
	\begin{aligned}
	&2i \pt_s \alpha_1 = 0, \\
	&2i \pt_s \alpha_2 = \l_5 |\alpha_1|^2 \alpha_1.
	\end{aligned}
	\right.
\]
Recall the correspondence $s=\log \sqrt{t^2-|x|^2} (= \log t + \log \sqrt{1-|x/t|^2})$.
Hence, the linear growth in $s$ of $\alpha_2$ corresponds to the appearance of logarithmic amplitude correction for $\alpha_2$.
Note that $|\alpha_1|^2 \alpha_1$ is constant in $s$ by the first equation.
Hence, the presence of a constant external term in the equation for $\alpha_2$
is the source of logarithmic amplitude correction.
On the other hand, as for the system \eqref{E:sysnewa} or \eqref{E:sysnewb}, the corresponding system \eqref{E:limitODE} takes the form
\[
	\left\{
	\begin{aligned}
	&2i \pt_s \alpha_1 = \lambda |\alpha_1|^2 \alpha_1 , \\
	&2i \pt_s \alpha_2 = \lambda |\alpha_1|^2 \alpha_2 + \l \beta \alpha_1  .
	\end{aligned}
	\right.
\]
Here, $\beta \in \R \cup i \R$ is a constant given by the corresponding conserved quantity of the system.
The both equations have the same factor $\lambda |\alpha_1|^2$, which produces a logarithmic phase modification.
The agreement of the phase modification is the heart of matter. Because of this, the term $\lambda \beta \alpha_1(s) $, which has the same oscillation as $\alpha_1(s)$, produces the linear growth in $s$.
The growth is nothing but the appearance of a logarithmic amplitude correction.
It can be phrased that the appearance in this case is due to a resonance phenomenon in the ODE theory.
\end{remark}

\section{Proof of Theorem \ref{T:main6}}\label{S:main6}

This section is devoted to the proof of Theorem \ref{T:main6}.

\begin{proof}
We first consider \eqref{E:newODE1}.
Denote two conserved quantities as
\[
	\beta_1:=|\alpha_1(s)|^2-|\alpha_2(s)|^2
\qquad \text{and}
\qquad
	\beta_2:=2 \Re(\overline{\alpha}_1(s)\alpha_2(s)).
\]
Plugging this to the ODE, one obtains
\[
\left\{
\begin{aligned}
	\frac{\pt \alpha_1}{\pt s} &{}= -\frac{i}{2} (3\beta_1 \alpha_1 -3 \beta_2 \alpha_2), \\
	\frac{\pt \alpha_2}{\pt s} &{}= -\frac{i}{2} (3\beta_1 \alpha_2 +3 \beta_2 \alpha_1).
\end{aligned}
\right.
\]
We define
\[
	\tilde{\alpha}_j (s) = \alpha_j(s) \exp \( \frac{3i}{2 } \beta_1 s \)
\]
for $j=1,2$. Then,
\[
	\frac{\pt }{\pt s}\tilde{\alpha}_1 = \frac{3i\beta_2}{2} \tilde{\alpha}_2,\quad
	\frac{\pt }{\pt s}\tilde{\alpha}_2 = -\frac{3i\beta_2}{2} \tilde{\alpha}_1.
\]
This reads as
\[
	\frac{\pt }{\pt s}\(\frac{\tilde{\alpha}_1\pm i \tilde{\alpha}_2}2\) = \pm \frac{3\beta_2}2  \(\frac{\tilde{\alpha}_1\pm i \tilde{\alpha}_2}2\).
\]
Hence,
\[
	\alpha_1 (s) = c_+ \exp \( \frac{-3i \beta_1 +3\beta_2}{2 }  s \)
	+ c_- \exp \( \frac{-3i \beta_1 -3\beta_2}{2 }  s \)
\]
and
\[
	\alpha_2 (s) = -i c_+ \exp \( \frac{-3i \beta_1 +3\beta_2}{2 }  s \)
	+i c_- \exp \( \frac{-3i \beta_1 -3\beta_2}{2 }  s \).
\]
One sees from this formula that
\[
	\beta_1 = |\alpha_1(s)|^2-|\alpha_2(s)|^2 = 4 \Re (c_+\ol{c_-})
\]
and
\[
	\beta_2 = 2 \Re (\ol{\alpha_1(s)}\alpha_2(s)) = 4 \Im (c_+\ol{c_-}).
\]
This shows$\beta_1 + i \beta_2 = 4 c_+ \ol{c_-}$ and $\beta_1 - i \beta_2 = 4\ol{c_+} c_-$.
Plugging this formulas to the above solution formula, we obtain the desired result.

Let us turn to \eqref{E:newODE2}. Denote two conserved quantities as
\[
	\beta_1:=|\alpha_1(s)|^2-|\alpha_2(s)|^2
\qquad \text{and}
\qquad
	\beta_2:=2 \Im(\overline{\alpha}_1(s)\alpha_2(s)).
\]
Plugging this to the ODE, one obtains
\[
\left\{
\begin{aligned}
	\frac{\pt \alpha_1}{\pt s} &{}= -\frac{i}{2} (3\beta_1 \alpha_1 -i \beta_2 \alpha_2), \\
	\frac{\pt \alpha_2}{\pt s} &{}= -\frac{i}{2} (3\beta_1 \alpha_2 -i \beta_2 \alpha_1).
\end{aligned}
\right.
\]
We define
\[
	\tilde{\alpha}_j (s) = \alpha_j(s) \exp \( \frac{3i}{2 } \beta_1 s \)
\]
for $j=1,2$. Then,
\[
	\frac{\pt }{\pt s}(\tilde{\alpha}_1\pm \tilde{\alpha}_2) = \mp \frac{\beta_2}{2} (\tilde{\alpha}_1\pm \tilde{\alpha}_2).
\]
This means
\[
	\frac{\pt \alpha_{\pm}}{\pt s}
	= 0,
\]
where
\[
	\alpha_{\pm} (s) = \frac12(\tilde{\alpha}_1\pm \tilde{\alpha}_2)(s)
	\exp \(\pm \frac{\beta_2}{2} s \).
\]
Hence, 
\[
	\alpha_{\pm} (s) = \alpha_{\pm} (0)=c_\pm'.
\]
Therefore, we conclude that
\begin{align*}
	\alpha_1(s) ={}& c_+' \exp \(- \frac{3i\beta_1+\beta_2}{2}   s \)
	 + c_-' \exp \( -\frac{3i\beta_1-\beta_2}{2} s  \)
\end{align*}
and
\begin{align*}
	\alpha_2(s) ={}& c_+' \exp \(- \frac{3i\beta_1(z)+\beta_2(z)}{2}  s  \)
	 - c_-' \exp \( -\frac{3i\beta_1(z)-\beta_2(z)}{2} s \)
.
\end{align*}
Notice that these two imply
\[
	 \beta_1 = |\alpha_1(s)|^2 - |\alpha_2(s)|^2 = 4 \Re (c_+' \ol{ c_-'}) 
\]
and
\[
	\beta_2 = 2 \Im (\ol{\alpha_1(s)} \alpha_2(s)) = 4\Im (c_+' \ol{c_-'}) .
\]
Hence,
\[
	3\beta_1 - i \beta_2 =  4(c_+' \ol{c_-'} + 2 \ol{c_+'} c_-) 
	\quad \text{and} \quad
	3\beta_1 + i \beta_2 = 4(\ol{c_+'} c_- + 2c_+ \ol{c_-}),
\]
which complete the proof.
\end{proof}

\subsection*{Acknowledgements} 
The authors express their gratitude to Professors Kenji Nakanishi and Hideaki Sunagawa for valuable
comments on a preliminary version of the manuscript.
S.M. was supported by JSPS KAKENHI Grant Numbers JP17K14219, JP17H02854, JP17H02851, and JP18KK0386. J.S was supported by JSPS KAKENHI Grant Numbers 
JP17H02851, JP19H05597, and JP20H00118.  K.U was supported by JSPS KAKENHI Grant Number 
JP19K14578.

\bibliographystyle{abbrv}

\bibliography{NLKG}

\end{document}